\newtheorem{remark}{Remark}
\newtheorem{lemma}[remark]{Lemma}
\newtheorem{theorem}[remark]{Theorem}
\newtheorem{proposition}[remark]{Proposition}
\newtheorem{corollary}[remark]{Corollary}
\title{The $k$-metric dimension of corona product graphs}
\author{A. Estrada-Moreno$^{(1)}$, I. G. Yero$^{(2)}$, J. A. Rodr\'{\i}guez-Vel\'{a}zquez$^{(1)}$\\
$^{(1)}${\small Departament d'Enginyeria Inform\`atica i Matem\`atiques,}\\
{\small Universitat Rovira i Virgili,}  {\small Av. Pa\"{\i}sos
Catalans 26, 43007 Tarragona, Spain.} \\{\small
 alejandro.estrada\@@urv.cat, juanalberto.rodriguez\@@urv.cat}\\
    $^{(2)}${\small Departamento de Matem\'aticas, Escuela Polit\'ecnica Superior de Algeciras}\\
{\small Universidad de C\'adiz,} {\small
Av. Ram\'on Puyol s/n, 11202 Algeciras, Spain.} \\ {\small
ismael.gonzalez\@@uca.es}
}
\date{}
\begin{document}
\maketitle

\begin{abstract}
Given a connected simple graph $G=(V,E)$, and a positive integer $k$, a set $S\subseteq V$ is said to be a $k$-metric generator for $G$ if and only if for any pair of different vertices $u,v\in V$, there exist at least $k$ vertices $w_1,w_2,...,w_k\in S$ such that $d_G(u,w_i)\ne d_G(v,w_i)$, for every $i\in \{1,...,k\}$, where $d_G(x,y)$ is the length of a shortest path between $x$ and $y$. A $k$-metric generator of minimum cardinality in $G$ is called a $k$-metric basis and its cardinality, the $k$-metric dimension of $G$. In this article we study the $k$-metric dimension of corona product graphs $G\odot\mathcal{H}$, where $G$ is a graph of order $n$ and $\mathcal{H}$ is a family of $n$ non-trivial graphs. Specifically, we give some necessary and sufficient conditions for the existence of a $k$-metric basis in a connected corona graph. Moreover, we obtain tight bounds and closed formulae for the $k$-metric dimension of connected corona graphs.
\end{abstract}

{\it Keywords:}  $k$-metric generator; $k$-metric dimension; $k$-metric basis; $k$-metric dimensional graphs; corona product graphs.

{\it AMS Subject Classification numbers:}   05C12; 05C76

\section{Introduction}

The concept of $k$-\textit{metric generator}  was introduced by the authors of this paper in \cite{Estrada-Moreno2013} as a generalization of the standard concept of metric generator. In graph theory, the notion of metric generator was previously given by Slater in \cite{Slater1975,Slater1988}, where the metric generators were called \emph{locating sets}, and also, independently by Harary and Melter in \cite{Harary1976}, where the metric generators were called \emph{resolving sets}. These characteristic sets were introduced in connection with the problem of uniquely determining the location of an intruder in a network. After that, several other applications of metric generators have been presented. For instance, applications to the navigation of robots in networks are discussed in \cite{Khuller1996}, and applications to chemistry are discussed in \cite{Johnson1993, Johnson1998}. Moreover, this issue has been studied in other papers including, for instance, \cite{Caceres2007, Chartrand2000, Haynes2006, Okamoto2010, Yero2011}.

For more realistic settings, $k$-metric generators allow to study a more general approach of locating problems. Consider, for instance, some robots which are navigating, moving from node to node of a network. Since on a graph there is not the concept of direction nor that of visibility, we assume that robots have communication with a set of landmarks $S$ (a subset of nodes), which provides them the distance to the landmarks in order to facilitate the navigation. In this sense, one aim is that each robot is uniquely determined by the landmarks. Suppose that in a specific moment there are two robots $x,y$, whose positions are only distinguished by one landmark $s\in S$. If the communication between $x$ and $s$ is ``unexpectedly blocked'', then the robot $x$ will get ``lost'' in the sense that it can assume that it has the position of $y$. So, for security reasons, we will consider a set of landmarks, where each pair of nodes is distinguished by at least $k\ge 2$ landmarks, {\em i.e.}, to take $S$ as a $k$-metric generator for $k\ge 2$.

Given a simple and connected graph $G=(V,E)$ we denote by $d_G(x ,y)$ the distance between $x,y\in V$. A  set $S\subset V$ is said to be a \emph{metric generator} for $G$ if for any pair of vertices  $x,y\in V$ there exists  $s\in S$ such that $d_G(s,x)\ne d_G(s,y)$ (in this case we say that the pair $x,y$ is  \textit{distinguished} by $s$).
 A \textit{minimum metric generator} is a metric generator with the smallest possible cardinality among all the metric generators for $G$.
A minimum metric generator is called a \emph{metric basis}, and its cardinality, the \emph{metric dimension} of $G$, denoted by $\dim(G)$. Given $S=\{s_{1}, s_{2}, \ldots, s_{d}\}\subseteq V(G)$, we refer to the $d$-vector (ordered $d$-tuple) $r(u|S)=\left(d_{G}(u,s_{1}),d_{G}(u,s_{2}),\ldots,d_{G}(u,s_{d})\right)$ as the \emph{metric representation} of $u$ with respect to $S$. In this sense, $S$ is a metric generator for $G$ if and only if for every pair of different vertices $u,v$ of $G$, it follows $r(u|S)\neq r(v|S)$.

Now, in a more general setting, given a positive integer $k$, a set $S\subseteq V$ is said to be a \emph{$k$-metric generator} for $G$ if and only if any pair of vertices of $G$ is distinguished by at least $k$ elements of $S$, {\em i.e.}, for any pair of different vertices $u,v\in V$, there exist at least $k$ vertices $w_1,w_2,...,w_k\in S$ such that
\begin{equation}\label{conditionDistinguish}
d_G(u,w_i)\ne d_G(v,w_i),\; \mbox{\rm for every}\; i\in \{1,...,k\}.
\end{equation}

Obviously, $1$-metric generators are the standard metric generators (resolving sets or locating sets as defined in \cite{Harary1976} or \cite{Slater1975}, respectively). By analogy to the standard case, a $k$-metric generator of minimum cardinality will be  called a \emph{$k$-metric basis} of $G$ and its cardinality, the \emph{$k$-metric dimension} of $G$, which will be denoted by $\dim_{k}(G)$. Notice that every $k$-metric generator $S$ satisfies that $|S|\geq k$ and, if $k>1$, then $S$ is also a $(k-1)$-metric generator.

In practice,  the problem of checking if a set $S$ is a $1$-metric generator is reduced to check condition (\ref{conditionDistinguish}) only for those vertices $u,v\in V- S$, as every vertex in $S$ is distinguished at least by itself. Also, if $k=2$, then condition (\ref{conditionDistinguish}) must be checked only for those pairs having at most one vertex in $S$, since two vertices of $S$ are distinguished at least by themselves. Nevertheless, if $k\ge 3$, then condition (\ref{conditionDistinguish}) must be checked for every pair of different vertices of the graph.

It was shown in \cite{Yero2013c}, that the problem of computing the $k$-metric dimension of a graph is NP-complete (the case $k=1$ was previously studied in \cite{Khuller1996}). It is therefore motivating to find the $k$-metric dimension for special classes of graphs or good bounds on this invariant. Specifically, for the case of product graphs, it would be desirable to reduce the problem of computing the $k$-metric dimension of a product graph into computing the $k$-metric dimension of the factor graphs.

Studies about the metric dimension of product graphs were initiated in \cite{Caceres2007,Peters-Fransen2006}, where several tight bounds and closed formulae for the metric dimension of Cartesian product graphs were presented. After that, the metric dimension of corona graphs, rooted product graphs, lexicographic product graphs and strong product graphs was studied in \cite{Yero2011}, \cite{Yero2013b}, \cite{JanOmo2012,Saputro2013} and \cite{Rodriguez-Velazquez-et-al2014}, respectively. In this work we continue with the study of the $k$-metric dimension of the corona product graphs. To this end, we introduce some notation and terminology.

If two vertices $u,v$ are adjacent in $G=(V,E)$, then we write $u\sim v$ or  $uv\in E(G)$. Given  $x\in V(G)$, we define $N_{G}(x)$ as the \emph{open neighborhood} of $x$ in $G$, \textit{i.e.},  $N_{G}(x)=\{y\in V(G):x\sim y\}$. The \emph{closed neighborhood}, denoted by $N_{G}[x]$, equals $N_{G}(x)\cup \{x\}$. If there is no  ambiguity, we will simply write  $N(x)$ or $N[x]$. We also refer to the degree of $v$ as $\delta(v)=|N(v)|$.
For a non-empty set $S \subseteq V(G)$, and a vertex $v \in V(G)$, $N_S(v)$ denotes the set of neighbors that $v$ has in $S$, {\it i.e.}, $N_S(v) = S\cap N(v)$. As usual, we denote by $A\triangledown B=(A\cup B)- (A\cap B)$ the symmetric difference of two sets  $A$ and $B$.

We now recall that the \emph{join graph} $G+H$ of the graphs $G=(V_{1},E_{1})$ and $H=(V_{2},E_{2})$ is the graph with vertex set $V(G+H)=V_{1}\cup V_{2}$ and edge set $E(G+H)=E_{1}\cup E_{2}\cup \{uv\,:\,u\in V_{1},v\in V_{2}\}$.

Let $G$ be a graph of order $n$ and let ${\cal H}=\{H_1,H_2,...,H_n\}$ be a family of graphs. The corona product graph $G\odot {\cal H}$ is defined as the graph obtained from $G$ and ${\cal H}$ by taking one copy of $G$ and joining by an edge each vertex of $H_i$ with the $i^{th}$-vertex of $G$, \cite{Frucht1970}. Notice that the particular case of corona graph $K_1\odot H$ is isomorphic to the join graph $K_1+H$. From now on  we will denote by $V=\{v_1,v_2,\ldots, v_{n}\}$ the set of vertices of $G$ and by $H_i=(V_i, E_i)$ the graphs belonging to ${\cal H}$.  So the vertex set of $G\odot {\cal H}$ is $V(G\odot {\cal H})=V\cup\left(\bigcup_{i=1}^n V_i\right)$. Also, the order of the graph $H_i\in {\cal H}$ will be denoted $n_i$. In particular, if
every $H_i\in {\cal H}$ holds that $H_i\cong H$, then we will use the notation $G\odot H$ instead of  $G\odot {\cal H}$. In this work, the remaining definitions will be given the first time that the concept appears in the text.

Several results about the $k$-metric dimension of  corona product graphs, $G\odot {\cal H}$, where at least one graph belonging to ${\cal H}$ is trivial, are presented in \cite{Estrada-Moreno2013a}. Thus, the aim of this paper is to study the case where all graphs belonging to ${\cal H}$ are non-trivial.

The paper is organized as follows: in Section \ref{sectionDimensionalCorona}  we give some necessary and sufficient conditions for the existence of a $k$-metric basis for an arbitrary connected  corona graph $G\odot\mathcal{H}$. So, we determine the range of possible values for $k$, where $\dim_k(G\odot\mathcal{H})$ makes sense. In Section  \ref{KmetricDimCoronaGraphs} we obtain tight bounds and closed formulae for the $k$-metric dimension of  corona graphs where the values of  $k$ cover the range stated in Section \ref{sectionDimensionalCorona}.

\section{$k$-metric dimensional corona graphs}\label{sectionDimensionalCorona}

A connected graph $G$ is said to be a \emph{$k'$-metric dimensional graph} if $k'$ is the largest integer such that there exists a $k'$-metric basis \cite{Estrada-Moreno2013}. Notice that if $G$ is a $k'$-metric dimensional graph, then for each positive integer $k\le k'$, there exists at least one $k$-metric basis for $G$, \textit{i.e.,} $\dim_{k}(G)$ makes sense for $k\in \{1,...,k'\}$. Since for every pair of vertices $x,y$ of a graph $G$, we have that they are distinguished at least by themselves, it follows that the whole vertex set $V(G)$ is a $2$-metric generator for $G$ and, as a consequence, it follows that every graph $G$ is $k'$-metric dimensional for some $k'\ge 2$. On the other hand, for any connected graph $G$ of order $n>2$, there exists at least one vertex $v\in V(G)$ such that $\delta(v)\ge 2$. Since $v$ does not distinguish any pair $x,y\in N_G(v)$,  there is no $n$-metric dimensional graph of order $n>2$.

We first present a characterization of $k$-metric dimensional graphs obtained in \cite{Estrada-Moreno2013}. To do so, we need  some additional terminology. Given two vertices $x,y\in V(G)$, we say that the set of \textit{distinctive vertices} of $x,y$ is $${\cal D}_G(x,y)=\{z\in V(G): d_{G}(x,z)\ne d_{G}(y,z)\}$$
and, the set of \emph{non-trivial distinctive vertices} of  $x,y$ is $${\cal D}^*_G(x,y)={\cal D}_G(x,y)-\{x,y\}.$$

\begin{theorem} \label{theokmetric}  {\rm \cite{Estrada-Moreno2013}}
A connected graph  $G$ is $k$-metric dimensional  if and only if $k=\displaystyle\min_{x,y\in V(G)}\{\vert {\cal D}_G(x,y)\vert\} .$
\end{theorem}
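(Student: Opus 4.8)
The plan is to show the two directions separately, building on the simple observation that a set $S$ is a $k$-metric generator if and only if $|S \cap {\cal D}_G(x,y)| \ge k$ for every pair of different vertices $x,y \in V(G)$. This reformulation follows immediately from the definition: the vertices distinguishing $x$ from $y$ are exactly those in ${\cal D}_G(x,y)$, so demanding $k$ distinguishers is demanding that $S$ meet ${\cal D}_G(x,y)$ in at least $k$ points. Let me write $k^* = \min_{x,y \in V(G)} |{\cal D}_G(x,y)|$; I want to show $G$ is $k^*$-metric dimensional, i.e. that $k^*$ is the largest $k$ for which a $k$-metric basis exists.

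First I would prove that a $k^*$-metric generator exists, which shows $k^* $ is achievable. The candidate is $S = V(G)$ itself: for any pair $x,y$ we have $|V(G) \cap {\cal D}_G(x,y)| = |{\cal D}_G(x,y)| \ge k^*$ by definition of the minimum, so $V(G)$ is a $k^*$-metric generator and in particular a $k^*$-metric basis exists (of size at most $n$). Hence $G$ is $k'$-metric dimensional for some $k' \ge k^*$.

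Second I would prove that no $(k^*+1)$-metric generator can exist, which forces $k' \le k^*$ and hence $k' = k^*$. Pick a pair $x_0, y_0$ attaining the minimum, so $|{\cal D}_G(x_0,y_0)| = k^*$. If $S$ were a $(k^*+1)$-metric generator, then in particular $S$ would distinguish $x_0$ from $y_0$ by at least $k^*+1$ vertices, i.e. $|S \cap {\cal D}_G(x_0,y_0)| \ge k^*+1$; but $S \cap {\cal D}_G(x_0,y_0) \subseteq {\cal D}_G(x_0,y_0)$, which has only $k^*$ elements — a contradiction. Therefore the largest $k$ admitting a $k$-metric basis is exactly $k^*$, which is precisely the assertion that $G$ is $k^*$-metric dimensional.

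There is no serious obstacle here; the argument is essentially a matching between the combinatorial definition of "$k$-metric dimensional" and the quantity $\min_{x,y}|{\cal D}_G(x,y)|$. The only point requiring a word of care is the reformulation in the first paragraph — one should note that ${\cal D}_G(x,y)$ is symmetric in $x$ and $y$ and that it is always non-empty for $x \ne y$ (since $x \in {\cal D}_G(x,y)$ as $d_G(x,x)=0 \ne d_G(x,y)$), which guarantees $k^* \ge 1$ so that the statement is not vacuous. Everything else is the two inclusions above.
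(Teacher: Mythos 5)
Your proof is correct and complete: the reformulation ``$S$ is a $k$-metric generator iff $|S\cap {\cal D}_G(x,y)|\ge k$ for every pair of distinct vertices'' immediately yields both that $V(G)$ is a $k^*$-metric generator and that no pair attaining the minimum can be distinguished by more than $k^*$ vertices. Note that the paper itself states Theorem \ref{theokmetric} as a citation to \cite{Estrada-Moreno2013} without reproducing a proof, and your argument is precisely the standard one used there.
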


Two vertices $x,y$ are called \emph{false twins} if $N(x)=N(y)$, and $x,y$ are called \emph{true twins} if $N[x]=N[y]$. Two vertices $x,y$ are \emph{twins} if they are false twin vertices or true twin vertices. Notice that two vertices $x,y$ are twins if and only if
${\cal D}^*_G(x,y)=\emptyset$, {\em i.e.}, ${\cal D}_G(x,y)=\{x,y\}$. We also say that a vertex $x$ is a twin, if there exists other vertex $y$ such that $x,y$ are twins.

\begin{corollary}\label{coro2dimensional} {\rm \cite{Estrada-Moreno2013}}
A connected graph $G$ of order $n\geq 2$ is $2$-metric dimensional if and only if $G$ has  twin vertices.
\end{corollary}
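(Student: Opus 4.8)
The plan is to derive the statement directly from Theorem~\ref{theokmetric} together with the elementary observation that distances from a vertex to itself are always zero. First I would note that for any two distinct vertices $x,y\in V(G)$ we have $x,y\in{\cal D}_G(x,y)$: indeed $d_G(x,x)=0$ while $d_G(y,x)\ge 1$ since $G$ is connected and $x\neq y$, so $x\in{\cal D}_G(x,y)$, and symmetrically $y\in{\cal D}_G(x,y)$. Hence $\vert{\cal D}_G(x,y)\vert\ge 2$ for every pair of distinct vertices, and consequently $\min_{x,y\in V(G)}\{\vert{\cal D}_G(x,y)\vert\}\ge 2$.

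Next, by Theorem~\ref{theokmetric}, $G$ is $2$-metric dimensional if and only if $\min_{x,y\in V(G)}\{\vert{\cal D}_G(x,y)\vert\}=2$. In view of the lower bound just established, this minimum equals $2$ precisely when there exists a pair of distinct vertices $x,y$ with $\vert{\cal D}_G(x,y)\vert=2$, and since $\{x,y\}\subseteq{\cal D}_G(x,y)$, that equality forces ${\cal D}_G(x,y)=\{x,y\}$, i.e. ${\cal D}^*_G(x,y)=\emptyset$. By the characterization recalled immediately before the corollary, ${\cal D}^*_G(x,y)=\emptyset$ is equivalent to $x$ and $y$ being twins (false twins or true twins). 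Therefore $G$ is $2$-metric dimensional if and only if $G$ possesses twin vertices.

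I do not anticipate any real obstacle here: the only point requiring a moment's care is checking that the minimum cannot be smaller than $2$ (so that being $2$-metric dimensional is exactly the ``smallest'' case and the equivalence is with the \emph{existence} of one twin pair rather than with all pairs being twins), which is handled by the self-distance argument above. The hypothesis $n\ge 2$ is needed only to ensure that pairs of distinct vertices exist; everything else is a direct translation through Theorem~\ref{theokmetric} and the twin characterization.
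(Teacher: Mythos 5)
Your argument is correct and is exactly the derivation the paper intends: the corollary is imported from \cite{Estrada-Moreno2013} without an explicit proof, but the surrounding text supplies precisely your two ingredients (every pair is distinguished by its own two vertices, and $x,y$ are twins iff ${\cal D}_G(x,y)=\{x,y\}$), which combine with Theorem~\ref{theokmetric} just as you describe. No gaps.
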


If there exists a graph $H_i\in\mathcal{H}$ such that $H_i$ has twin vertices, then it follows that for any graph $G$,  the corona graph $G\odot\mathcal{H}$ has twin vertices. Also notice that any two vertices of $G$ are not twins in $G\odot\mathcal{H}$. Therefore, according to Corollary \ref{coro2dimensional} we deduce the following result.

\begin{remark}\label{remarkCorona2}
For any connected graph $G$ of order $n$ and any family $\mathcal{H}$ composed by $n$ connected non-trivial graphs, the corona graph $G\odot\mathcal{H}$ is $2$-metric dimensional if and only if there exists a $2$-metric dimensional graph $H_i\in\mathcal{H}$.
\end{remark}

\begin{corollary}
Let $G$ be a connected   graph. Then,
\begin{enumerate}[{\rm (i)}]
\item For $n\ge 2$, the graph $G\odot K_n$ is $2$-metric dimensional.

\item The graphs   $G\odot P_3$ and  $G\odot C_4$  are  $2$-metric dimensional.
\end{enumerate}
\end{corollary}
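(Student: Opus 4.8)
The plan is to obtain both statements as immediate consequences of Remark~\ref{remarkCorona2}, whose ``if'' direction reduces the question to finding, in each of the relevant small graphs, a pair of twin vertices and then invoking Corollary~\ref{coro2dimensional}. So the whole proof amounts to pointing out the obvious twins in $K_n$, $P_3$ and $C_4$.

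First, for (i): in $K_n$ with $n\ge 2$, any two vertices $x,y$ satisfy $N[x]=V(K_n)=N[y]$, so they are true twins; hence $K_n$ has twin vertices and, by Corollary~\ref{coro2dimensional}, it is a $2$-metric dimensional graph. Since $K_n$ is connected and non-trivial, taking $\mathcal{H}$ to be the family of $|V(G)|$ copies of $K_n$ and applying Remark~\ref{remarkCorona2} yields that $G\odot K_n$ is $2$-metric dimensional.

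Next, for (ii): label $P_3$ as $a\sim b\sim c$; the two leaves give $N(a)=\{b\}=N(c)$, so $a$ and $c$ are false twins. Label $C_4$ as $a\sim b\sim c\sim d\sim a$; then $N(a)=\{b,d\}=N(c)$, so $a$ and $c$ are again false twins. Thus both $P_3$ and $C_4$ contain twin vertices and are, by Corollary~\ref{coro2dimensional}, $2$-metric dimensional graphs. Since they are connected and non-trivial, Remark~\ref{remarkCorona2} applied with $\mathcal{H}$ consisting of $|V(G)|$ copies of $P_3$ (respectively of $C_4$) gives that $G\odot P_3$ and $G\odot C_4$ are $2$-metric dimensional.

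There is essentially no computational difficulty here; the only points to keep in mind are that the application of Remark~\ref{remarkCorona2} needs the members of $\mathcal{H}$ to be connected and non-trivial, which holds for $K_n$ with $n\ge 2$, for $P_3$ and for $C_4$, and that ``$2$-metric dimensional'' already encodes the maximality of the value $2$, so that no separate verification that no $k$-metric basis exists for $k\ge 3$ is required, Corollary~\ref{coro2dimensional} handling that. I do not anticipate any genuine obstacle.
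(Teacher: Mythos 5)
Your proof is correct and matches the paper's intended argument: the corollary is stated there as an immediate consequence of Remark~\ref{remarkCorona2} together with Corollary~\ref{coro2dimensional}, exactly via the twin pairs you exhibit in $K_n$, $P_3$ and $C_4$.
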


\subsection{$k$-metric dimensional graphs of the form  $G\odot \mathcal{H}$, where $G\not\cong K_1$.}

Given a connected non-trivial graph $H$, we define $$\mathcal{C}(H)=\min_{x,y\in V(H)}\{\vert N_H(x)\triangledown N_H(y) \cup\{x,y\}\vert\}.$$ According to that notation, for a family of connected non-trivial graphs $\mathcal{H}$, we define $$\mathcal{C}(\mathcal{H})=\min_{H_i\in\mathcal{H}}\{\mathcal{C}(H_i)\}.$$

\begin{theorem}\label{theoKmetricCorona}
Let $G$ be a connected non-trivial graph of order $n$ and let $\mathcal{H}$ be a family of $n$ non-trivial graphs. Then, $G\odot \mathcal{H}$ is $k$-metric dimensional if and only if $k=\mathcal{C}(\mathcal{H})$.
\end{theorem}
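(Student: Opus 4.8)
The plan is to reduce the statement to Theorem~\ref{theokmetric}. Since $G$ is connected and every vertex of each $H_i$ is joined to $v_i$, the graph $G\odot\mathcal{H}$ is connected, so by Theorem~\ref{theokmetric} it suffices to prove that
$$\min_{x,y\in V(G\odot\mathcal{H})}\bigl\vert\mathcal{D}_{G\odot\mathcal{H}}(x,y)\bigr\vert=\mathcal{C}(\mathcal{H}),$$
the minimum being taken over pairs of distinct vertices. The first step is to record the distances in $G\odot\mathcal{H}$: for $u\in V_i$, a vertex $z$ satisfies $d_{G\odot\mathcal{H}}(u,z)=1$ if $z=v_i$ or $z\in N_{H_i}(u)$, $d_{G\odot\mathcal{H}}(u,z)=2$ if $z\in V_i\setminus N_{H_i}[u]$, $d_{G\odot\mathcal{H}}(u,z)=d_G(v_i,v_j)+1$ if $z=v_j$ with $j\neq i$, and $d_{G\odot\mathcal{H}}(u,z)=d_G(v_i,v_j)+2$ if $z\in V_j$ with $j\neq i$; and $d_{G\odot\mathcal{H}}(v_i,v_j)=d_G(v_i,v_j)$. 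These formulas hold whether or not the graphs $H_i$ are connected.

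The core of the argument is the case in which $x,y$ lie in the same set $V_i$; write $x=u$, $y=w$. From the distance formulas, any vertex outside $V_i$ is equidistant from $u$ and $w$, hence does not distinguish them; a vertex $z\in V_i\setminus\{u,w\}$ distinguishes $u$ from $w$ precisely when exactly one of $uz,wz$ lies in $E_i$, i.e.\ when $z\in N_{H_i}(u)\triangledown N_{H_i}(w)$; and $u,w$ distinguish themselves. Therefore $\mathcal{D}_{G\odot\mathcal{H}}(u,w)=\{u,w\}\cup\bigl(N_{H_i}(u)\triangledown N_{H_i}(w)\bigr)$, so $\vert\mathcal{D}_{G\odot\mathcal{H}}(u,w)\vert=\vert N_{H_i}(u)\triangledown N_{H_i}(w)\cup\{u,w\}\vert$. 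Minimising over distinct $u,w\in V_i$ gives $\mathcal{C}(H_i)$, and then minimising over $i$ gives $\mathcal{C}(\mathcal{H})$; in particular the value $\mathcal{C}(\mathcal{H})$ is attained by some pair of vertices of $G\odot\mathcal{H}$.

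It remains to show $\vert\mathcal{D}_{G\odot\mathcal{H}}(x,y)\vert\geq\mathcal{C}(\mathcal{H})$ for every pair $\{x,y\}$ not contained in a single $V_i$. Here I would use the elementary bound $\mathcal{C}(H)\leq\vert V(H)\vert$, valid for any non-trivial graph $H$ because $N_H(x)\triangledown N_H(y)\cup\{x,y\}\subseteq V(H)$; it yields $\mathcal{C}(\mathcal{H})\leq n_l$ for every index $l$. A short inspection of the four possible configurations of such a pair — $x\in V_i,\ y\in V_j$ with $i\neq j$; $x=v_i,\ y=v_j$ with $i\neq j$; $x\in V_i,\ y=v_i$; and $x\in V_i,\ y=v_j$ with $j\neq i$ — shows, using the distance formulas again, that in each case there is an index $l$ (for instance $l=i$ in the first two subcases, any $l\neq i$ in the third, and $l=j$ in the fourth; the existence of such an $l$ in the third subcase is where $G\not\cong K_1$ is used) such that every vertex of $V_l$ is at different distances from $x$ and from $y$, hence $V_l\subseteq\mathcal{D}_{G\odot\mathcal{H}}(x,y)$. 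Consequently $\vert\mathcal{D}_{G\odot\mathcal{H}}(x,y)\vert\geq n_l\geq\mathcal{C}(\mathcal{H})$. Combining this with the previous paragraph gives the displayed equality, and Theorem~\ref{theokmetric} finishes the proof.

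The step I expect to demand the most care is the same-$V_i$ case: one must verify rigorously that \emph{no} vertex outside $V_i$ can distinguish two vertices of $V_i$ — this is precisely the feature of the corona construction that confines the ``hardest'' pairs to the copies $H_i$ — and one must track the overlap between $N_{H_i}(u)\triangledown N_{H_i}(w)$ and $\{u,w\}$ (these meet exactly when $uw\in E_i$), which is why the definition of $\mathcal{C}$ keeps the union with $\{x,y\}$ explicit. The other cases are routine once the bound $\mathcal{C}(H)\leq\vert V(H)\vert$ is in hand.
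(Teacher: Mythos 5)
Your proposal is correct and follows essentially the same route as the paper: both reduce to Theorem \ref{theokmetric}, compute $\mathcal{D}_{G\odot\mathcal{H}}(x,y)=(N_{H_i}(x)\triangledown N_{H_i}(y))\cup\{x,y\}$ for pairs inside a single copy $V_i$, and dispose of all remaining pairs via the bound $\mathcal{C}(H_l)\le\vert V_l\vert$ together with the observation that some whole set $V_l$ lies in $\mathcal{D}_{G\odot\mathcal{H}}(x,y)$. Your write-up merely makes the distance formulas and the choice of the index $l$ more explicit than the paper does.
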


\begin{proof}
We claim that $\mathcal{C}(\mathcal{H})=\displaystyle\min_{x,y\in V(G\odot \mathcal{H})}\{\vert {\cal D}_{G\odot \mathcal{H}}(x,y)\vert\}$.
Notice that, for every $u,v\in V(H_i)$, we have that $|N_{H_i}(u)\triangledown N_{H_i}(v)|\le |V(H_i)|$. Let $x,y$ be two different vertices of $G\odot \mathcal{H}$. We consider the following cases.
\\
\\
\noindent Case 1. If $x\in V_i$ and $y\in V_j$, $i\ne j$, then $\mathcal{D}_{G\odot \mathcal{H}}(x,y)=\displaystyle\bigcup_{v_l\in \mathcal{D}_G(v_i,v_j)}(V_l\cup \{v_l\})$.
\\
\\
\noindent Case 2. If $x,y\in V$, then we assume that $x=v_i$ and $y=v_j$. So, it follows that $\mathcal{D}_{G\odot \mathcal{H}}(x,y)=\displaystyle\bigcup_{v_l\in \mathcal{D}_G(v_i,v_j)}(V_l\cup \{v_l\})$.
\\
\\
\noindent Case 3. If $x\in V_i$ and $y\in V$, then $y=v_j$ for some $j\in \{1,...,n\}$ and we consider the following. If $j=i$, then $\mathcal{D}_{G\odot \mathcal{H}}(x,y)=V(G\odot \mathcal{H})-N_{H_i}(x)$. Now, if $j\ne i$, then we have $\mathcal{D}_{G\odot \mathcal{H}}(x,y) \supseteq V_j$.
\\
\\
\noindent Case 4. If $x,y\in V_i$, then $\mathcal{D}_{G\odot \mathcal{H}}(x,y)=(N_{H_i}(x)\triangledown N_{H_i}(y))\cup\{x,y\}.$

Now, notice that from Cases $1$, $2$ and $3$, $|\mathcal{D}_{G\odot \mathcal{H}}(x,y)|\ge \displaystyle\min_{H_i\in\mathcal{H}}\lbrace|V_i|\rbrace\ge \min_{H_i\in\mathcal{H}}\{\mathcal{C}(H_i)\}=\mathcal{C}(\mathcal{H})$. Also, in Case 4, for every $x,y\in V_i$ we have that $\vert\mathcal{D}_{G\odot \mathcal{H}}(x,y) \vert=\vert (N_{H_i}(x)\triangledown N_{H_i}(y))\cup\{x,y\}\vert\ge \displaystyle\min_{H_j\in\mathcal{H}}\{\mathcal{C}(H_j)\}=\mathcal{C}(\mathcal{H})$.
Thus, $$\mathcal{C}(\mathcal{H})\le \min_{x,y\in V(G\odot \mathcal{H})}\{\vert {\cal D}_{G\odot \mathcal{H}}(x,y)\vert\}.$$
On the other hand, we consider the following.
\begin{align*}
\min_{x,y\in V(G\odot \mathcal{H})}\{\vert {\cal D}_{G\odot \mathcal{H}}(x,y)\vert\}&
\le \min_{x,y\in V(G\odot \mathcal{H})-V(G)}\{\vert {\cal D}_{G\odot \mathcal{H}}(x,y)\vert\}\\
&\le \min_{H_i\in\mathcal{H}}\lbrace\min_{x,y\in V_i}\{\vert {\cal D}_{G\odot \mathcal{H}}(x,y)\vert\}\rbrace\\
&=\min_{H_i\in\mathcal{H}}\lbrace\min_{x,y\in V_i}\{\vert N_{H_i}(x)\triangledown N_{H_i}(y)\cup\{x,y\}\vert\}\rbrace\\
&=\min_{H_i\in\mathcal{H}}\{\mathcal{C}(H_i)\}\\
&=\mathcal{C}(\mathcal{H}).
\end{align*}
Therefore $\mathcal{C}(\mathcal{H})=\displaystyle\min_{x,y\in V(G\odot \mathcal{H})}\{\vert {\cal D}_{G\odot \mathcal{H}}(x,y)\vert\}$ and, by Theorem \ref{theokmetric}, we conclude the proof.
\end{proof}

Notice that if every $H_i\in\mathcal{H}$ satisfies that $H_i\cong H$, then $\mathcal{C}(\mathcal{H})=\mathcal{C}(H)$. Thus, the following result follows from Theorem \ref{theoKmetricCorona}.

\begin{corollary}\label{coroKmetricCorona}
Let $G$ and $H$ be two connected non-trivial graphs. Then $G\odot H$ is $k$-metric dimensional if and only if $k=\mathcal{C}(H)$.
\end{corollary}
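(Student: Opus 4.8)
The plan is to obtain this statement as the homogeneous special case of Theorem~\ref{theoKmetricCorona}. First I would note that $G\odot H$ is, by definition, the corona graph $G\odot\mathcal{H}$ for the family $\mathcal{H}=\{H_1,\dots,H_n\}$ in which every $H_i$ is a copy of $H$; since $G$ and $H$ are connected and non-trivial, all the hypotheses of Theorem~\ref{theoKmetricCorona} are satisfied for this family.

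Next I would evaluate $\mathcal{C}(\mathcal{H})$ for this particular family. By definition $\mathcal{C}(\mathcal{H})=\min_{H_i\in\mathcal{H}}\mathcal{C}(H_i)$, where $\mathcal{C}(H_i)=\min_{x,y\in V(H_i)}\vert N_{H_i}(x)\triangledown N_{H_i}(y) \cup\{x,y\}\vert$. The quantity $\mathcal{C}(H_i)$ depends only on the isomorphism type of $H_i$: any adjacency-preserving bijection $\varphi\colon V(H_i)\to V(H)$ satisfies $\varphi(N_{H_i}(x))=N_H(\varphi(x))$, and hence maps the set $N_{H_i}(x)\triangledown N_{H_i}(y)\cup\{x,y\}$ bijectively onto $N_H(\varphi(x))\triangledown N_H(\varphi(y))\cup\{\varphi(x),\varphi(y)\}$, preserving cardinalities. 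Thus $\mathcal{C}(H_i)=\mathcal{C}(H)$ for every $i$, so that $\mathcal{C}(\mathcal{H})=\mathcal{C}(H)$.

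Finally, substituting $\mathcal{C}(\mathcal{H})=\mathcal{C}(H)$ into the conclusion of Theorem~\ref{theoKmetricCorona} yields that $G\odot H=G\odot\mathcal{H}$ is $k$-metric dimensional if and only if $k=\mathcal{C}(H)$, which is exactly the claim. There is essentially no obstacle here: the only point needing a (routine) argument is the isomorphism-invariance of $\mathcal{C}(\cdot)$, which is immediate from the invariance of neighborhoods, symmetric differences and cardinalities under graph isomorphism; everything else is a direct appeal to the preceding theorem.
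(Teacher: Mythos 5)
Your proposal is correct and follows exactly the paper's route: the paper derives this corollary from Theorem~\ref{theoKmetricCorona} by the single observation that $\mathcal{C}(\mathcal{H})=\mathcal{C}(H)$ when every $H_i\cong H$, which is precisely your argument (you merely spell out the isomorphism-invariance of $\mathcal{C}(\cdot)$ that the paper leaves implicit).
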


According to Theorem \ref{theoKmetricCorona}, if the corona graph $G\odot \mathcal{H}$ is $k$-metric dimensional, then the value of $k$ is independent from the connected non-trivial graph $G$. Moreover,  for any $x,y\in V_i$ it holds ${\cal D}_{H_i}(x,y)\supseteq (N_{H_i}(x)\triangledown N_{H_i}(y))\cup \{x,y\}$. Therefore, by Theorems \ref{theokmetric} and  \ref{theoKmetricCorona} we deduce the following result.

\begin{proposition}\label{IHkDimThenCoronakDim}
Let $G\odot \mathcal{H}$ be a $k$-metric dimensional graph such that
$G$ is a connected non-trivial graph and  $\mathcal{H}=\{H_1,H_2,...,H_n\}$ is a family of connected non-trivial graphs, where  $H_i$ is $k_i$-metric dimensional for $i\in \{1,...,n\}$. Then the following assertions hold:

\begin{enumerate}[{\rm (i)}]
\item $k\le \displaystyle\min_{i\in \{1,...,n\}}\{k_i\}.$

\item   $k=k_j$ if and only if $ \displaystyle\min_{i\in \{1,...,n\}}\left\{ \mathcal{C}(H_i) \right\}=\displaystyle\min_{x,y\in V_j}\{\vert {\cal D}_{H_j}(x,y)\vert\}.$

\item If $k=k_j$, then $\mathcal{C}(H_j)=\displaystyle\min_{x,y\in V_j}\{\vert {\cal D}_{H_j}(x,y)\vert\}$.
\end{enumerate}
\end{proposition}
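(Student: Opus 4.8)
The plan is to read off everything from the two closed-form characterizations already at hand. Theorem~\ref{theoKmetricCorona} tells us that $k=\mathcal{C}(\mathcal{H})=\min_{i\in\{1,\dots,n\}}\{\mathcal{C}(H_i)\}$, while Theorem~\ref{theokmetric} applied to each factor $H_i$ gives $k_i=\min_{x,y\in V_i}\{|\mathcal{D}_{H_i}(x,y)|\}$. The single additional ingredient I need is the pointwise bound $|\mathcal{D}_{H_i}(x,y)|\ge |(N_{H_i}(x)\triangledown N_{H_i}(y))\cup\{x,y\}|$ for all $x,y\in V_i$, which is immediate from the containment $\mathcal{D}_{H_i}(x,y)\supseteq (N_{H_i}(x)\triangledown N_{H_i}(y))\cup\{x,y\}$ recorded just before the statement; minimizing both sides over the pairs of $V_i$ then yields $\mathcal{C}(H_i)\le k_i$ for every $i$.

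With this in place, part~(i) is the chain $k=\min_i\{\mathcal{C}(H_i)\}\le\min_i\{k_i\}$. Part~(ii) is a pure substitution: replacing $k$ by $\mathcal{C}(\mathcal{H})=\min_i\{\mathcal{C}(H_i)\}$ and $k_j$ by $\min_{x,y\in V_j}\{|\mathcal{D}_{H_j}(x,y)|\}$ turns the assertion $k=k_j$ into exactly the displayed identity, so nothing further is required. For part~(iii), assuming $k=k_j$ I would squeeze: on one side $\mathcal{C}(H_j)\ge\min_i\{\mathcal{C}(H_i)\}=k=k_j=\min_{x,y\in V_j}\{|\mathcal{D}_{H_j}(x,y)|\}$, and on the other side $\min_{x,y\in V_j}\{|\mathcal{D}_{H_j}(x,y)|\}\ge\mathcal{C}(H_j)$ by the pointwise bound above, whence equality holds throughout and in particular $\mathcal{C}(H_j)=\min_{x,y\in V_j}\{|\mathcal{D}_{H_j}(x,y)|\}$.

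I do not expect a genuine obstacle here; the proposition is essentially a bookkeeping corollary of Theorems~\ref{theokmetric} and~\ref{theoKmetricCorona}. The only point deserving a word of care is the passage from the pointwise inequality between the two functions of $(x,y)$ to the inequality between their minima over $V_i$ --- elementary, but it is precisely what delivers $\mathcal{C}(H_i)\le k_i$, and the same elementary fact is reused in the squeeze for part~(iii).
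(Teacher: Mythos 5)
Your proposal is correct and follows essentially the same route as the paper, which derives the proposition directly from Theorem~\ref{theokmetric} and Theorem~\ref{theoKmetricCorona} together with the containment ${\cal D}_{H_i}(x,y)\supseteq (N_{H_i}(x)\triangledown N_{H_i}(y))\cup \{x,y\}$, i.e., the pointwise bound $\mathcal{C}(H_i)\le k_i$. Your write-up merely makes explicit the substitution and squeeze steps that the paper leaves implicit.
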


If a graph $H$ has diameter  $D(H)\le 2$, then for every $x,y\in V(H)$ it holds  ${\cal D}_H(x,y)=N_H(x)\triangledown N_H(y)\cup \{x,y\}$. Thus, the following result is deduced.

\begin{corollary}
Let $G\odot \mathcal{H}$ be a $k$-metric dimensional graph where
$G$ is a connected non-trivial graph and  $\mathcal{H}=\{H_1,H_2,...,H_n\}$ is a family of  graphs such that  $H_i$ is $k_i$-metric dimensional and $D(H_i)\le 2$, for every $i\in \{1,...,n\}$. Then
$k= \displaystyle\min_{i\in \{1,...,n\}}\{k_i\}.$
\end{corollary}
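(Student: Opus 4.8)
The plan is to deduce the formula directly from the identity recorded just before the corollary together with Theorems~\ref{theokmetric} and~\ref{theoKmetricCorona}. Fix $i\in\{1,\dots,n\}$. Since $D(H_i)\le 2$, the graph $H_i$ is in particular connected, so for distinct $x,y\in V_i$ and any $z\in V_i\setminus\{x,y\}$ both $d_{H_i}(x,z)$ and $d_{H_i}(y,z)$ lie in $\{1,2\}$; hence $z\in{\cal D}_{H_i}(x,y)$ if and only if exactly one of $x,y$ is adjacent to $z$, that is, $z\in N_{H_i}(x)\triangledown N_{H_i}(y)$. As $x$ and $y$ themselves always belong to ${\cal D}_{H_i}(x,y)$, this gives ${\cal D}_{H_i}(x,y)=\bigl(N_{H_i}(x)\triangledown N_{H_i}(y)\bigr)\cup\{x,y\}$, precisely the equality stated in the text.

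Taking the minimum over all pairs $x,y\in V_i$ on both sides of this equality yields $\min_{x,y\in V_i}\{\vert{\cal D}_{H_i}(x,y)\vert\}=\min_{x,y\in V_i}\{\vert N_{H_i}(x)\triangledown N_{H_i}(y)\cup\{x,y\}\vert\}=\mathcal{C}(H_i)$. Because $H_i$ is connected, Theorem~\ref{theokmetric} applies to it and identifies the left-hand side as $k_i$; therefore $k_i=\mathcal{C}(H_i)$ for every $i\in\{1,\dots,n\}$. (As a consistency check, Proposition~\ref{IHkDimThenCoronakDim}(i) already gives $k\le\min_i k_i$, so only the reverse inequality is really at stake, and it will follow from the next step.)

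Finally, Theorem~\ref{theoKmetricCorona} asserts that $G\odot\mathcal{H}$ is $\mathcal{C}(\mathcal{H})$-metric dimensional, and by definition $\mathcal{C}(\mathcal{H})=\min_{i\in\{1,\dots,n\}}\{\mathcal{C}(H_i)\}$. Combining this with $k_i=\mathcal{C}(H_i)$ we get $k=\mathcal{C}(\mathcal{H})=\min_{i\in\{1,\dots,n\}}\{k_i\}$, as claimed. The only point demanding any care is the first one: verifying the diameter-$2$ description of ${\cal D}_{H_i}(x,y)$ and observing that $D(H_i)\le2$ forces each $H_i$ to be connected, so that Theorem~\ref{theokmetric} is legitimately applicable to it; everything else is a routine chaining of the two cited theorems.
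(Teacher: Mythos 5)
Your proposal is correct and follows essentially the same route as the paper: the paper's deduction is exactly the observation that $D(H_i)\le 2$ forces ${\cal D}_{H_i}(x,y)=\left(N_{H_i}(x)\triangledown N_{H_i}(y)\right)\cup\{x,y\}$, hence $k_i=\mathcal{C}(H_i)$ via Theorem~\ref{theokmetric}, combined with Theorem~\ref{theoKmetricCorona}. Your verification of the diameter-two identity and the remark that $D(H_i)\le 2$ guarantees connectivity are the right details to check, and nothing is missing.
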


The \textit{girth} $g(H)$ of a graph $H$ is the length of a shortest cycle contained in $H$. Now, if  $g(H)\ge 5$, then for every $x,y\in V(H)$  we have that either $\vert N_H(x)\cap N_H(y) \vert=1$ or $\vert N_H(x)\cap N_H(y) \vert=0$. Hence, it follows that the next result as a consequence of Theorem \ref{theoKmetricCorona}.

\begin{corollary} \label{cuello-ge5}
Let
$G$ be a connected non-trivial graph of order $n$ and  let $\mathcal{H}=\{H_1,H_2,...,H_n\}$ be a family of  $\delta$-regular graphs where   $g(H_i)\ge 5$, for every $i\in \{1,...,n\}$. Then
$G\odot \mathcal{H}$ is a $2\delta$-metric dimensional graph.
\end{corollary}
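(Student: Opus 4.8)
The plan is to invoke Theorem~\ref{theoKmetricCorona}, which asserts that $G\odot\mathcal{H}$ is $k$-metric dimensional if and only if $k=\mathcal{C}(\mathcal{H})=\min_{H_i\in\mathcal{H}}\{\mathcal{C}(H_i)\}$. Thus the whole statement reduces to showing that $\mathcal{C}(H_i)=2\delta$ for every $i\in\{1,\dots,n\}$. Fixing an arbitrary $H=H_i$, I would estimate the quantity $\vert N_H(x)\triangledown N_H(y)\cup\{x,y\}\vert$ for an arbitrary pair of distinct vertices $x,y\in V(H)$ and then take the minimum.

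First I would use $\delta$-regularity to write $\vert N_H(x)\triangledown N_H(y)\vert=2\delta-2\vert N_H(x)\cap N_H(y)\vert$. The girth hypothesis $g(H)\ge 5$ then forces $\vert N_H(x)\cap N_H(y)\vert\le 1$ (two common neighbours of $x,y$ would produce a $4$-cycle), and moreover $\vert N_H(x)\cap N_H(y)\vert=0$ whenever $x\sim y$ (a common neighbour of adjacent $x,y$ would produce a triangle). Next I would split into two cases according to the adjacency of $x$ and $y$. If $x\sim y$, then $x\in N_H(y)$, $y\in N_H(x)$ and the two neighbourhoods are disjoint, so $N_H(x)\triangledown N_H(y)\cup\{x,y\}=N_H(x)\cup N_H(y)$ has cardinality exactly $2\delta$. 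If $x\not\sim y$, then $x,y\notin N_H(x)\cup N_H(y)$, so the cardinality equals $2\delta-2\vert N_H(x)\cap N_H(y)\vert+2\ge 2\delta$, with equality precisely when $x$ and $y$ share a common neighbour. Hence every pair contributes at least $2\delta$, and any pair of adjacent vertices contributes exactly $2\delta$; such a pair exists because $H$ is a non-trivial $\delta$-regular graph with $\delta\ge 1$. Therefore $\mathcal{C}(H_i)=2\delta$ for every $i$, so $\mathcal{C}(\mathcal{H})=2\delta$, and Theorem~\ref{theoKmetricCorona} gives the conclusion.

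The only delicate point — more a crux than an obstacle — is the translation of the girth condition into the bound $\vert N_H(x)\cap N_H(y)\vert\le 1$ together with the triangle-freeness used in the adjacent case, and then confirming that the lower bound $2\delta$ is actually attained; attainment is immediate from adjacent pairs, which exist since each $H_i$ is non-trivial and regular of positive degree.
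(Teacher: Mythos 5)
Your proof is correct and follows essentially the same route as the paper: the authors also observe that $g(H_i)\ge 5$ forces $\vert N_{H_i}(x)\cap N_{H_i}(y)\vert\le 1$ and then deduce the result directly from Theorem~\ref{theoKmetricCorona}, leaving the regularity computation and the case split on adjacency implicit. Your write-up merely makes those details (including the attainment of the value $2\delta$ by adjacent pairs) explicit.
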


We would point out the following particular case of Corollary \ref{cuello-ge5}.

\begin{remark}
Let $G$ be a connected non-trivial graph. Then, for $n\ge 5$, the graph $G\odot C_n$ is $4$-metric dimensional.
\end{remark}

An \textit{end-vertex} of a graph $H$ is a vertex of degree one and a \textit{support vertex} is a vertex that is adjacent to an end-vertex. If $x\in V(H)$ is an end-vertex and $y\in V(H)$ is a support vertex of degree two which is adjacent to $x$, then $\vert N_H(x)\triangledown N_H(y)\cup \{x,y\}\vert =3$. Thus, from Corollary \ref{coro2dimensional} and  Theorem \ref{theoKmetricCorona} we deduce the following result.

\begin{proposition}
Let $G$ be  a connected non-trivial graph of order $n$ and  let $\mathcal{H}$ be a family of $n$ connected non-trivial graphs such that no graph belonging to $\mathcal{H}$ has twin vertices. If there exists $H\in \mathcal{H}$, having an end-vertex whose support vertex has degree two, then $G\odot \mathcal{H}$ is  a $3$-metric dimensional graph.
\end{proposition}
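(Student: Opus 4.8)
The plan is to invoke Theorem~\ref{theoKmetricCorona}, which reduces the claim to showing $\mathcal{C}(\mathcal{H})=3$, and then to establish the two inequalities $\mathcal{C}(\mathcal{H})\ge 3$ and $\mathcal{C}(\mathcal{H})\le 3$ separately.

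For the lower bound I would fix an arbitrary $H_i\in\mathcal{H}$ and two distinct vertices $x,y\in V(H_i)$, and set $A=(N_{H_i}(x)\triangledown N_{H_i}(y))\cup\{x,y\}$. Since $\{x,y\}\subseteq A$, one always has $|A|\ge 2$, and $|A|=2$ precisely when $N_{H_i}(x)\triangledown N_{H_i}(y)\subseteq\{x,y\}$. I would then settle the key point by a short case check on whether $x\sim y$: if $x\sim y$ then $x,y\in N_{H_i}(x)\triangledown N_{H_i}(y)$, so the inclusion forces $N_{H_i}[x]=N_{H_i}[y]$; if $x\not\sim y$ then $x,y\notin N_{H_i}(x)\triangledown N_{H_i}(y)$, so the inclusion forces $N_{H_i}(x)=N_{H_i}(y)$. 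Either way, $|A|=2$ holds if and only if $x,y$ are twins in $H_i$ (this is, in essence, the remark that $x,y$ are twins iff $\mathcal{D}_{H_i}(x,y)=\{x,y\}$, combined with Case~4 of the proof of Theorem~\ref{theoKmetricCorona}). As no member of $\mathcal{H}$ has twin vertices, $|A|\ge 3$ for every such pair, whence $\mathcal{C}(H_i)\ge 3$ for all $i$ and therefore $\mathcal{C}(\mathcal{H})\ge 3$.

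For the upper bound I would use the distinguished graph $H\in\mathcal{H}$: let $x$ be one of its end-vertices and $y$ the support vertex adjacent to $x$, which has degree two. Then $N_H(x)=\{y\}$ and $N_H(y)=\{x,z\}$, where $z$ is the unique neighbor of $y$ other than $x$; since $z\neq x$, $z\neq y$, and $z\notin N_H(x)$, this gives $N_H(x)\triangledown N_H(y)=\{x,y,z\}$ and hence $|(N_H(x)\triangledown N_H(y))\cup\{x,y\}|=3$. Therefore $\mathcal{C}(H)\le 3$, so $\mathcal{C}(\mathcal{H})\le 3$.

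Combining the two bounds gives $\mathcal{C}(\mathcal{H})=3$, and Theorem~\ref{theoKmetricCorona} then yields that $G\odot\mathcal{H}$ is a $3$-metric dimensional graph. I do not expect a serious obstacle here: the only step requiring some care is the characterization ``$|A|=2$ if and only if $x,y$ are twins'' in the lower bound, which is precisely where the twin-free hypothesis on $\mathcal{H}$ enters; the upper bound is a one-line neighborhood computation for an end-vertex attached to a degree-two support vertex.
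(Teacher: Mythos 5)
Your proposal is correct and follows essentially the same route as the paper: the paper's (very brief) justification is precisely the computation $\vert N_H(x)\triangledown N_H(y)\cup\{x,y\}\vert=3$ for the end-vertex/degree-two support pair, combined with the twin-free hypothesis (via Corollary \ref{coro2dimensional}) to rule out $\mathcal{C}(\mathcal{H})=2$, and then Theorem \ref{theoKmetricCorona}. Your write-up simply makes explicit the case check showing that $\vert(N_{H_i}(x)\triangledown N_{H_i}(y))\cup\{x,y\}\vert=2$ if and only if $x,y$ are twins, which the paper leaves implicit.
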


An interesting particular case of the result above is when the family $ \mathcal{H}$ contains a path $P_r$ of order $r\ge 4$ and  no graph belonging to $\mathcal{H}$ has twin vertices.
In such a case $G\odot \mathcal{H}$ is  a $3$-metric dimensional graph.

\subsection{$k$-metric dimensional graphs of the form  $K_1+ H$}

\begin{proposition}\label{kMectricDimK1H}
Let $H$ be a graph of order $n'\ge 2$ and maximum degree $\Delta(H)$. The graph $K_1+ H$ is $k$-metric dimensional if and only if $k=\min\{\mathcal{C}(H),n'-\Delta(H)+1\}$.
\end{proposition}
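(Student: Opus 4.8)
The plan is to apply Theorem~\ref{theokmetric}: since $K_1+H$ is connected of order $n'+1$ (and has diameter at most $2$), it suffices to prove that
$\displaystyle\min_{x,y\in V(K_1+H)}\{|\mathcal{D}_{K_1+H}(x,y)|\}=\min\{\mathcal{C}(H),\,n'-\Delta(H)+1\}$.
Write $v$ for the vertex of $K_1$, so that $v$ is adjacent to every vertex of $H$ in $K_1+H$. I would split the pairs of distinct vertices of $K_1+H$ into the two possible families: those with both endpoints in $V(H)$, and those of the form $\{v,y\}$ with $y\in V(H)$; these exhaust all pairs since $|V(K_1)|=1$.

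First, consider a pair $x,y\in V(H)$. Because $D(K_1+H)\le 2$, we have $\mathcal{D}_{K_1+H}(x,y)=\bigl(N_{K_1+H}(x)\triangledown N_{K_1+H}(y)\bigr)\cup\{x,y\}$. Since $v$ is a common neighbour of $x$ and $y$, it does not lie in the symmetric difference, so $N_{K_1+H}(x)\triangledown N_{K_1+H}(y)=N_{H}(x)\triangledown N_{H}(y)$, and hence $|\mathcal{D}_{K_1+H}(x,y)|=|(N_{H}(x)\triangledown N_{H}(y))\cup\{x,y\}|$. Minimising this over all $x,y\in V(H)$ yields exactly $\mathcal{C}(H)$, by definition of $\mathcal{C}(H)$; this requires only $n'\ge 2$ so that at least one such pair exists.

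Second, consider a pair $\{v,y\}$ with $y\in V(H)$. Here I would compute $\mathcal{D}_{K_1+H}(v,y)$ vertex by vertex, using that $d(v,z)=1$ for every $z\in V(H)$, while $d(y,z)=0$ if $z=y$, $d(y,z)=1$ if $z\in N_H(y)$, and $d(y,z)=2$ if $z\in V(H)-N_H[y]$. Thus $z$ is distinctive for the pair $v,y$ precisely when $z\in\{v,y\}$ or $z\in V(H)-N_H[y]$, i.e. $\mathcal{D}_{K_1+H}(v,y)=V(K_1+H)-N_H(y)$, and therefore $|\mathcal{D}_{K_1+H}(v,y)|=n'+1-|N_H(y)|$. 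Minimising over $y\in V(H)$ gives $n'+1-\Delta(H)=n'-\Delta(H)+1$.

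Combining the two families, $\displaystyle\min_{x,y\in V(K_1+H)}\{|\mathcal{D}_{K_1+H}(x,y)|\}=\min\{\mathcal{C}(H),\,n'-\Delta(H)+1\}$, and Theorem~\ref{theokmetric} then gives the stated characterisation. The argument is essentially routine; the only two points needing a little care are the cancellation of the universal vertex $v$ in the symmetric difference for pairs inside $V(H)$, and the explicit distance bookkeeping from $y$ in the $\{v,y\}$ case — neither constitutes a genuine obstacle.
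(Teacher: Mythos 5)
Your proposal is correct and follows essentially the same route as the paper: both apply Theorem~\ref{theokmetric} after computing $\mathcal{D}_{K_1+H}(x,y)$ for the two kinds of pairs (both vertices in $V(H)$, or one equal to the vertex $v$ of $K_1$), obtaining $(N_H(x)\triangledown N_H(y))\cup\{x,y\}$ and $(V(H)-N_H(y))\cup\{v\}$ respectively, whose minimum cardinalities are $\mathcal{C}(H)$ and $n'-\Delta(H)+1$. The paper states these two identities without justification, so your write-up is simply a more detailed version of the same argument.
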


\begin{proof}
Let $v$ be the vertex of $K_1$. Now, let $x,y$ be two different vertices of $K_1+ H$. If $x,y\in V(H)$, then $\mathcal{D}_{K_1+H}(x,y)=N_H(x)\triangledown N_H(y)\cup \{x,y\}$. If $x=v$ and $y\in V(H)$, then $\mathcal{D}_{K_1+H}(x,y)=(V(H)-N_{H}(y))\cup \{x\}$. Therefore, by Theorem \ref{theokmetric}, the result follows.
\end{proof}

We next point out some consequences of Proposition \ref{kMectricDimK1H}.

\begin{corollary}\label{H,K_1+H}
Let $H$ be a non-trivial graph. If $H$ is $k$-metric dimensional and    $K_1+H$ is $k'$-metric dimensional, then $k'\le k$.
\end{corollary}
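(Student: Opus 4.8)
The plan is to combine the two characterizations that are already available: Theorem~\ref{theokmetric}, which gives $k=\min_{x,y\in V(H)}\{|\mathcal{D}_H(x,y)|\}$ since $H$ is $k$-metric dimensional, and Proposition~\ref{kMectricDimK1H}, which gives $k'=\min\{\mathcal{C}(H),n'-\Delta(H)+1\}$ since $K_1+H$ is $k'$-metric dimensional (here $n'$ is the order of $H$). Thus the statement $k'\le k$ reduces to showing $\min\{\mathcal{C}(H),n'-\Delta(H)+1\}\le \min_{x,y\in V(H)}\{|\mathcal{D}_H(x,y)|\}$, and for this it clearly suffices to prove the stronger inequality $\mathcal{C}(H)\le|\mathcal{D}_H(x,y)|$ for every pair of distinct vertices $x,y\in V(H)$.

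The key step is the set inclusion $\mathcal{D}_H(x,y)\supseteq \bigl(N_H(x)\triangledown N_H(y)\bigr)\cup\{x,y\}$, which holds for any distinct $x,y\in V(H)$ and is exactly the observation already recorded in the discussion preceding Proposition~\ref{IHkDimThenCoronakDim}. Indeed, $d_H(x,x)=0\ne d_H(y,x)$ and symmetrically for $y$, so $\{x,y\}\subseteq\mathcal{D}_H(x,y)$; and if $z\in N_H(x)\triangledown N_H(y)$ with, say, $z\in N_H(x)\setminus N_H(y)$, then either $z\in\{x,y\}$ (already accounted for) or $d_H(x,z)=1$ while $d_H(y,z)\ge 2$, so $z\in\mathcal{D}_H(x,y)$.

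From this inclusion we get $|\mathcal{D}_H(x,y)|\ge \bigl|\bigl(N_H(x)\triangledown N_H(y)\bigr)\cup\{x,y\}\bigr|\ge \mathcal{C}(H)$ for every pair $x,y$, where the last inequality is just the definition of $\mathcal{C}(H)$ as the minimum of these quantities over all pairs. Taking the minimum over $x,y$ yields $k=\min_{x,y\in V(H)}\{|\mathcal{D}_H(x,y)|\}\ge \mathcal{C}(H)\ge \min\{\mathcal{C}(H),n'-\Delta(H)+1\}=k'$, which is the desired conclusion.

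There is essentially no serious obstacle here: the result is a short corollary whose content is entirely the elementary inclusion in the second paragraph together with bookkeeping of the two formulas. The only point requiring a moment of care is checking the edge case $z\in\{x,y\}$ when verifying the inclusion (equivalently, making sure the $\cup\{x,y\}$ term is not doing the heavy lifting in a misleading way), but this is handled by noting those vertices already lie in $\mathcal{D}_H(x,y)$.
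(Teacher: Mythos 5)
Your proof is correct and follows essentially the same route as the paper: both establish $k'\le\mathcal{C}(H)$ via Proposition~\ref{kMectricDimK1H} and $\mathcal{C}(H)\le k$ via the inclusion $\mathcal{D}_H(x,y)\supseteq (N_H(x)\triangledown N_H(y))\cup\{x,y\}$ together with Theorem~\ref{theokmetric}. The only difference is that you spell out the verification of that inclusion, which the paper takes as already observed.
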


\begin{proof}
By Proposition \ref{kMectricDimK1H} we have that if $K_1+H$ is a $k'$-metric dimensional graph, then  $k'\le \mathcal{C}(H)$. Since, for any $x,y\in V(H)$ we have ${\cal D}_{H}(x,y)\supseteq N_{H}(x)\triangledown N_{H}(y)\cup \{x,y\}$, we deduce that  if $H$ is $k$-metric dimensional, then $\mathcal{C}(H)\le k$ and, as a consequence, $k'\le k$.
\end{proof}

\begin{corollary}
For any connected graph $H$ of order $n'\ge 2$ and maximum degree $n'-1$, the graph $K_1+H$ is $2$-metric dimensional.
\end{corollary}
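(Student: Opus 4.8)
The plan is to invoke Proposition~\ref{kMectricDimK1H} directly and then observe that one of the two quantities appearing in the minimum collapses to $2$, while the other can never drop below $2$, so the minimum is forced to equal $2$.

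First I would fix a connected graph $H$ of order $n'\ge 2$ with $\Delta(H)=n'-1$. By Proposition~\ref{kMectricDimK1H}, the graph $K_1+H$ is $k$-metric dimensional if and only if $k=\min\{\mathcal{C}(H),\,n'-\Delta(H)+1\}$. Substituting $\Delta(H)=n'-1$ into the second term yields $n'-\Delta(H)+1=n'-(n'-1)+1=2$, so that $k=\min\{\mathcal{C}(H),2\}$.

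Next I would show that $\mathcal{C}(H)\ge 2$ for every graph $H$ of order at least $2$. This is immediate from the definition $\mathcal{C}(H)=\min_{x,y\in V(H)}\{\vert N_H(x)\triangledown N_H(y)\cup\{x,y\}\vert\}$: for any pair of distinct vertices $x,y$, the set $N_H(x)\triangledown N_H(y)\cup\{x,y\}$ contains the two distinct elements $x$ and $y$, hence has cardinality at least $2$. Therefore $\min\{\mathcal{C}(H),2\}=2$, and so $K_1+H$ is $2$-metric dimensional, as claimed.

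There is no genuine obstacle here; the only point that merits a moment's attention is the elementary bound $\mathcal{C}(H)\ge 2$ (equivalently, the fact that $K_1+H$ is never $1$-metric dimensional), which is settled at once by unwinding the definition of $\mathcal{C}(H)$. Everything else is just arithmetic with the formula supplied by Proposition~\ref{kMectricDimK1H}.
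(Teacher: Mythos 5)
Your proof is correct and is exactly the argument the paper intends: the corollary is stated as a consequence of Proposition~\ref{kMectricDimK1H}, with $n'-\Delta(H)+1=2$ and the trivial bound $\mathcal{C}(H)\ge 2$ forcing the minimum to be $2$. (The paper also remarks that the same conclusion follows from Corollary~\ref{coro2dimensional}, since a vertex of degree $n'-1$ in $H$ and the vertex of $K_1$ are true twins in $K_1+H$, but your route is the primary one.)
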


Notice that the corollary above may be also derived from Corollary \ref{coro2dimensional}.

\begin{corollary}
Let $H$ be a connected graph of order $n'\ge 4$ and maximum degree $n'-2$. If $H$ does not contain twin vertices, then $K_1+H$ is $3$-metric dimensional.
\end{corollary}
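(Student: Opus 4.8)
The plan is to reduce everything to Proposition~\ref{kMectricDimK1H}, which asserts that $K_1+H$ is $k$-metric dimensional precisely when $k=\min\{\mathcal{C}(H),\,n'-\Delta(H)+1\}$. Since $\Delta(H)=n'-2$ we get $n'-\Delta(H)+1=n'-(n'-2)+1=3$, so $k=\min\{\mathcal{C}(H),3\}$. Hence it suffices to prove that $\mathcal{C}(H)\ge 3$; then the minimum equals $3$ and the corollary follows at once. Note that no hypothesis on $H$ other than connectedness, $n'\ge 4$, $\Delta(H)=n'-2$, and the absence of twins will actually be needed.

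To bound $\mathcal{C}(H)$ from below I would fix an arbitrary pair of distinct vertices $x,y\in V(H)$ and show $\vert N_H(x)\triangledown N_H(y)\cup\{x,y\}\vert\ge 3$, distinguishing two cases according to whether $x\sim y$. If $x\sim y$, then $x\in N_H(y)\setminus N_H(x)$ and $y\in N_H(x)\setminus N_H(y)$ (recall $H$ is simple, so no loops), whence $\{x,y\}\subseteq N_H(x)\triangledown N_H(y)$; because $H$ has no true twins, $N[x]\ne N[y]$, so $N_H(x)\triangledown N_H(y)$ must contain a further vertex and the set in question has cardinality at least $3$. If $x\not\sim y$, then $x,y\notin N_H(x)\triangledown N_H(y)$, so $\vert N_H(x)\triangledown N_H(y)\cup\{x,y\}\vert=\vert N_H(x)\triangledown N_H(y)\vert+2$; since $H$ has no false twins, $N_H(x)\ne N_H(y)$, so this is again at least $3$. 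Taking the minimum over all such pairs gives $\mathcal{C}(H)\ge 3$.

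Combining the two observations, $\min\{\mathcal{C}(H),3\}=3$, so Proposition~\ref{kMectricDimK1H} yields that $K_1+H$ is $3$-metric dimensional. The only real step is the case analysis in the second paragraph, and even that is routine once one recalls the description of twin vertices via symmetric differences of neighborhoods recorded just after Theorem~\ref{theokmetric}; the rest is bookkeeping. I do not anticipate any genuine obstacle.
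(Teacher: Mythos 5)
Your proposal is correct and follows essentially the same route as the paper: reduce to Proposition~\ref{kMectricDimK1H}, note $n'-\Delta(H)+1=3$, and show $\mathcal{C}(H)\ge 3$ from the absence of twins. Your two-case analysis (adjacent vs.\ non-adjacent) is just a slightly more explicit spelling-out of the paper's one-line observation that twin-freeness forces a vertex $z\notin\{x,y\}$ in $N_H(x)\triangledown N_H(y)$.
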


\begin{proof}
Since $H$ does not contain twin vertices, for every $x,y\in V(H)$ there exists $z\in V(H)-\{x,y\}$ such that $z\in N(x)\triangledown N(y)$. Thus, $\mathcal{C}(H)\ge 3$. Now, since $n'-\Delta(H)+1=3$, by Proposition \ref{kMectricDimK1H} we can deduce the result.
\end{proof}

The {\em wheel graph} $W_{1,n}$ is the join  graph $K_1+C_n$ and the {\em fan graph} $F_{1,n}$ is the join graph $K_1+P_n$.

\begin{corollary}\label{dimensional-fan-wheel}
For any $n\ge 4$, the fan graph  $F_{1,n}$ is $3$-metric dimensional, and for any $n\ge 5$, the wheel graph  $W_{1,n}$ is $4$-metric dimensional.
\end{corollary}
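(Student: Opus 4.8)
The plan is to apply Proposition \ref{kMectricDimK1H} directly, since $F_{1,n}=K_1+P_n$ and $W_{1,n}=K_1+C_n$. The whole task then reduces to evaluating $\mathcal{C}(P_n)$, $\mathcal{C}(C_n)$, and the quantities $n'-\Delta(H)+1$ appearing in that proposition.

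For the fan graph with $n\ge 4$, here $n'=n$ and $\Delta(P_n)=2$, so $n'-\Delta(P_n)+1=n-1\ge 3$, and it remains to check that $\mathcal{C}(P_n)=3$. For the upper bound, take an end-vertex $x$ of $P_n$ together with its (unique) support vertex $y$, which has degree $2$ because $n\ge 3$; then $|N_{P_n}(x)\triangledown N_{P_n}(y)\cup\{x,y\}|=3$, so $\mathcal{C}(P_n)\le 3$. For the lower bound, recall from the discussion following Corollary \ref{coro2dimensional} that two vertices $x,y$ of a graph are twins if and only if $|N(x)\triangledown N(y)\cup\{x,y\}|=2$; since $P_n$ has no twin vertices for $n\ge 4$, this forces $\mathcal{C}(P_n)\ge 3$. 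Hence $\mathcal{C}(P_n)=3$, and Proposition \ref{kMectricDimK1H} gives that $F_{1,n}$ is $k$-metric dimensional with $k=\min\{3,\,n-1\}=3$.

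For the wheel graph with $n\ge 5$, here $n'=n$ and $\Delta(C_n)=2$, so $n'-\Delta(C_n)+1=n-1\ge 4$. Since $C_n$ is $2$-regular and $g(C_n)=n\ge 5$, Corollary \ref{cuello-ge5} applied to $G\odot C_n$ (together with Theorem \ref{theoKmetricCorona}) yields $\mathcal{C}(C_n)=2\cdot 2=4$; alternatively this is a short direct computation showing that $\min_{x,y\in V(C_n)}|N_{C_n}(x)\triangledown N_{C_n}(y)\cup\{x,y\}|=4$, the minimum being attained at adjacent pairs. Applying Proposition \ref{kMectricDimK1H} again, $W_{1,n}$ is $k$-metric dimensional with $k=\min\{4,\,n-1\}=4$.

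Everything here reduces to elementary computations on paths and cycles that are essentially contained in the preceding results, so no substantial obstacle is expected. The only point deserving a moment's care is the lower bound $\mathcal{C}(C_n)\ge 4$ for $n\ge 5$: one must rule out pairs $x,y\in V(C_n)$ with $|N_{C_n}(x)\triangledown N_{C_n}(y)\cup\{x,y\}|\le 3$, and this is precisely where the hypothesis $n\ge 5$ (girth at least $5$, hence no twins and no common neighbour of two adjacent vertices) is used.
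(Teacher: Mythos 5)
Your proof is correct and follows exactly the route the paper intends: the corollary is stated without proof as an immediate consequence of Proposition \ref{kMectricDimK1H}, and you simply supply the straightforward computations $\mathcal{C}(P_n)=3$, $\mathcal{C}(C_n)=4$ and $n-\Delta+1=n-1$ that make it so. The details you fill in (twin-freeness of $P_n$ for the lower bound, the end-vertex/support pair for the upper bound, and the girth argument for $C_n$) are all sound and consistent with the surrounding results in the paper.
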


By Corollary \ref{coroKmetricCorona} and Proposition \ref{kMectricDimK1H} we deduce the following remark.

\begin{remark}\label{Comparakenproductoyk1+H}
Let $G$ be a connected graph of order $n\ge 2$ and let $\mathcal{H}$ be a family of $n$ non-trivial connected  graphs.
If for every $H_i\in \mathcal{H}$ the graph $K_1+H_i$ is $k_i$-metric dimensional and $G\odot\mathcal{H}$ is $k$-metric dimensional, then
 $k\ge \displaystyle\min_{i\in \{1,...,n\}}\{k_i\}$.
\end{remark}

We conclude this section with  a property
on the   $(n'-\Delta(H)+1)$-metric bases of  $K_1+H$.

\begin{proposition}\label{propK1Belong}
Let $H$ be a non-trivial graph of order $n'$. If $K_1+H$ is $(n'-\Delta(H)+1)$-metric dimensional, then the vertex of $K_1$ belongs to every $(n'-\Delta(H)+1)$-metric basis of $K_1+H$.
\end{proposition}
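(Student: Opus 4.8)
The plan is to isolate a single pair of vertices of $K_1+H$ whose set of distinctive vertices has cardinality exactly $n'-\Delta(H)+1$ and contains the vertex of $K_1$; from this, every $(n'-\Delta(H)+1)$-metric generator is forced to contain that whole set, and in particular the vertex of $K_1$.

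First I would fix notation: let $v$ denote the vertex of $K_1$ and write $k=n'-\Delta(H)+1$. Since $K_1+H$ is assumed to be $k$-metric dimensional, a $k$-metric basis exists, and by Proposition \ref{kMectricDimK1H} we moreover have $k\le \mathcal{C}(H)$ (this last fact is not strictly needed, but it records that $k$ is indeed the governing value). Next I would pick a vertex $u\in V(H)$ of maximum degree, so that $|N_H(u)|=\Delta(H)$, and recall the distance computation already carried out in the proof of Proposition \ref{kMectricDimK1H}: for the pair $x=v$, $y=u$ one has $\mathcal{D}_{K_1+H}(v,u)=(V(H)-N_H(u))\cup\{v\}$. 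Hence $|\mathcal{D}_{K_1+H}(v,u)|=n'-\Delta(H)+1=k$, and $v\in\mathcal{D}_{K_1+H}(v,u)$ since $d_{K_1+H}(v,v)=0\ne 1=d_{K_1+H}(u,v)$.

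Finally I would invoke the defining property of a $k$-metric generator. Let $S$ be an arbitrary $(n'-\Delta(H)+1)$-metric basis of $K_1+H$. Then the pair $v,u$ must be distinguished by at least $k$ elements of $S$, i.e. $|S\cap\mathcal{D}_{K_1+H}(v,u)|\ge k$. Because $|\mathcal{D}_{K_1+H}(v,u)|=k$, this forces $\mathcal{D}_{K_1+H}(v,u)\subseteq S$, and in particular $v\in S$. Since $S$ was an arbitrary $(n'-\Delta(H)+1)$-metric basis, the claim follows. There is no substantial obstacle in this argument; the only point requiring care is the identity $\mathcal{D}_{K_1+H}(v,u)=(V(H)-N_H(u))\cup\{v\}$, which is exactly the computation underlying Proposition \ref{kMectricDimK1H} and pins down that the extremal value $n'-\Delta(H)+1$ is attained by a pair involving $v$.
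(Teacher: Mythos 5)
Your proof is correct and follows essentially the same route as the paper: both identify a maximum-degree vertex $u$ of $H$, compute $\mathcal{D}_{K_1+H}(v,u)=(V(H)-N_H(u))\cup\{v\}$ of cardinality exactly $n'-\Delta(H)+1$, and conclude that this whole set, hence $v$, must lie in every $(n'-\Delta(H)+1)$-metric basis (the paper phrases this last step via Lemma \ref{lemmaBelongKBasis}, you via the definition directly).
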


\begin{proof}
Let $v$ be the vertex of $K_1$.  Notice that for every $x\in V(H)$, we have $$\mathcal{D}_{K_1+H}(x,v)=\left(V(H)-N_H(x)\right)\cup \{v\}.$$
For every $x\in V(H)$ such that $N_H(x)=\Delta(H)$ we have that $n'-\Delta(H)+1=|\left(V(H)-N(x)\right)\cup \{v\}|=|\mathcal{D}_{K_1+H}(x,v)|$. Thus, for any $(n'-\Delta(H)+1)$-metric basis $B$ we have  $\mathcal{D}_{K_1+H}(x,v)\subseteq B$ and, since $v\in \mathcal{D}_{K_1+H}(x,v)$, we conclude that $v\in B$.
\end{proof}

By Proposition \ref{propK1Belong} we deduce that if the vertex of $K_1$ does not belong to any $k$-metric basis of $K_1+H$, then $K_1+H$ is not $(n'-\Delta(H)+1)$-metric dimensional. Thus, by Proposition \ref{kMectricDimK1H} we obtain the following result.

\begin{lemma}\label{K1_H_is_CH_dimensional}
Let $H$ be a non-trivial graph. If the vertex of $K_1$ does not belong to any $k$-metric basis of $K_1+H$, then $K_1+H$ is $\mathcal{C}(H)$-metric dimensional.
\end{lemma}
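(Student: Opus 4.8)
The plan is to combine Proposition~\ref{kMectricDimK1H} with Proposition~\ref{propK1Belong} by contraposition. By Proposition~\ref{kMectricDimK1H}, the graph $K_1+H$ is $k$-metric dimensional with $k=\min\{\mathcal{C}(H),\,n'-\Delta(H)+1\}$, where $n'$ is the order of $H$; thus the only two possibilities are that $K_1+H$ is $\mathcal{C}(H)$-metric dimensional or that it is $(n'-\Delta(H)+1)$-metric dimensional (these coincide when the two quantities are equal, but in any case $k$ equals one of them). The hypothesis is that the vertex $v$ of $K_1$ belongs to no $k$-metric basis of $K_1+H$.

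First I would rule out the second possibility. Suppose, for contradiction, that $K_1+H$ is $(n'-\Delta(H)+1)$-metric dimensional. Then by Proposition~\ref{propK1Belong} the vertex $v$ of $K_1$ belongs to every $(n'-\Delta(H)+1)$-metric basis of $K_1+H$; that is, $v$ lies in every $k$-metric basis, since here $k=n'-\Delta(H)+1$. This directly contradicts the hypothesis that $v$ belongs to no $k$-metric basis. Hence $K_1+H$ is not $(n'-\Delta(H)+1)$-metric dimensional.

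Since, by Proposition~\ref{kMectricDimK1H}, the graph $K_1+H$ must be $k$-metric dimensional for $k=\min\{\mathcal{C}(H),\,n'-\Delta(H)+1\}$, and we have just excluded the value $n'-\Delta(H)+1$ as the attained one, the remaining case forces $k=\mathcal{C}(H)$, i.e. $K_1+H$ is $\mathcal{C}(H)$-metric dimensional, which is the claim. I do not anticipate any genuine obstacle here: the argument is a short contrapositive assembly of two previously established facts, and the only point requiring a moment of care is the degenerate situation $\mathcal{C}(H)=n'-\Delta(H)+1$, where the two labels name the same integer and the conclusion holds trivially; this does not disrupt the deduction because Proposition~\ref{propK1Belong}'s conclusion ("$v$ is in every $k$-metric basis") is still contradicted.
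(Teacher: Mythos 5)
Your argument is correct and is essentially the paper's own: the paper derives this lemma in the sentence preceding it by applying the contrapositive of Proposition~\ref{propK1Belong} to rule out the value $n'-\Delta(H)+1$, and then invoking Proposition~\ref{kMectricDimK1H} to conclude that the dimensional value must be $\mathcal{C}(H)$. Your additional remark about the degenerate case $\mathcal{C}(H)=n'-\Delta(H)+1$ is a harmless elaboration of the same reasoning.
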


\section{The $k$-metric dimension of corona product graphs}
\label{KmetricDimCoronaGraphs}

Once we have presented several results on $k$-metric dimensional corona graphs, in this section we compute or bound the $k$-metric dimension of  corona graphs. To do so, we need to introduce the necessary terminology and some useful tools like the following straightforward lemma.

\begin{lemma}\label{lemmaBelongKBasis}
Let $G$ be a connected graph and let $x,y\in V(G)$. If  $B$ is a $k$-metric basis  of $G$ and $\vert {\cal D}_G(x,y)\vert =k$, then  ${\cal D}_G(x,y)\subseteq B.$
\end{lemma}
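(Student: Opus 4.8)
The plan is to prove the contrapositive phrased directly: assuming $B$ is a $k$-metric basis of $G$ and that $\vert\mathcal{D}_G(x,y)\vert = k$, I want to show every vertex of $\mathcal{D}_G(x,y)$ lies in $B$. The starting observation is that since $B$ is a $k$-metric generator, the pair $x,y$ must be distinguished by at least $k$ members of $B$; that is, $\vert B \cap \mathcal{D}_G(x,y)\vert \ge k$. But $\mathcal{D}_G(x,y)$ itself has exactly $k$ elements, so $B \cap \mathcal{D}_G(x,y)$ is a subset of a $k$-element set having cardinality at least $k$, forcing $B \cap \mathcal{D}_G(x,y) = \mathcal{D}_G(x,y)$, i.e. $\mathcal{D}_G(x,y)\subseteq B$.

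The only point that needs a sentence of justification is the first claim, that a $k$-metric generator $B$ satisfies $\vert B \cap \mathcal{D}_G(x,y)\vert \ge k$ for every pair of distinct vertices $x,y$. This is essentially the definition: $B$ distinguishes $x,y$ by at least $k$ vertices $w_1,\dots,w_k \in B$ with $d_G(x,w_i)\ne d_G(y,w_i)$, and by the definition of $\mathcal{D}_G(x,y)$ each such $w_i$ belongs to $\mathcal{D}_G(x,y)$; since the $w_i$ are distinct this gives $k$ distinct elements of $B\cap\mathcal{D}_G(x,y)$. One should note $x,y$ are indeed distinct, which is implicit (if $x=y$ the statement is vacuous or trivial), and that a $k$-metric basis is in particular a $k$-metric generator.

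There is no real obstacle here — the lemma is, as the authors say, straightforward — so the proof is just the two-line counting argument above, written out carefully. I would present it roughly as follows.

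\begin{proof}
Since $B$ is a $k$-metric basis of $G$, it is in particular a $k$-metric generator, so the pair $x,y$ is distinguished by at least $k$ distinct vertices $w_1,\dots,w_k\in B$; that is, $d_G(x,w_i)\ne d_G(y,w_i)$ for every $i\in\{1,\dots,k\}$. By the definition of $\mathcal{D}_G(x,y)$, each $w_i$ belongs to $\mathcal{D}_G(x,y)$, and hence $\{w_1,\dots,w_k\}\subseteq B\cap \mathcal{D}_G(x,y)$. Therefore $\vert B\cap \mathcal{D}_G(x,y)\vert\ge k$. On the other hand, by hypothesis $\vert \mathcal{D}_G(x,y)\vert=k$, so $B\cap \mathcal{D}_G(x,y)$ is a subset of the $k$-element set $\mathcal{D}_G(x,y)$ of cardinality at least $k$. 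Consequently $B\cap \mathcal{D}_G(x,y)=\mathcal{D}_G(x,y)$, which means $\mathcal{D}_G(x,y)\subseteq B$.
\end{proof}
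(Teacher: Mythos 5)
Your proof is correct and is exactly the counting argument the authors have in mind: the paper states this lemma without proof, calling it ``straightforward,'' and the intended justification is precisely that $\vert B\cap\mathcal{D}_G(x,y)\vert\ge k=\vert\mathcal{D}_G(x,y)\vert$ forces the inclusion. Nothing further is needed.
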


Given a $k$-metric dimensional graph $G$, we define  ${\cal D}_k(G)$ as the set obtained by the union of the sets of distinctive vertices ${\cal D}_G(x,y)$ whenever $\vert{\cal D}_G(x,y)\vert=k$, {\it i.e.}, $${\cal D}_k(G)=\bigcup_{\vert {\cal D}_G(x,y)\vert=k}{\cal D}_G(x,y).$$

\begin{corollary}\label{corollaryBelongKBasis}
Let $G$ be a $k$-metric dimensional graph. For any $k$-metric basis $B$ of a graph $G$ it holds ${\cal D}_k(G)\subseteq B .$
\end{corollary}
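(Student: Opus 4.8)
The plan is to derive the corollary directly from Lemma~\ref{lemmaBelongKBasis} by taking a union over the appropriate pairs of vertices. First I would fix an arbitrary $k$-metric basis $B$ of the $k$-metric dimensional graph $G$. By the definition of ${\cal D}_k(G)$, it suffices to show that for every pair $x,y\in V(G)$ with $\vert{\cal D}_G(x,y)\vert=k$ we have ${\cal D}_G(x,y)\subseteq B$; then taking the union over all such pairs yields ${\cal D}_k(G)\subseteq B$.

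The key step is an application of Lemma~\ref{lemmaBelongKBasis}: since $B$ is a $k$-metric basis and $x,y$ is a pair with $\vert{\cal D}_G(x,y)\vert=k$, the lemma immediately gives ${\cal D}_G(x,y)\subseteq B$. Since this holds for every pair attaining the minimum value $k$, we conclude
$${\cal D}_k(G)=\bigcup_{\vert{\cal D}_G(x,y)\vert=k}{\cal D}_G(x,y)\subseteq B.$$
One should note in passing that at least one such pair exists: by Theorem~\ref{theokmetric}, $G$ being $k$-metric dimensional means $k=\min_{x,y\in V(G)}\{\vert{\cal D}_G(x,y)\vert\}$, so the minimum is attained and the index set of the union is non-empty (hence ${\cal D}_k(G)$ is well defined and non-empty).

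There is essentially no obstacle here: the statement is an immediate corollary, and the only thing to be careful about is making explicit that the union ranges over a non-empty family and that Lemma~\ref{lemmaBelongKBasis} applies verbatim to each member of that family. The argument is short enough that it can be presented in a couple of sentences.

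\begin{proof}
Let $B$ be a $k$-metric basis of $G$. Since $G$ is $k$-metric dimensional, by Theorem~\ref{theokmetric} we have $k=\displaystyle\min_{x,y\in V(G)}\{\vert{\cal D}_G(x,y)\vert\}$, so there exists at least one pair $x,y\in V(G)$ with $\vert{\cal D}_G(x,y)\vert=k$, and ${\cal D}_k(G)$ is the union of the sets ${\cal D}_G(x,y)$ over all such pairs. For each such pair, Lemma~\ref{lemmaBelongKBasis} gives ${\cal D}_G(x,y)\subseteq B$. Taking the union over all pairs $x,y$ with $\vert{\cal D}_G(x,y)\vert=k$, we obtain ${\cal D}_k(G)\subseteq B$, as desired.
\end{proof}
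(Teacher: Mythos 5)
Your proof is correct and follows exactly the route the paper intends: the corollary is stated without proof there, being an immediate consequence of Lemma~\ref{lemmaBelongKBasis} applied to each pair with $\vert{\cal D}_G(x,y)\vert=k$ and the definition of ${\cal D}_k(G)$ as the union over such pairs. Your added remark that the index set is non-empty (via Theorem~\ref{theokmetric}) is a harmless extra precision.
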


\begin{theorem}\label{theoDimkn} {\rm \cite{Estrada-Moreno2013}}
Let $G$ be a $k$-metric dimensional graph of order $n$. Then $\dim_k(G)=n$ if and only if $V(G)={\cal D}_k(G)$.
\end{theorem}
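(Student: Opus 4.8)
The plan is to prove the two implications separately, drawing on Corollary~\ref{corollaryBelongKBasis} for one direction and on a vertex-deletion argument for the other.

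\emph{Sufficiency.} Suppose $V(G)={\cal D}_k(G)$. By Corollary~\ref{corollaryBelongKBasis}, every $k$-metric basis $B$ of $G$ satisfies ${\cal D}_k(G)\subseteq B$, hence $V(G)={\cal D}_k(G)\subseteq B\subseteq V(G)$, so $B=V(G)$ and $\dim_k(G)=n$. This direction is immediate.

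\emph{Necessity (by contraposition).} Suppose $V(G)\neq{\cal D}_k(G)$; I would exhibit a $k$-metric generator of cardinality $n-1$, which forces $\dim_k(G)\le n-1<n$. Fix a vertex $w\in V(G)-{\cal D}_k(G)$ and set $S=V(G)-\{w\}$. To check that $S$ is a $k$-metric generator, let $x,y$ be any two distinct vertices of $G$; it suffices to show $\vert{\cal D}_G(x,y)\cap S\vert\ge k$, equivalently $\vert{\cal D}_G(x,y)-\{w\}\vert\ge k$. Since $G$ is $k$-metric dimensional, Theorem~\ref{theokmetric} gives $\vert{\cal D}_G(x,y)\vert\ge k$. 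If $w\notin{\cal D}_G(x,y)$ we are done at once. If $w\in{\cal D}_G(x,y)$, then $\vert{\cal D}_G(x,y)\vert\neq k$: otherwise ${\cal D}_G(x,y)$ would be one of the sets whose union defines ${\cal D}_k(G)$, forcing $w\in{\cal D}_k(G)$, a contradiction. Hence $\vert{\cal D}_G(x,y)\vert\ge k+1$ and $\vert{\cal D}_G(x,y)-\{w\}\vert\ge k$, as required. Thus $S$ is a $k$-metric generator with $\vert S\vert=n-1$, and $\dim_k(G)<n$.

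The only slightly delicate point --- the ``hard part'', such as it is --- is the observation that deleting a vertex outside ${\cal D}_k(G)$ cannot destroy the $k$-distinguishing condition: the pairs that are in danger are precisely those resolved by exactly $k$ vertices, and by definition of ${\cal D}_k(G)$ all distinctive vertices of such a pair lie in ${\cal D}_k(G)$, so none of them equals $w$. Once this is noticed, both directions reduce to elementary counting, so I do not expect any real obstacle beyond keeping the case analysis tidy.
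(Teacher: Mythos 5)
Your proof is correct: the sufficiency direction follows immediately from Corollary~\ref{corollaryBelongKBasis}, and the vertex-deletion argument for necessity is sound, the key point being exactly the one you isolate --- a pair with $\vert{\cal D}_G(x,y)\vert=k$ has all its distinctive vertices inside ${\cal D}_k(G)$, so removing $w\notin{\cal D}_k(G)$ still leaves at least $k$ distinctive vertices for every pair. Note that the paper itself gives no proof of this theorem (it is imported from \cite{Estrada-Moreno2013}), but your argument is the natural one and there is no gap.
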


\begin{corollary}\label{remarkDim2n}{\rm \cite{Estrada-Moreno2013}}
Let $G$ be a connected graph of order $n\geq 2$. Then $\dim_{2}(G)=n$ if and only if every vertex is a twin.
\end{corollary}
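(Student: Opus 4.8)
The plan is to reduce the statement to Theorem~\ref{theoDimkn} applied with $k=2$, using Corollary~\ref{coro2dimensional} to control when $G$ is $2$-metric dimensional, and to supply a direct argument for the converse so as to cover graphs with no twin vertices (for which Theorem~\ref{theoDimkn} cannot be quoted verbatim with $k=2$). Note first that $\dim_2(G)$ always makes sense, since every graph is $k'$-metric dimensional for some $k'\ge 2$.

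For the sufficiency, suppose every vertex of $G$ is a twin. In particular $G$ has twin vertices, so by Corollary~\ref{coro2dimensional} the graph $G$ is $2$-metric dimensional and Theorem~\ref{theoDimkn} applies with $k=2$. Given $u\in V(G)$, choose $u'$ with $N(u)=N(u')$ or $N[u]=N[u']$; then ${\cal D}_G(u,u')=\{u,u'\}$, so $|{\cal D}_G(u,u')|=2$ and hence $u\in{\cal D}_2(G)$. As $u$ was arbitrary, $V(G)={\cal D}_2(G)$, and Theorem~\ref{theoDimkn} yields $\dim_2(G)=n$.

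For the necessity I would argue by contraposition: assume some vertex $v$ is not a twin and show $\dim_2(G)\le n-1$ by verifying that $S=V(G)\setminus\{v\}$ is a $2$-metric generator. Any two distinct vertices lying in $S$ are distinguished by themselves, i.e.\ by at least two elements of $S$. For a pair $\{v,y\}$ with $y\in S$, the fact that $v$ is not a twin gives $|{\cal D}_G(v,y)|\ge 3$; since $y\in{\cal D}_G(v,y)$ and ${\cal D}_G(v,y)\setminus\{v\}\subseteq S$ has at least two elements, the pair $\{v,y\}$ is distinguished by at least two elements of $S$. Hence $S$ is a $2$-metric generator of cardinality $n-1$, so $\dim_2(G)<n$.

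The only real obstacle is the subtlety that Theorem~\ref{theoDimkn} is stated for a graph that is \emph{exactly} $k$-metric dimensional, so it does not apply directly for $k=2$ when $G$ is $k'$-metric dimensional with $k'\ge 3$; the direct construction above side-steps this, working uniformly regardless of whether $G$ has twins. If one prefers to stay within the ${\cal D}_2$ machinery, the necessity can instead be closed whenever $G$ does have twins by observing that any pair $x,y$ with $|{\cal D}_G(x,y)|=2$ must be a twin pair, so a non-twin vertex cannot lie in ${\cal D}_2(G)$, whence $V(G)\ne{\cal D}_2(G)$ and Theorem~\ref{theoDimkn} gives $\dim_2(G)<n$.
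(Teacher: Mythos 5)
Your proof is correct. The paper imports this corollary from \cite{Estrada-Moreno2013} without writing out a proof; the intended derivation, given its placement, is exactly your sufficiency argument: twin pairs $x,y$ satisfy ${\cal D}_G(x,y)=\{x,y\}$, so if every vertex is a twin then $G$ is $2$-metric dimensional by Corollary \ref{coro2dimensional}, $V(G)={\cal D}_2(G)$, and Theorem \ref{theoDimkn} gives $\dim_2(G)=n$. Where you genuinely add something is the converse: you correctly note that Theorem \ref{theoDimkn} is stated only for graphs that are exactly $k$-metric dimensional, so it cannot be quoted with $k=2$ when $G$ is twin-free, and you close that case with the direct observation that if $v$ is not a twin then $V(G)\setminus\{v\}$ is a $2$-metric generator (pairs inside the set are distinguished by themselves, and any pair $\{v,y\}$ has $|{\cal D}_G(v,y)|\ge 3$, hence at least two distinguishing vertices other than $v$). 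This elementary construction makes the equivalence self-contained and is arguably the cleaner route for the necessity; the ${\cal D}_2$ machinery buys nothing extra here beyond uniformity with the general theorem.
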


\begin{lemma}\label{lemmaKCorona}
Let $G=(V,E)$ be a connected graph of order $n\ge 2$ and let $\mathcal{H}=\{H_1,H_2,...,H_n\}$ be a family of connected non-trivial graphs.  If $G\odot \mathcal{H}$ is $k'$-metric dimensional, then the following assertions hold for any $k\in \{1,...,k'\}.$

\begin{enumerate}[{\rm (i)}]
\item If $u,v\in V_i$, then $d_{G\odot \mathcal{H}}(u,x)=d_{G\odot \mathcal{H}}(v,x)$ for every vertex $x$ of $G\odot \mathcal{H}$ not belonging to $V_i$.
\item If $S$ is a $k$-metric generator for $G\odot \mathcal{H}$, then $\vert V_i\cap S\vert \ge k$ for every $i\in \{1,\ldots, n\}$.
\item If $S$ is a $k$-metric basis of $G\odot \mathcal{H}$, then $V\cap S=\emptyset$.
\item If $S$ is a $k$-metric generator for $G\odot \mathcal{H}$, then for every $i\in \{1,\ldots, n\}$, the set $S\cap V_i$ is a $k$-metric generator for $H_i$.
\end{enumerate}
\end{lemma}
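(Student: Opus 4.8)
The four assertions build on one another, so I would prove them in the stated order. For (i), the key observation is the structure of the corona: every path in $G\odot\mathcal{H}$ between a vertex of $V_i$ and a vertex $x\notin V_i$ must pass through $v_i$, the vertex of $G$ attached to the copy $H_i$. Hence for any $u\in V_i$ one has $d_{G\odot\mathcal{H}}(u,x)=1+d_{G\odot\mathcal{H}}(v_i,x)$ if $x\ne v_i$ (the ``$+1$'' being the edge $uv_i$), and $d_{G\odot\mathcal{H}}(u,v_i)=1$; in either case the value does not depend on which vertex $u\in V_i$ we chose. So $d_{G\odot\mathcal{H}}(u,x)=d_{G\odot\mathcal{H}}(v,x)$ for all $u,v\in V_i$ and all $x\notin V_i$.

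For (ii), fix $i$ and take any two vertices $u,v\in V_i$ that are twins in $H_i$, or more generally any pair realizing a small $\mathcal{D}$-set; actually the cleanest argument uses Case 4 from the proof of Theorem \ref{theoKmetricCorona}: for $u,v\in V_i$ we have $\mathcal{D}_{G\odot\mathcal{H}}(u,v)=(N_{H_i}(u)\triangledown N_{H_i}(v))\cup\{u,v\}\subseteq V_i$. Combined with part (i), no vertex outside $V_i$ distinguishes any pair inside $V_i$, so every distinctive vertex of every pair in $V_i$ lies in $V_i$. Since $S$ is a $k$-metric generator, each pair in $V_i$ must be distinguished by at least $k$ vertices of $S$, and all of these lie in $V_i$; taking a pair whose $\mathcal{D}$-set has size $\mathcal{C}(H_i)=$ the $k_i$ for which $H_i$ is $k_i$-metric dimensional — but more simply, taking \emph{any} pair — forces $|S\cap V_i|\ge k$. (If $|V_i|\ge 2$ such a pair exists since $H_i$ is non-trivial.)

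For (iii), suppose $S$ is a $k$-metric basis and some $v_i\in V\cap S$. I would show $S'=(S\setminus\{v_i\})\cup\{w\}$ is still a $k$-metric generator, where $w$ is any vertex of $V_i\setminus S$ — such a $w$ exists because $|S\cap V_i|\ge k$ could in principle equal $|V_i|$, so I need to be slightly careful; if $V_i\subseteq S$ then $v_i$ is redundant and $S\setminus\{v_i\}$ already works, contradicting minimality. Otherwise, the point is that $v_i$ distinguishes a pair $x,y$ only if $v_i\in\mathcal{D}_{G\odot\mathcal{H}}(x,y)$, and one checks using part (i) that whenever $v_i$ distinguishes a pair, so does every vertex of $V_i$ (because $v_i$ and the vertices of $V_i$ all have the same distance to everything outside $V_i\cup\{v_i\}$, and $v_i$'s distance $1$ to each vertex of $V_i$ is matched); thus swapping $v_i$ for an unused $w\in V_i$ preserves the $k$-metric generator property, contradicting $|S|$ minimal unless $V\cap S=\emptyset$. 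I expect this swapping argument — verifying that $v_i$ can always be replaced by a vertex of its pendant copy — to be the main obstacle, since one must handle several pair-types (pairs inside $V_i$, pairs with one vertex in $V_i$, pairs far from $V_i$) and confirm the distinguishing count never drops.

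Finally, (iv) is almost immediate from (i) and (ii): given a $k$-metric generator $S$ for $G\odot\mathcal{H}$ and a pair $u,v\in V_i$, at least $k$ vertices of $S$ distinguish $u,v$ in $G\odot\mathcal{H}$, and by (i) all of them lie in $V_i$, i.e.\ in $S\cap V_i$. It remains only to note that for $u,v\in V_i$, distances in $G\odot\mathcal{H}$ to vertices of $V_i$ and distances in $H_i$ distinguish the same pairs: indeed $d_{G\odot\mathcal{H}}(u,z)\ne d_{G\odot\mathcal{H}}(v,z)$ for $z\in V_i$ exactly when $d_{H_i}(u,z)\ne d_{H_i}(v,z)$, because any shortest $u$–$z$ path in $G\odot\mathcal{H}$ with both endpoints in $V_i$ either stays within $V_i$ or detours through $v_i$ in exactly two steps, and this "$\le 2$" detour exactly mirrors the structure already present in $H_i+K_1$; one checks $d_{G\odot\mathcal{H}}(u,z)=\min\{d_{H_i}(u,z),2\}$ when $uz\notin E_i$, so the partition of pairs of $V_i$ into "distinguished by $z$" and "not" is identical. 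Hence $S\cap V_i$ is a $k$-metric generator for $H_i$, completing the proof.
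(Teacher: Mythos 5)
Your parts (i), (ii) and (iv) are essentially the paper's argument and are fine, except for one overstatement in (iv): the claim that for $z\in V_i$ the partition of pairs of $V_i$ into ``distinguished by $z$'' and ``not'' is \emph{identical} in $G\odot\mathcal{H}$ and in $H_i$ is false (take $d_{H_i}(u,z)=2$ and $d_{H_i}(v,z)=3$; both truncate to $2$ in the corona). Fortunately only the true inclusion $\mathcal{D}_{G\odot\mathcal{H}}(u,v)\cap V_i\subseteq\mathcal{D}_{H_i}(u,v)$ is needed, and your formula $d_{G\odot\mathcal{H}}(u,z)=\min\{d_{H_i}(u,z),2\}$ gives exactly that, so (iv) survives.

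Part (iii) has a genuine gap. Your swap $S'=(S\setminus\{v_i\})\cup\{w\}$ produces a set with $|S'|=|S|$, so even if $S'$ were a $k$-metric generator this would not contradict the minimality of $S$; at best, iterating the swap would show that \emph{some} $k$-metric basis avoids $V$, whereas the statement asserts that \emph{every} basis does. Moreover, the claim you lean on --- that whenever $v_i$ distinguishes a pair, so does every vertex of $V_i$ --- is false: for the pair $(x,v_i)$ with $x\in V_i$, the vertex $v_i$ distinguishes it, but any $H_i$-neighbour $w$ of $x$ has $d_{G\odot\mathcal{H}}(w,x)=1=d_{G\odot\mathcal{H}}(w,v_i)$ and does not. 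The paper's route avoids both problems: it shows directly that $S'=S-V$ is already a $k$-metric generator, by running through the four pair types. Pairs inside a single $V_i$ are handled by (i) (no vertex of $V$ distinguishes them anyway), and for every other pair type there is an index $l$ such that the whole copy $V_l$ distinguishes the pair, so (ii) supplies $|V_l\cap S'|\ge k$ distinguishing vertices outside $V$. Then $V\cap S\ne\emptyset$ would give $|S-V|<|S|$, a genuine contradiction with minimality. You should replace the swap by this deletion argument (or at least show $S\setminus\{v_i\}$ is still a generator, which strictly decreases the cardinality).
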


\begin{proof}
\begin{enumerate}[{\rm (i)}]
\item It is straightforward.
\item Let $S$ be a $k$-metric generator for $G\odot \mathcal{H}$. Then for any pair of vertices $x,y\in V_i$ there exist at least $k$ vertices  $u\in S$ such that $d_{G\odot\mathcal{H}}(x,u) \ne d_{G\odot\mathcal{H}}(y,u)$. Thus, by (i) it follows that $\vert S\cap V_i\vert \ge k$.

\item Let $S$ be a $k$-metric basis of $G\odot \mathcal{H}$. We will show that $S'=S-V$ is a $k$-metric generator for $G\odot \mathcal{H}$. Now, let $x,y$ be two different vertices of $G\odot \mathcal{H}$. We have the following cases.
\\
\\
\noindent {Case} 1: $x,y\in V_i$. Since $S$ is a $k$-metric basis,  by (i) we conclude that $|\mathcal{D}_{G\odot \mathcal{H}}(x,y)\cap S'|\ge k$.
\\
\\
\noindent {Case} 2: $x\in V_i$ and $y\in V_j$, $i\ne j$. Notice that for every $v\in V_i\cap S'$, we have that $d_{G\odot \mathcal{H}}(x,v)\le 2 < 3\le d_{G\odot \mathcal{H}}(y,v)$. Since $|V_i\cap S'|\ge k$, we conclude that $|\mathcal{D}_{G\odot \mathcal{H}}(x,y)\cap S'|\ge k$.
\\
\\
\noindent {Case} 3: $x,y\in V$. Let $x=v_i$. Notice that for every $v\in V_i\cap S'$ we have that $d_{G\odot \mathcal{H}}(x,v)=1<1+d_{G\odot \mathcal{H}}(y,x) = d_{G\odot \mathcal{H}}(y,v)$. Since $|V_i\cap S'|\ge k$, we conclude that $|\mathcal{D}_{G\odot \mathcal{H}}(x,y)\cap S'|\ge k$.
\\
\\
\noindent {Case} 4: $x\in V_i$ and $y\in V$. If $x\sim y$, then $y=v_i$. Let $v_j\in V$, $j\ne i$. Notice that for every $v\in V_j\cap S'$ we have that $d_{G\odot \mathcal{H}}(x,v) = 1 + d_{G\odot \mathcal{H}}(y,v)>d_{G\odot \mathcal{H}}(y,v)$. Now, if $x\not\sim y = v_l$, then for every $v\in V_l\cap S'$ it follows $d_{G\odot \mathcal{H}}(x,v) = d_{G\odot \mathcal{H}}(x,y) + d_{G\odot \mathcal{H}}(y,v)>d_{G\odot \mathcal{H}}(y, v)$. Since $|V_j\cap S'|\ge k$ and $|V_l\cap S'|\ge k$, any of the choices above implies that $|\mathcal{D}_{G\odot \mathcal{H}}(x,y)\cap S'|\ge k$.

Therefore, $S'$ is a $k$-metric generator for $G\odot \mathcal{H}$. Since $S$ is a $k$-metric basis of $G\odot \mathcal{H}$, we obtain that $V\cap S=\emptyset$.
\item Let $S$ be a $k$-metric generator for $G\odot\mathcal{H}$, and let $S_i=S\cap V_i$. By (i) we deduce that for any pair of vertices $x,y\in V_i$ it holds that $|\mathcal{D}_{G\odot \mathcal{H}}(x,y)\cap S_i|\ge k$. Since $\mathcal{D}_{G\odot\mathcal{H}}(x,y)\cap S_i\subseteq\mathcal{D}_{H_i}(x,y)$, we conclude that  $S_i$ is a $k$-metric generator for $H_i$.
\end{enumerate}
\end{proof}

\subsection{The $k$ metric dimension of $G\odot \mathcal{H}$, where $G$ and the graphs belonging to $\mathcal{H}$ are non-trivial.}
Our first result is obtained as a consequence of Lemma \ref{lemmaKCorona} (iii) and (iv).

\begin{theorem}\label{theoBoundCoronaDimH}
Let $G$ be a connected graph of order $n\ge 2$ and let $\mathcal{H}$ be a family of connected non-trivial graphs. If $G\odot \mathcal{H}$ is $k'$-metric dimensional, then for every $k\in \{1,...,k'\}$,
$$\sum_{i=1}^{n}\dim_k(H_i)\le \dim_k(G\odot \mathcal{H})\le \sum_{i=1}^n|V_i|.$$
\end{theorem}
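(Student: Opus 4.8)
The plan is to establish the two inequalities separately, using the structural results about $k$-metric generators of corona graphs already proved in Lemma~\ref{lemmaKCorona}.

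For the upper bound, I would simply exhibit a $k$-metric generator of size $\sum_{i=1}^n |V_i|$. Since $G\odot\mathcal{H}$ is $k'$-metric dimensional and $k\le k'$, there exists a $k$-metric basis, hence the $k$-metric dimension is well-defined; it remains to check that $S=\bigcup_{i=1}^n V_i$ is itself a $k$-metric generator. This should follow by running through the same case analysis used in the proof of Lemma~\ref{lemmaKCorona}(iii), or more directly: for any two distinct vertices $x,y$ of $G\odot\mathcal{H}$ one shows $|\mathcal{D}_{G\odot\mathcal{H}}(x,y)\cap S|\ge k$. When $x,y$ lie in distinct copies or in $V$, many entire copies $V_l$ with $|V_l|\ge k$ lie in $\mathcal{D}_{G\odot\mathcal{H}}(x,y)$ (using the formulas from Cases~1--4 in the proof of Theorem~\ref{theoKmetricCorona}); when $x,y\in V_i$, one uses that $\mathcal{D}_{G\odot\mathcal{H}}(x,y)=(N_{H_i}(x)\triangledown N_{H_i}(y))\cup\{x,y\}\subseteq S$ has size at least $\mathcal{C}(\mathcal{H})=k'\ge k$. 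Hence $\dim_k(G\odot\mathcal{H})\le |S|=\sum_{i=1}^n|V_i|$.

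For the lower bound, let $S$ be a $k$-metric basis of $G\odot\mathcal{H}$. By Lemma~\ref{lemmaKCorona}(iii), $V\cap S=\emptyset$, so $S=\bigcup_{i=1}^n (S\cap V_i)$ is a disjoint union. By Lemma~\ref{lemmaKCorona}(iv), each $S_i:=S\cap V_i$ is a $k$-metric generator for $H_i$, so $|S_i|\ge \dim_k(H_i)$. Summing over $i$ and using disjointness gives $\dim_k(G\odot\mathcal{H})=|S|=\sum_{i=1}^n|S_i|\ge\sum_{i=1}^n\dim_k(H_i)$. One small point to verify is that $\dim_k(H_i)$ is meaningful, i.e. that each $H_i$ is $k_i$-metric dimensional for some $k_i\ge k$; this follows from Proposition~\ref{IHkDimThenCoronakDim}(i), which gives $k\le k_i$ for all $i$.

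I do not expect a serious obstacle here, as both directions are essentially bookkeeping on top of Lemma~\ref{lemmaKCorona}. The only place requiring a little care is confirming that the full vertex set of the ``tentacles'' actually works as a $k$-metric generator for the upper bound — in particular that pairs $x=v_i\in V$, $y=v_j\in V$ with $v_i\sim v_j$ are still distinguished by at least $k$ vertices of $S$ even though no vertex of $V$ is included; but this is handled exactly as in Case~3 of the proof of Lemma~\ref{lemmaKCorona}(iii), where one uses a copy $V_i$ with $|V_i|\ge k'\ge k$ all of whose vertices lie in $\mathcal{D}_{G\odot\mathcal{H}}(v_i,v_j)$. So the proof reduces to citing Lemma~\ref{lemmaKCorona}(iii)--(iv) for the lower bound and re-running its case analysis for the upper bound.
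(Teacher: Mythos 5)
Your proof is correct and follows essentially the same route as the paper, which derives the theorem directly from Lemma~\ref{lemmaKCorona}(iii) and (iv). The only (harmless) extra work is in your upper bound: rather than re-running the case analysis to show $\bigcup_{i=1}^n V_i$ is a $k$-metric generator, it suffices to note that by Lemma~\ref{lemmaKCorona}(iii) any $k$-metric basis is already contained in $\bigcup_{i=1}^n V_i$, so its cardinality is at most $\sum_{i=1}^n|V_i|$.
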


Our next result is a direct consequence of combining the lower and upper bounds of Theorem \ref{theoBoundCoronaDimH}.

\begin{corollary}\label{PropoTheUpperboundTight}
Let $G$ be a connected graph of order $n\ge 2$ and let $\mathcal{H}$ be a family of connected non-trivial graphs. If $G\odot \mathcal{H}$ is $k$-metric dimensional and $\dim_k(H_i)=|V_i|$ for every graph $H_i\in\mathcal{H}$, then $$\dim_k(G\odot \mathcal{H})=\sum_{i=1}^n |V_i|.$$
\end{corollary}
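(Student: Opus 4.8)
The statement to prove is Corollary~\ref{PropoTheUpperboundTight}, which asserts that if $G\odot\mathcal{H}$ is $k$-metric dimensional and $\dim_k(H_i)=|V_i|$ for every $H_i\in\mathcal{H}$, then $\dim_k(G\odot\mathcal{H})=\sum_{i=1}^n|V_i|$.

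This is indeed a direct consequence of Theorem~\ref{theoBoundCoronaDimH}. The plan is as follows. First, since $G\odot\mathcal{H}$ is $k$-metric dimensional, $k\in\{1,\dots,k'\}$ where $G\odot\mathcal{H}$ is $k'$-metric dimensional, so Theorem~\ref{theoBoundCoronaDimH} applies and gives
$$\sum_{i=1}^n\dim_k(H_i)\le\dim_k(G\odot\mathcal{H})\le\sum_{i=1}^n|V_i|.$$
Next, I would substitute the hypothesis $\dim_k(H_i)=|V_i|$ into the left-hand side, which turns the lower bound into $\sum_{i=1}^n|V_i|\le\dim_k(G\odot\mathcal{H})$. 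Combining this with the upper bound $\dim_k(G\odot\mathcal{H})\le\sum_{i=1}^n|V_i|$ forces equality, yielding $\dim_k(G\odot\mathcal{H})=\sum_{i=1}^n|V_i|$.

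There is essentially no obstacle here: the corollary is a one-line squeeze argument, and all the real work is already contained in Theorem~\ref{theoBoundCoronaDimH} (which in turn rests on Lemma~\ref{lemmaKCorona}(iii)--(iv)). The only point worth noting is that each $H_i$ must be $k$-metric dimensional for $\dim_k(H_i)$ to be defined; but this is implicit in the hypothesis $\dim_k(H_i)=|V_i|$, and is also guaranteed since by Theorem~\ref{theoBoundCoronaDimH} any $k\le k'$ satisfies $k\le\dim_k(H_i)\le|V_i|$, so $\dim_k(H_i)$ makes sense. Hence the proof is simply: apply Theorem~\ref{theoBoundCoronaDimH}, plug in the hypothesis, and conclude by the sandwich.
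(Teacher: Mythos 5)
Your proof is correct and matches the paper exactly: the paper presents this corollary as a direct consequence of combining the lower and upper bounds of Theorem~\ref{theoBoundCoronaDimH}, which is precisely your squeeze argument. Your remark that $\dim_k(H_i)$ is well defined for the relevant $k$ is a fine extra observation but not needed beyond what the hypothesis already supplies.
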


$P_4$ and $C_6$ are two examples for the graph $H$ satisfying the conditions of Corollary \ref{PropoTheUpperboundTight}. Notice that $G\odot P_4$ is $3$-metric dimensional and $\dim_3(P_4)=4$. Also, $G\odot C_6$ is $4$-metric dimensional and $\dim_4(C_6)=6$. Therefore, the next result is a particular case of Corollary \ref{PropoTheUpperboundTight}.

\begin{remark}
For any non-trivial graph $G$ of order $n$, $\dim_3(G\odot P_4)=4n$ and $\dim_4(G\odot C_6)=6n$.
\end{remark}

\begin{theorem}\label{coroTwinBound2}
Let $G$ be a connected graph of order $n\ge 2$, and let $\mathcal{H}$ be a family of connected non-trivial graphs. Then, every $H_i\in\mathcal{H}$ is composed by twin vertices if and only if $$\dim_2(G\odot \mathcal{H})=\sum_{i=1}^n |V_i|.$$
\end{theorem}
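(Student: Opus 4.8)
The plan is to establish the two implications separately, using the explicit description of the sets $\mathcal{D}_{G\odot\mathcal{H}}(x,y)$ obtained in the proof of Theorem \ref{theoKmetricCorona}. Note first that $\dim_2(G\odot\mathcal{H})$ is always defined, since every graph is $k'$-metric dimensional for some $k'\ge 2$; and that $\dim_2(G\odot\mathcal{H})\le\sum_{i=1}^n|V_i|$ by Lemma \ref{lemmaKCorona}(iii) (equivalently, by the upper bound of Theorem \ref{theoBoundCoronaDimH}).

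For the direct implication, assume every $H_i\in\mathcal{H}$ is composed by twin vertices. Then Corollary \ref{remarkDim2n} gives $\dim_2(H_i)=|V_i|$ for every $i$, and Corollary \ref{PropoTheUpperboundTight} applied with $k=2$ yields $\dim_2(G\odot\mathcal{H})=\sum_{i=1}^n|V_i|$ at once; alternatively one combines the lower bound of Theorem \ref{theoBoundCoronaDimH} with the trivial upper bound.

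For the converse I would argue by contraposition. Suppose some $H_j\in\mathcal{H}$ contains a vertex $w$ that is not a twin in $H_j$. The idea is to show that $S:=\bigl(\bigcup_{i=1}^n V_i\bigr)-\{w\}$ is a $2$-metric generator of $G\odot\mathcal{H}$, which forces $\dim_2(G\odot\mathcal{H})\le\sum_{i=1}^n|V_i|-1<\sum_{i=1}^n|V_i|$. To verify this, take two distinct vertices $x,y$ and bound $|\mathcal{D}_{G\odot\mathcal{H}}(x,y)\cap S|$ along the four cases of the proof of Theorem \ref{theoKmetricCorona}. When $x,y$ lie in distinct copies $V_i,V_l$, or when $x,y\in V$, the set $\mathcal{D}_{G\odot\mathcal{H}}(x,y)$ contains two full copies $V_i\cup V_l$, of total order at least $4$, so at least three of its vertices survive in $S$. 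When $x\in V_i$ and $y=v_l\in V$, the set $\mathcal{D}_{G\odot\mathcal{H}}(x,y)$ contains $V_l\cup\{x\}$ if $l\ne i$, and $\{x\}\cup\bigcup_{m\ne i}V_m$ if $l=i$ (here $n\ge 2$ is used), leaving at least two vertices in $S$. When $x,y\in V_i$ with $i\ne j$, we have $w\notin V_i\supseteq\mathcal{D}_{G\odot\mathcal{H}}(x,y)$, so $x,y$ themselves lie in $\mathcal{D}_{G\odot\mathcal{H}}(x,y)\cap S$.

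The delicate case, which I expect to be the main obstacle, is $x,y\in V_j$, the copy from which $w$ was deleted; here $\mathcal{D}_{G\odot\mathcal{H}}(x,y)=(N_{H_j}(x)\triangledown N_{H_j}(y))\cup\{x,y\}$ and one must use the hypothesis on $w$. The plan is to split on the location of $w$: if $w\notin(N_{H_j}(x)\triangledown N_{H_j}(y))\cup\{x,y\}$ then $|\mathcal{D}_{G\odot\mathcal{H}}(x,y)\cap S|=|\mathcal{D}_{G\odot\mathcal{H}}(x,y)|\ge 2$; if $w\in(N_{H_j}(x)\triangledown N_{H_j}(y))-\{x,y\}$ then $\{x,y\}\subseteq\mathcal{D}_{G\odot\mathcal{H}}(x,y)\cap S$; and if $w\in\{x,y\}$, say $w=x$, then since $w$ is not a twin of $y$ in $H_j$ there exists $z\notin\{x,y\}$ with $z\in N_{H_j}(x)\triangledown N_{H_j}(y)$, so $\{y,z\}\subseteq\mathcal{D}_{G\odot\mathcal{H}}(x,y)\cap S$. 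In all cases $|\mathcal{D}_{G\odot\mathcal{H}}(x,y)\cap S|\ge 2$, hence $S$ is a $2$-metric generator and the contrapositive is proved, completing the argument.
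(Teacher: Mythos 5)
Your proposal is correct and follows essentially the same route as the paper: the forward direction via Corollary \ref{remarkDim2n} and Corollary \ref{PropoTheUpperboundTight}, and the converse by exhibiting the set obtained from $\bigcup_{i=1}^n V_i$ by deleting a single non-twin vertex as a $2$-metric generator. Your case analysis is in fact slightly more careful than the paper's (you explicitly verify the pairs not contained in the affected copy, and your use of ``not a twin'' correctly covers the true-twin case that the paper's stated hypothesis $N_{H_i}(x)\ne N_{H_i}(y)$ glosses over), but the underlying argument is the same.
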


\begin{proof}
Suppose that every $H_i\in\mathcal{H}$ is formed by twin vertices. By Corollary \ref{remarkDim2n}, we deduce that every $H_i\in\mathcal{H}$ holds that $\dim_2(H_i)=|V_i|$. So, by Corollary \ref{PropoTheUpperboundTight} we conclude that $\dim_2(G\odot \mathcal{H})=\sum_{i=1}^n |V_i|$.

Conversely, assume that $\dim_2(G\odot \mathcal{H})=\sum_{i=1}^n |V_i|$.
We proceed by contradiction.   Suppose that there exists $x\in V_i$ such that for every $y\in W=V_i-\{x\}$ it holds $N_{H_i}(x)\ne N_{H_i}(y)$.
In such a case, $\vert V_i\vert \ge 3$ and since $H_i$ is connected, for every $y\in W$ we have the following.
\begin{itemize}
\item If $y\sim x$, then $\vert N_{H_i}(x)\bigtriangledown N_{H_i}(y) - \{x\}\vert \ge 2$ and, as a consequence, $\vert\mathcal{D}_{G\odot \mathcal{H}}(x,y)\cap W \vert \ge 2$.
\item If $y\not\sim x$, then $\vert N_{H_i}(x)\bigtriangledown N_{H_i}(y)\vert \ge 1$ and also $y$ distinguishes the pair $x,y$. Thus, again, $\vert\mathcal{D}_{G\odot \mathcal{H}}(x,y)\cap W \vert \ge 2$.
\end{itemize}
Now, we take $S$ as a $2$-metric basis of $G\odot \mathcal{H}$. By Lemma \ref{lemmaKCorona} (iii) we have that $S\cap V=\emptyset$ and, consequently, for any $j\in \{1,...,n\}$ we have $S\cap V_j=V_j$. Also,
by Lemma \ref{lemmaKCorona} (i), every pair of vertices of $H_j$ is only distinguished by vertices of $H_j$. Therefore, $S'=W\cup \left(\bigcup_{j\ne i}V_j \right)$
is a $2$-metric generator for $G\odot \mathcal{H}$ and $\vert S' \vert < \sum_{i=1}^n |V_i|= dim_2(G\odot \mathcal{H}) $, which is a contradiction.
\end{proof}

Next we present another case where the lower bound of Theorem \ref{theoBoundCoronaDimH} is achieved.

\begin{theorem}\label{theoEqualCoronaDimH}
Let $G$ be a connected graph of order $n\ge 2$ and let $\mathcal{H}$ be a family of $n$ non-trivial graphs such that every $H_i\in \mathcal{H}$ is $k_i$-metric dimensional and $D(H_i)\le 2$. If $k'=\displaystyle\min_{i\in 1,...,n} \{k_i\}$, then for every $k\in\{1,..., k'\}$,
$$\dim_k(G\odot \mathcal{H})=\sum_{i=1}^n\dim_k(H_i).$$
\end{theorem}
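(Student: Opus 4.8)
The plan is to prove the two inequalities $\sum_{i=1}^{n}\dim_k(H_i)\le \dim_k(G\odot\mathcal{H})$ and $\dim_k(G\odot\mathcal{H})\le \sum_{i=1}^{n}\dim_k(H_i)$ separately. For the first one I would simply invoke Theorem \ref{theoBoundCoronaDimH}: since $D(H_i)\le 2$ forces each $H_i$ to be connected, and since the corollary following Proposition \ref{IHkDimThenCoronakDim} (the one asserting that $G\odot\mathcal{H}$ is $\min_i\{k_i\}$-metric dimensional whenever $D(H_i)\le 2$ for every $i$) guarantees that $G\odot\mathcal{H}$ is exactly $k'$-metric dimensional with $k'=\min_i k_i$, Theorem \ref{theoBoundCoronaDimH} yields $\sum_{i=1}^{n}\dim_k(H_i)\le \dim_k(G\odot\mathcal{H})$ for every $k\in\{1,\dots,k'\}$. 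So all the work goes into the reverse inequality.

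For the upper bound, note that $k\le k'\le k_i$, so each $H_i$ is $k$-metric dimensional; I would choose a $k$-metric basis $B_i\subseteq V_i$ of $H_i$ and set $S=\bigcup_{i=1}^{n}B_i$. Since the sets $V_i$ are pairwise disjoint, $|S|=\sum_{i=1}^{n}\dim_k(H_i)$, so it remains to check that $S$ is a $k$-metric generator for $G\odot\mathcal{H}$, i.e. $|\mathcal{D}_{G\odot\mathcal{H}}(x,y)\cap S|\ge k$ for every pair of distinct vertices $x,y$. The one preliminary fact I need is that, because $D(H_i)\le 2$, for all $x,y\in V_i$ we have $\mathcal{D}_{H_i}(x,y)=N_{H_i}(x)\triangledown N_{H_i}(y)\cup\{x,y\}$, which by Case 4 of the proof of Theorem \ref{theoKmetricCorona} coincides with $\mathcal{D}_{G\odot\mathcal{H}}(x,y)$.

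I would then run through the same four cases as in the proof of Theorem \ref{theoKmetricCorona}. In Cases 1 and 2 (where $x\in V_i$, $y\in V_j$ with $i\ne j$, or $x=v_i$, $y=v_j$), connectedness of $G$ gives $v_i\in\mathcal{D}_G(v_i,v_j)$, hence $V_i\subseteq\mathcal{D}_{G\odot\mathcal{H}}(x,y)$, so $B_i\subseteq\mathcal{D}_{G\odot\mathcal{H}}(x,y)\cap S$ and $|\mathcal{D}_{G\odot\mathcal{H}}(x,y)\cap S|\ge|B_i|\ge k$. In Case 3 ($x\in V_i$, $y=v_j$): if $j\ne i$ then $V_j\subseteq\mathcal{D}_{G\odot\mathcal{H}}(x,y)$ and $B_j$ does the job; if $j=i$ then $\mathcal{D}_{G\odot\mathcal{H}}(x,v_i)=V(G\odot\mathcal{H})-N_{H_i}(x)$ contains $V_l$ for every $l\ne i$, and since $n\ge 2$ such an $l$ exists, so $B_l$ does the job. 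In Case 4 ($x,y\in V_i$), the preliminary fact gives $\mathcal{D}_{G\odot\mathcal{H}}(x,y)\cap S\supseteq\mathcal{D}_{H_i}(x,y)\cap B_i$, and the latter has at least $k$ elements because $B_i$ is a $k$-metric generator for $H_i$. This shows $S$ is a $k$-metric generator, so $\dim_k(G\odot\mathcal{H})\le|S|=\sum_{i=1}^{n}\dim_k(H_i)$, and together with the lower bound we obtain equality.

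I expect Case 4, and more precisely the role of the diameter hypothesis, to be the only delicate point. Without $D(H_i)\le 2$, the set $\mathcal{D}_{H_i}(x,y)$ may be strictly larger than $\mathcal{D}_{G\odot\mathcal{H}}(x,y)\cap V_i$ (for instance it could contain a vertex $z$ at distance $2$ from $x$ and $3$ from $y$ inside $H_i$, distances that both collapse to $2$ inside the corona), so the $k$ vertices of $B_i$ witnessing the pair $x,y$ in $H_i$ need not all remain distinctive in $G\odot\mathcal{H}$; the hypothesis $D(H_i)\le 2$ forces $\mathcal{D}_{H_i}(x,y)$ to reduce exactly to $N_{H_i}(x)\triangledown N_{H_i}(y)\cup\{x,y\}$, which is precisely the part of it that survives in the corona. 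Everything else is routine distance bookkeeping already carried out in the proof of Theorem \ref{theoKmetricCorona}.
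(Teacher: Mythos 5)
Your proposal is correct and follows essentially the same route as the paper: the lower bound is read off from Theorem \ref{theoBoundCoronaDimH}, and the upper bound is obtained by taking the union of $k$-metric bases of the $H_i$ and verifying the same four cases, with the diameter hypothesis used exactly where the paper uses it, namely to guarantee $\mathcal{D}_{H_i}(x,y)=\mathcal{D}_{G\odot\mathcal{H}}(x,y)$ for $x,y\in V_i$. Your explicit appeal to the corollary following Proposition \ref{IHkDimThenCoronakDim} to justify that $G\odot\mathcal{H}$ is $k'$-metric dimensional is a small clarification the paper leaves implicit, but the argument is the same.
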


\begin{proof}
Let $k\in\{1,..., k'\}$ and let $S_i\subseteq V_i$ be a $k$-metric basis of $H_i$.   We will show that $S=\bigcup_{i=1}^{n}S_i$ is a $k$-metric generator for $G\odot \mathcal{H}$. Let us consider two different vertices $x,y$ of $G\odot \mathcal{H}$. We have the following cases.
\\
\\
\noindent {Case} 1: $x,y\in V_i$. Since $S_i$ is a $k$-metric basis of $H_i$, we have that $|\mathcal{D}_{H_i}(x,y)\cap S_i|\ge k$. Also, if $D(H_i)\le 2$, then for every $a,b\in V_i$, we have that $d_{H_i}(a,b)=d_{G\odot \mathcal{H}}(a,b)$. Now, since no vertex $u\in V(G\odot \mathcal{H})-V_i$ distinguishes the pair $x,y$, we conclude that $\mathcal{D}_{H_i}(x,y)=\mathcal{D}_{G\odot \mathcal{H}}(x,y)$. Thus, we obtain that $|\mathcal{D}_{G\odot \mathcal{H}}(x,y)\cap S|\ge k$.
\\
\\
\noindent {Case} 2: $x\in V_i$ and $y\in V_j$, $i\ne j$. For every $v\in S_i$  we have $d(x,v)\le 2<3\le d(y,v)$. Since $|S_i|\ge k$, we conclude that $|\mathcal{D}_{G\odot \mathcal{H}}(x,y)\cap S|\ge k$.
\\
\\
\noindent {Case} 3: $x,y\in V$. Assume $x=v_i$. Hence, for every $v\in S_i$, we have $d(x,v)=1<d(y,x)+1=d(y,v)$. Again, as $|S_i|\ge k$, we obtain that $|\mathcal{D}_{G\odot \mathcal{H}}(x,y)\cap S|\ge k$.
\\
\\
\noindent {Case} 4: $x\in V_i$ and $y\in V$. If $y=v_i$, then for every $v\in S_j$, with $j\ne i$, it follows that $d(x,v)=1+d(y,v)>d(y,v)$. Now,  if $y=v_l$, $l\ne i$, then for every $v\in S_l$, we have $d(x,v)=d(x,y)+d(y,v)>d(y,v)$. Finally, since $|S_j|\ge k$ and $|S_l|\ge k$, both possibilities lead to $|\mathcal{D}_{G\odot \mathcal{H}}(x,y)\cap S|\ge k$.

Thus, for every pair of different vertices $x,y\in V(G\odot \mathcal{H})$, we have that $|\mathcal{D}_{G\odot \mathcal{H}}(x,y)\cap S|\ge k$. So, $S$ is a $k$-metric generator for $G\odot \mathcal{H}$ and, as a consequence, $\dim_k(G\odot \mathcal{H})\le |S|=\sum_{i=1}^n\dim_k(H_i)$. The proof is completed by the lower bound of Theorem \ref{theoBoundCoronaDimH}.
\end{proof}

We must point out that Theorems \ref{theoBoundCoronaDimH} and \ref{theoEqualCoronaDimH} are generalizations of previous results established in \cite{Yero2011} for the case $k=1$.

Notice that there are values for $\dim_k(G\odot \mathcal{H})$ non achieving the bounds given in Theorem \ref{theoBoundCoronaDimH}. For instance,
if there exists a $k$-metric basis $S$ of $G\odot \mathcal{H}$ and a graph $H_i\in\mathcal{H}$ such that $\dim_k(H_i)<|S\cap V_i|<|V_i|$, then by Lemma \ref{lemmaKCorona} (iii) and (iv) we conclude $$\sum_{i=1}^{n}\dim_k(H_i)<\dim_k(G\odot \mathcal{H})<\sum_{i=1}^n|V_i|.$$
The results given in Proposition \ref{role-fan} show some examples for the observation above.

\subsection{The $k$-metric dimension of $K_1+H$ and its role in the study of the $k$-metric dimension of $G\odot \mathcal{H}$}

\begin{remark}\label{remark-H-K_1+H}
Let $H$ be a non-trivial graph.  If $B$ is a $k$-metric basis of $K_1+H$, then $B\cap V(H)$ is a $k$-metric generator for $H$.
\end{remark}

\begin{proof}
Let $B$ be a $k$-metric basis of $K_1+H$. Since the vertex of $K_1$ is adjacent to every vertex of $H$, for every $x,y\in V(H)$, we have $\vert B\cap \left( N_H(x) \bigtriangledown N_H(y)\cup \{x,y\}\right)\vert\ge k$ and, as a consequence,  $\vert B\cap \mathcal{D}_{H}(x,y)\vert \ge k$. Therefore,    $B\cap V(H)$ is a $k$-metric generator for $H$.
\end{proof}

\begin{corollary}
Let $H$ be a non-trivial graph.  If $K_1+H$ is a $k'$-metric dimensional graph, then   for every  $k\in\{1,\ldots,k'\}$,
 $$\dim_k(H)\le \dim_k(K_1+H).$$
\end{corollary}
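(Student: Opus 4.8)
The plan is to reduce the statement directly to Remark~\ref{remark-H-K_1+H}. First I would fix an arbitrary $k\in\{1,\ldots,k'\}$ and check that $\dim_k(H)$ is actually well-defined: letting $H$ be $k''$-metric dimensional, Corollary~\ref{H,K_1+H} gives $k'\le k''$, so $k\le k'\le k''$, and hence $H$ admits a $k$-metric basis. This preliminary step is the only place where the hypothesis ``$K_1+H$ is $k'$-metric dimensional'' is genuinely used beyond guaranteeing that $\dim_k(K_1+H)$ makes sense.

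Next I would take a $k$-metric basis $B$ of $K_1+H$, so $|B|=\dim_k(K_1+H)$. By Remark~\ref{remark-H-K_1+H}, the set $B\cap V(H)$ is a $k$-metric generator for $H$. Therefore
$$\dim_k(H)\le |B\cap V(H)|\le |B|=\dim_k(K_1+H),$$
which is exactly the claimed inequality. Since $k\in\{1,\ldots,k'\}$ was arbitrary, this completes the argument.

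There is essentially no obstacle here: the content is entirely carried by Remark~\ref{remark-H-K_1+H} (itself a consequence of the fact that the apex vertex of $K_1$ is adjacent to all of $V(H)$, so the apex never contributes to distinguishing a pair within $V(H)$ via symmetric differences of neighborhoods), together with the bookkeeping of which values of $k$ keep both dimension parameters meaningful. The mild subtlety worth stating explicitly is the use of Corollary~\ref{H,K_1+H} to ensure $k\le k''$, so that no vacuous comparison of undefined quantities occurs.
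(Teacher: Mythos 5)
Your proof is correct and follows exactly the route the paper intends: the corollary is stated there as an immediate consequence of Remark~\ref{remark-H-K_1+H}, and your argument---intersecting a $k$-metric basis $B$ of $K_1+H$ with $V(H)$ and bounding $\dim_k(H)\le |B\cap V(H)|\le |B|$---is precisely that deduction. Your additional check that $\dim_k(H)$ is well defined for all $k\le k'$ via Corollary~\ref{H,K_1+H} is a harmless and slightly more careful touch, not a deviation in approach.
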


Given a $k'$-metric dimensional graph $K_1+H$ and an integer $k\in\{1,\ldots,k'\}$, we define the following binary function.
$$f(H,k)=\left\{\begin{array}{ll}
0 & \textrm{if the vertex of $K_1$ does not belong to any $k$-metric basis of $K_1+H$,}\\
  & \\
1 & \textrm{if there exists a $k$-metric basis $S$ of $K_1+H$ containing the vertex of $K_1$.}
\end{array}\right.$$

\begin{theorem}\label{theoG_K1HBound}
Let $G$ be a connected graph of order $n\ge 2$ and let $\mathcal{H}$ be a family of $n$ non-trivial graphs such that for every $H_i\in \mathcal{H}$, the graph $K_1+H_i$ is $k_i$-metric dimensional. If $k'=\displaystyle\min_{i\in \{1,...,n\}}\{k_i\}$, then for any $k\in \{1,...,k'\}$,
$$\dim_k(G\odot \mathcal{H})\le \sum_{i=1}^n \left(\dim_k(K_1+H_i)-f(H_i,k)\right).$$
\end{theorem}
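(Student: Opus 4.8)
The plan is to exhibit an explicit $k$-metric generator for $G\odot\mathcal{H}$ of the stated size, by ``relocating'' into the copies $V_i$ suitable $k$-metric bases of the graphs $K_1+H_i$; the guiding idea is that the vertex $v_i$ of $G$ already plays inside $G\odot\mathcal{H}$ exactly the role that the vertex of $K_1$ plays inside $K_1+H_i$, so it need not (and, by Lemma \ref{lemmaKCorona}(iii), should not) be included. Fix $k\in\{1,\dots,k'\}$; since $k\le k'\le k_i$, each $K_1+H_i$ admits a $k$-metric basis, and $f(H_i,k)$ is defined. For each $i$ I would choose $B_i$ as follows: if $f(H_i,k)=1$, let $B_i$ be a $k$-metric basis of $K_1+H_i$ containing the vertex of $K_1$ and put $S_i=B_i\setminus\{\text{vertex of }K_1\}\subseteq V_i$; if $f(H_i,k)=0$, let $B_i$ be any $k$-metric basis of $K_1+H_i$ (it automatically avoids the vertex of $K_1$) and put $S_i=B_i\subseteq V_i$. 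In either case $B_i\cap V(H_i)=S_i$ and $|S_i|=\dim_k(K_1+H_i)-f(H_i,k)$. Then I set $S=\bigcup_{i=1}^n S_i$ and claim $S$ is a $k$-metric generator for $G\odot\mathcal{H}$, whence $\dim_k(G\odot\mathcal{H})\le|S|=\sum_{i=1}^n\bigl(\dim_k(K_1+H_i)-f(H_i,k)\bigr)$.

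Two preliminary observations carry most of the weight. First, by Remark \ref{remark-H-K_1+H}, $S_i=B_i\cap V(H_i)$ is a $k$-metric generator for $H_i$, so $|S_i|\ge k$ for every $i$. Second, for $a,b\in V_i$ one has $d_{G\odot\mathcal{H}}(a,b)=d_{K_1+H_i}(a,b)$ (both equal $1$ if $a\sim b$ in $H_i$ and $2$ otherwise), and, exactly as computed in the proofs of Theorem \ref{theoKmetricCorona} (Case 4) and Proposition \ref{kMectricDimK1H}, for $x,y\in V_i$ one has $\mathcal{D}_{G\odot\mathcal{H}}(x,y)=N_{H_i}(x)\triangledown N_{H_i}(y)\cup\{x,y\}=\mathcal{D}_{K_1+H_i}(x,y)\subseteq V_i$. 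In particular the vertex of $K_1$ never lies in $\mathcal{D}_{K_1+H_i}(x,y)$, so $\mathcal{D}_{K_1+H_i}(x,y)\cap B_i=\mathcal{D}_{K_1+H_i}(x,y)\cap S_i=\mathcal{D}_{G\odot\mathcal{H}}(x,y)\cap S_i$.

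With these in hand, the verification that $S$ is a $k$-metric generator follows the four-case split from the proof of Theorem \ref{theoEqualCoronaDimH}. If $x,y\in V_i$, then by Lemma \ref{lemmaKCorona}(i) only vertices of $V_i$ distinguish the pair, and the displayed identity gives $|\mathcal{D}_{G\odot\mathcal{H}}(x,y)\cap S|=|\mathcal{D}_{G\odot\mathcal{H}}(x,y)\cap S_i|=|\mathcal{D}_{K_1+H_i}(x,y)\cap B_i|\ge k$ because $B_i$ is a $k$-metric basis of $K_1+H_i$. In each of the remaining cases ($x\in V_i$, $y\in V_j$ with $i\ne j$; $x,y\in V$; $x\in V_i$, $y\in V$) I would point to an index $m$ such that every vertex of $S_m$ distinguishes $x,y$ — for example, if $x\in V_i$ and $y\in V_j$ with $i\ne j$ then $d_{G\odot\mathcal{H}}(x,v)\le 2<3\le d_{G\odot\mathcal{H}}(y,v)$ for all $v\in S_i$, and if $x\in V_i$, $y=v_i$ then $d_{G\odot\mathcal{H}}(x,v)=1+d_{G\odot\mathcal{H}}(y,v)$ for $v\in S_j$ with $j\ne i$ (such $j$ exists since $n\ge 2$) — after which $|S_m|\ge k$ finishes the case; these computations are identical to those already performed in Theorem \ref{theoEqualCoronaDimH}.

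I expect the only genuinely delicate point to be the bookkeeping of the function $f$: one must see that deleting the vertex of $K_1$ from a basis that contains it costs exactly one vertex while still leaving a set that, by the second observation above together with Remark \ref{remark-H-K_1+H}, witnesses $k$ distinctive vertices for every pair inside $V_i$ — this is exactly why $\dim_k(K_1+H_i)-f(H_i,k)$, rather than $\dim_k(K_1+H_i)$, is the correct per-copy contribution. The rest is a routine adaptation of the case analysis of Theorems \ref{theoKmetricCorona} and \ref{theoEqualCoronaDimH}.
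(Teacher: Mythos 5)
Your proposal is correct and follows essentially the same route as the paper: you build the generator $\bigcup_i S_i$ by taking a $k$-metric basis $B_i$ of $K_1+H_i$ containing the $K_1$-vertex whenever possible, remove that vertex, invoke Remark \ref{remark-H-K_1+H} to get $|S_i|\ge k$, and run the same four-case distance analysis. Your justification of Case 1 (via $\mathcal{D}_{G\odot\mathcal{H}}(x,y)=\mathcal{D}_{K_1+H_i}(x,y)\cap V_i$ and the fact that the $K_1$-vertex never distinguishes a pair inside $V_i$) is in fact stated slightly more carefully than the paper's, but it is the same argument.
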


\begin{proof}
Let $V(G)=\{v_1,v_2,...,v_n\}$ be the vertex set of $G$. Now, for every $v_i\in V(G)$, let $B_i$ be a $k$-metric basis of $\langle v_i\rangle+ H_i$ containing $v_i$ if possible. Let $B'_i=B_i-\{v_i\}$ (notice that if for some $l\in \{1,...,n\}$, the vertex $v_l$ does not belong to any $k$-metric basis of $\langle v_l\rangle +H_l$,  then $B'_l=B_l$). From Remark \ref{remark-H-K_1+H}, we have that $B'_i$ is a $k$-metric generator for $H_i$. Thus, $|B'_i|\ge k$. We will show that $B=\bigcup_{i=1}^{n}B'_i$ is a $k$-metric generator for $G\odot \mathcal{H}$. We consider the following cases for any pair of  different vertices $x,y\in V(G\odot \mathcal{H})$.
\\
\\
\noindent {Case} 1: $x,y\in V_i$. Since no vertex outside of $V_i$ distinguishes $x,y$, we have that $|B_i'\cap \mathcal{D}_{G\odot \mathcal{H}}(x,y)|=|B'_i|\ge k$ and, consequently, $|B\cap \mathcal{D}_{G\odot \mathcal{H}}(x,y)|\ge k$.
\\
\\
\noindent {Case} 2: $x\in V_i$ and $y\in V_j$, $i\ne j$. For every $v\in B_i'$, we have that $d_{G\odot \mathcal{H}}(x,v)\le 2<3\le d_{G\odot \mathcal{H}}(y,v)$. Thus,  $B_i'\subset \mathcal{D}_{G\odot \mathcal{H}}(x,y)$ and, since $|B_i'|\ge k$, we conclude that $|B\cap \mathcal{D}_{G\odot \mathcal{H}}(x,y)|\ge k$.
\\
\\
\noindent {Case} 3: $x,y\in V$. Suppose now that $x=v_i$. In this case for every $v\in B_i'$ we have that $d_{G\odot \mathcal{H}}(x,v)=1<d_{G\odot \mathcal{H}}(y,x)+1=d_{G\odot \mathcal{H}}(y,v)$. Hence,  $B_i'\subset \mathcal{D}_{G\odot \mathcal{H}}(x,y)$ and, since $|B_i'|\ge k$, we conclude that $|B\cap \mathcal{D}_{G\odot \mathcal{H}}(x,y)|\ge k$.
\\
\\
\noindent {Case} 4: $x\in V_i$ and $y\in V$. If $y=v_i$, then for every  $v\in B_j'$, with $j\ne i$, we have $d_{G\odot \mathcal{H}}(x,v)=1+d_{G\odot \mathcal{H}}(y,v)>d_{G\odot \mathcal{H}}(y,v)$. Thus, $B_j'\subset \mathcal{D}_{G\odot \mathcal{H}}(x,y)$ and, since $|B_j'|\ge k$, we conclude that $|B\cap \mathcal{D}_{G\odot \mathcal{H}}(x,y)|\ge k$. Now, let us assume that $y=v_j$, with $j\ne i$. In this case for every $v\in B_j'$ we have that $d_{G\odot \mathcal{H}}(x,v)=d_{G\odot \mathcal{H}}(x,y)+d_{G\odot \mathcal{H}}(y,v)>d_{G\odot \mathcal{H}}(y,v)$. So, $B_j'\subset \mathcal{D}_{G\odot \mathcal{H}}(x,y)$ and, as $|B_j'|\ge k$, we conclude that $|B\cap \mathcal{D}_{G\odot \mathcal{H}}(x,y)|\ge k$.

Therefore, $B$ is a $k$-metric generator for $G\odot H$ and, as a consequence,
$$\dim_k(G\odot H)\le \vert B\vert= \sum_{i=1}^n|B_i'|=\sum_{i=1}^n \left(\dim_k(\langle v_i\rangle+H_i)-f(H_i,k)\right).$$
Since $\langle v_i\rangle+H_i\cong K_1+H_i$, the proof is complete.
\end{proof}

To see that the equality in Theorem \ref{theoG_K1HBound} is attained, we take a family $\mathcal{H}$ such that for every $H_i\in \mathcal{H}$ the graph $K_1+H_i$ is  $k$-metric dimensional and $\dim_k(H_i)=\vert V_i \vert$.  In such a situation, since the vertex of $K_1$ does not distinguish any pair of vertices belonging to $V_i$, we have that either  $\dim_k(K_1+H_i)=\vert V_i \vert$, in which case the vertex of $K_1$ does not belong to any $k$-metric basis of $K_1+H_i$, or $\dim_k(K_1+H_i)=\vert V_i \vert+1$,  in which case the vertex of $K_1$  belongs to any $k$-metric basis of $K_1+H_i$. Thus, Theorem \ref{theoG_K1HBound} leads to $ \dim_k(G\odot \mathcal{H})\le \sum_{i=1}^n \vert V_i\vert.$ As shown in Corollary \ref{PropoTheUpperboundTight}, the equality is attained.
For instance, we can take $k=2$ and every $H_i=K_r$, where $r\ge 2$, or $k=3$ and every $H_i=P_4$, or $k=4$ and every $H_i=C_5$.

Since for any $x,y\in V(H)$ it holds $N_{H}(x)\triangledown N_{H}(y)=N_{\bar{H}}(x)\triangledown N_{\bar{H}}(y)$, where $\bar{H}$ denotes the complement of graph $H$, we deduce that $(N_{H}(x)\triangledown N_{H}(y))\cup\{x,y\}=(N_{\bar{H}}(x)\triangledown N_{\bar{H}}(y))\cup\{x,y\}$.  Therefore, the next result is deduced.

\begin{lemma}\label{lemComplement}
Let $H$ be a non-trivial graph such that the vertex of $K_1$ does not belong to any $k$-metric basis of $K_1+H$. Any $k$-metric basis of $K_1+H$ is $k$-metric basis of $K_1+\bar{H}$ and, therefore $\dim_k(K_1+H)=\dim_k(K_1+\bar{H})$.
\end{lemma}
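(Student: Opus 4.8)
The plan is to fix an arbitrary $k$-metric basis $B$ of $K_1+H$, use the hypothesis to locate $B$ inside $V(H)$, check that $B$ is then a $k$-metric generator for $K_1+\bar H$, and finally promote this to the statement about bases (and about $\dim_k$) by a symmetry argument. Write $v$ for the vertex of $K_1$. Since, by hypothesis, $v$ belongs to no $k$-metric basis of $K_1+H$, we have $B\subseteq V(H)$. I would keep in mind the two distance identities extracted in the proof of Proposition~\ref{kMectricDimK1H}, valid in any join $K_1+F$: for distinct $x,y\in V(F)$, $\mathcal{D}_{K_1+F}(x,y)=(N_F(x)\triangledown N_F(y))\cup\{x,y\}$, and for $y\in V(F)$, $\mathcal{D}_{K_1+F}(v,y)=(V(F)\setminus N_F(y))\cup\{v\}$; I would use them for $F=H$ and for $F=\bar H$.

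The pairs $x,y\in V(H)=V(\bar H)$ are handled by the observation recorded just before the lemma: since $N_H(x)\triangledown N_H(y)=N_{\bar H}(x)\triangledown N_{\bar H}(y)$, the first identity above gives $\mathcal{D}_{K_1+\bar H}(x,y)=\mathcal{D}_{K_1+H}(x,y)$, so $|B\cap\mathcal{D}_{K_1+\bar H}(x,y)|=|B\cap\mathcal{D}_{K_1+H}(x,y)|\ge k$ because $B$ is a $k$-metric generator for $K_1+H$.

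The remaining pairs, $v$ against a vertex $y\in V(H)$, are where I expect the real work. Since $v\notin B$, the second identity (for $F=\bar H$) gives $|B\cap\mathcal{D}_{K_1+\bar H}(v,y)|=|B\cap(V(H)\setminus N_{\bar H}(y))|=|B\cap N_H[y]|$, so what has to be shown is $|B\cap N_H[y]|\ge k$ for every $y\in V(H)$. But the hypothesis that $B$ is a $k$-metric generator for $K_1+H$ only hands us the ``dual'' inequality $|B\cap(V(H)\setminus N_H(y))|\ge k$, so one really has to use that $v$ lies in \emph{no} $k$-metric basis of $K_1+H$. My route would be: by Lemma~\ref{K1_H_is_CH_dimensional} this hypothesis forces $K_1+H$ to be $\mathcal{C}(H)$-metric dimensional, whence $k\le\mathcal{C}(H)=\mathcal{C}(\bar H)$, and then combine this with Proposition~\ref{propK1Belong} — which would otherwise force a tight pair $(v,y)$ into every basis when $\mathcal{C}(H)=n'-\Delta(H)+1$ — to constrain the degrees of $H$ enough to conclude $|B\cap N_H[y]|\ge k$ for all $y$. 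Pinning this down, i.e.\ reconciling the asymmetry between $H$ and $\bar H$ on these ``apex'' pairs, is the main obstacle.

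Granting the previous step, $B$ is a $k$-metric generator for $K_1+\bar H$, so $\dim_k(K_1+\bar H)\le|B|=\dim_k(K_1+H)$. Since $\bar{\bar H}=H$, and (by the structural facts just invoked) the vertex of $K_1$ is likewise in no $k$-metric basis of $K_1+\bar H$, running the whole argument with $H$ and $\bar H$ interchanged gives $\dim_k(K_1+H)\le\dim_k(K_1+\bar H)$. Hence $\dim_k(K_1+H)=\dim_k(K_1+\bar H)$, and any $k$-metric basis $B$ of $K_1+H$, being a $k$-metric generator of $K_1+\bar H$ of this minimum cardinality, is a $k$-metric basis of $K_1+\bar H$, which is the claim.
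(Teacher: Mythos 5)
Your handling of the pairs $x,y\in V(H)$ reproduces the paper's \emph{entire} argument: the identity $(N_{H}(x)\triangledown N_{H}(y))\cup\{x,y\}=(N_{\bar{H}}(x)\triangledown N_{\bar{H}}(y))\cup\{x,y\}$ stated in the sentence preceding the lemma is all the justification the paper gives, and it only covers those pairs. The step you single out as ``the main obstacle'' --- establishing $|B\cap N_H[y]|\ge k$ for the apex pairs $(v,y)$ of $K_1+\bar H$ --- is left unproved in your proposal (``granting the previous step''), so as written the proof is incomplete; but you have correctly located the exact point at which the paper's own deduction is silent.

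The more serious news is that this gap cannot be closed, because the inequality $|B\cap N_H[y]|\ge k$ can fail under the lemma's hypotheses. Take $H=P_7$ and $k=3$. By Proposition \ref{value-fans}(ii) we have $\dim_3(F_{1,7})=7$, and by Proposition \ref{K1-not-belong} (equivalently, Lemmas \ref{lemma-Fan-mayor-2k} and \ref{No-Belong-CenterK1H_k}) the vertex $v$ of $K_1$ lies in no $3$-metric basis of $K_1+P_7$; hence the hypothesis of Lemma \ref{lemComplement} holds and the unique $3$-metric basis is $B=V(P_7)$. Yet for an end-vertex $u_1$ of $P_7$ one computes $\mathcal{D}_{K_1+\bar{P}_7}(v,u_1)=N_{P_7}[u_1]\cup\{v\}=\{u_1,u_2,v\}$, a set of cardinality exactly $3$, so by Lemma \ref{lemmaBelongKBasis} every $3$-metric basis of $K_1+\bar{P}_7$ must contain $v$; consequently $B$ is not even a $3$-metric generator for $K_1+\bar{P}_7$, and indeed $\dim_3(K_1+\bar{P}_7)=8\neq 7=\dim_3(K_1+P_7)$. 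Your proposed repair via Lemma \ref{K1_H_is_CH_dimensional} and Proposition \ref{propK1Belong} gives, through Proposition \ref{kMectricDimK1H} applied to $\bar H$, only the bound $k\le n'-\Delta(\bar H)+1=\delta(H)+2$, and the boundary case $k=\delta(H)+2$ (which is what occurs for $P_7$, $k=3$) is precisely where the argument breaks: one would need $k\le\deg_H(y)+1$ for every $y$ just for $|B\cap N_H[y]|\ge k$ to be possible. So the obstacle you flagged is not an artifact of your approach --- the lemma requires an additional hypothesis ruling out tight apex pairs in $K_1+\bar H$, and no completion of the argument as stated can succeed.
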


By Corollary \ref{dimensional-fan-wheel}  the wheel graph $K_1+C_r=W_{1,r}$ is $4$-metric dimensional for $r\ge 7$. Therefore, the next lemma makes only sense for $k\le 4$. We do not consider the case $k=1$, since it has been previously studied in \cite{Buczkowski2003}.

\begin{lemma}\label{lemmaW1n-K_Plus_2}
Let $C_r$ be a cycle graph of order $r\ge 7$, and let $k\in \{2,3,4\}$. If there exists $S\subseteq V(C_r)$ such that $|\mathcal{D}_{W_{1,r}}(x,y)\cap S|\ge k$ for every $x,y\in V(C_r)$, then $|S|\ge k+2$.
\end{lemma}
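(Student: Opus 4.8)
The plan is to throw away almost all of the hypothesis and keep only the constraints coming from pairs of \emph{adjacent} vertices of $C_r$, then finish with a double-counting (averaging) argument over the $r$ cyclic windows of length $4$. First I would identify the sets $\mathcal{D}_{W_{1,r}}(x,y)$ for $x,y\in V(C_r)$. Since $W_{1,r}=K_1+C_r$ has diameter at most $2$, the observation from Section~\ref{sectionDimensionalCorona} gives $\mathcal{D}_{W_{1,r}}(x,y)=\bigl(N_{W_{1,r}}(x)\triangledown N_{W_{1,r}}(y)\bigr)\cup\{x,y\}$. Labelling $V(C_r)=\{v_0,\dots,v_{r-1}\}$ with indices modulo $r$ and letting $c$ be the vertex of $K_1$, we have $N_{W_{1,r}}(v_i)=\{v_{i-1},v_{i+1},c\}$, so for an adjacent pair $v_i,v_{i+1}$ the common neighbour $c$ cancels in the symmetric difference and one gets
$$\mathcal{D}_{W_{1,r}}(v_i,v_{i+1})=\{v_{i-1},v_i,v_{i+1},v_{i+2}\}\subseteq V(C_r),\qquad i\in\{0,1,\dots,r-1\},$$
a set of four pairwise distinct vertices of $C_r$ because $r\ge 7$ (in particular $v_{i-1}\neq v_{i+2}$). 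Write $W_i$ for the set on the right; as $i$ runs over $\mathbb{Z}_r$ these are exactly the $r$ windows of four consecutive vertices of the cycle.

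Next, since $S\subseteq V(C_r)$, applying the hypothesis to each of the $r$ adjacent pairs yields $|W_i\cap S|\ge k$ for every $i$. I would then count the pairs $(i,v)$ with $v\in W_i\cap S$ in two ways. Summing over $i$ gives $\sum_{i=0}^{r-1}|W_i\cap S|\ge kr$. On the other hand, each vertex of $C_r$ lies in exactly four of the windows $W_0,\dots,W_{r-1}$ (which uses only $r\ge 4$), so the same count equals $4|S|$. Hence $4|S|\ge kr$, that is $|S|\ge kr/4\ge 7k/4$. Reading off the three cases: for $k=2$ we get $|S|\ge\lceil 7/2\rceil=4=k+2$; for $k=3$ we get $|S|\ge\lceil 21/4\rceil=6>5=k+2$; and for $k=4$ we get $|S|\ge r\ge 7>6=k+2$. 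In each case $|S|\ge k+2$, as claimed.

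The argument is short, so there is no real obstacle; the only points that need care are the bookkeeping ones. One must verify that $\mathcal{D}_{W_{1,r}}(v_i,v_{i+1})$ is exactly the length-$4$ window (this is where $r\ge 7$ — or at least $r\ge 4$ — is used, and $r=7$ with $k=2$ is the tight instance of the lemma), and one should note that the constraints coming from the distance-$2$ pairs and the far pairs of $C_r$ are never invoked. If one wanted a statement for a wider range of $k$, or a constant better than the crude $7k/4$, those additional pairs would have to be used, but for $k\in\{2,3,4\}$ the windows of length $4$ already force $|S|\ge k+2$.
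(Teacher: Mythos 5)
Your proof is correct, and it takes a genuinely different route from the paper's. Both arguments begin the same way, by computing $\mathcal{D}_{W_{1,r}}(u_i,u_{i+1})=\{u_{i-1},u_i,u_{i+1},u_{i+2}\}$ and using only the $r$ constraints coming from adjacent pairs of the cycle. The paper then selects just \emph{two} of these windows, taken four steps apart: for $r\ge 8$ they are disjoint, giving $|S|\ge 2k\ge k+2$ at once, while for $r=7$ they overlap in one vertex, giving only $|S|\ge 2k-1$, which suffices for $k\in\{3,4\}$ but forces a short ad hoc case analysis to rule out $|S|=3$ when $k=2$. You instead use all $r$ windows and double count incidences: each vertex of $C_r$ lies in exactly four windows, so $4|S|\ge kr$ and hence $|S|\ge\lceil kr/4\rceil\ge\lceil 7k/4\rceil$, which equals or exceeds $k+2$ in all three cases. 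The averaging argument buys uniformity --- no split on $r$ and no case analysis for the tight instance $r=7$, $k=2$ --- and it even yields a slightly sharper conclusion for $r=7$, $k=3$ (namely $|S|\ge 6$ rather than $5$, consistent with $\dim_3(W_{1,7})=6$ from Proposition \ref{value-wheels}), whereas the paper's two-window argument is marginally more economical for $r\ge 8$, where disjointness immediately gives $|S|\ge 2k$. Your bookkeeping (distinctness of the four vertices in a window, each vertex covered exactly four times, and the diameter-two identity for $\mathcal{D}_{W_{1,r}}$) is all justified for $r\ge 7$, so the proof is complete.
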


\begin{proof}
Let $V(C_r)=\{u_0,u_2,...,u_{r-1}\}$ be the vertex set of the cycle $C_r$. The subscripts of $u_i\in V(C_r)$ will be taken modulo $r$. Notice that $\mathcal{D}_{W_{1,r}}(u_i,u_{i+1})=\{u_{i-1},u_i,u_{i+1},u_{i+2}\}$.

We first consider the case  $r\ge 8$. Since $\mathcal{D}_{W_{1,r}}(u_i,u_{i+1})\cap \mathcal{D}_{W_{1,r}}(u_{i+4},u_{i+5})=\emptyset$,  $|\mathcal{D}_{W_{1,r}}(u_i,u_{i+1})\cap S|\ge k$ and $|\mathcal{D}_{W_{1,r}}(u_{i+4},u_{i+5})\cap S|\ge k$, we deduce that $|S|\ge 2k$. Thus, for $k\ge 2$ we have that $|S|\ge k+2$.

We now consider the case $r=7$. Since $\mathcal{D}_{W_{1,r}}(u_i,u_{i+1})\cap \mathcal{D}_{W_{1,r}}(u_{i+4},u_{i+5})=\{u_{i+6}\}$, in this case we have  $|S|\ge 2k-1$. So for $k\in\{3,4\}$ it holds $|S|\ge k+2$. Now we take $k=2$. Suppose that  $|S|= 3$. If $S$ is composed by non-consecutive vertices, say $S=\{u_i,u_{i+2},u_{i+4}\}$, then $\vert \mathcal{D}_{W_{1,r}}(u_{i+4},u_{i+5})\cap S \vert=1$, which is a contradiction. If there are two consecutive vertices in $S$, say $u_i,u_{i+1}\in S$, then $\vert \mathcal{D}_{W_{1,r}}(u_{i+3},u_{i+4})\cap S \vert\le 1$, which is a contradiction. Hence, $|S|\ge 4$ and, as a consequence, for $k=2$ we have that $|S|\ge k+2$.
\end{proof}

In order to present our next result, we need to introduce a new notation. Given a family of graphs $\mathcal{H}=\{H_1,\ldots,H_n\}$, we define $\mathcal{\overline{H}}$ as the family of the complement graphs of each $H_i\in\mathcal{H}$, \textit{i.e.}, $\mathcal{\overline{H}}=\{\overline{H}_1,\ldots,\overline{H}_n\}$.

\begin{theorem}\label{theoG_K1HEqual}
Let $G$ be a connected graph of order $n\ge 2$ and let $\mathcal{H}$ be a family of $n$ connected non-trivial graphs. If for every $H_i\in\mathcal{H}$ it holds  $D(H_i)\ge 6$ or $H_i$ is a cycle graph of order greater than or equal to seven, then for any $k\in\{1,\ldots,\mathcal{C}(\mathcal{H})\}$,
 $$\dim_k(G\odot\mathcal{H})=\dim_k(G\odot\mathcal{\overline{H}})=\sum_{i=1}^n\dim_k(K_1+ H_i).$$
\end{theorem}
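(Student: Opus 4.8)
The plan is to combine Theorem~\ref{theoG_K1HBound} (the upper bound) with a matching lower bound obtained by refining the lower-bound machinery of Corollary~\ref{corollaryBelongKBasis} and Lemma~\ref{lemmaKCorona}. First I would dispose of the equality $\dim_k(G\odot\mathcal{H})=\dim_k(G\odot\mathcal{\overline{H}})$: since $D(H_i)\ge 6$ or $H_i$ is a cycle of order $\ge 7$, for every pair $x,y\in V_i$ the distinctive set inside $H_i$ coincides with $(N_{H_i}(x)\triangledown N_{H_i}(y))\cup\{x,y\}$ only after we pass through $K_1+H_i$, where the diameter drops to $2$; by Lemma~\ref{lemComplement} applied to each $H_i$ (whose validity requires that the $K_1$-vertex lies in no $k$-metric basis of $K_1+H_i$, which I verify below via Lemma~\ref{K1_H_is_CH_dimensional}), we get $\dim_k(K_1+H_i)=\dim_k(K_1+\overline{H}_i)$ for each $i$, and hence the two corona quantities agree once the main formula is proved for $\mathcal{H}$.

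For the main equality, the upper bound $\dim_k(G\odot\mathcal{H})\le\sum_i\bigl(\dim_k(K_1+H_i)-f(H_i,k)\bigr)$ is Theorem~\ref{theoG_K1HBound}, so it suffices to show (a) that $f(H_i,k)=0$ for every $i$, i.e.\ the $K_1$-vertex belongs to no $k$-metric basis of $K_1+H_i$, and (b) the lower bound $\dim_k(G\odot\mathcal{H})\ge\sum_i\dim_k(K_1+H_i)$. For (a): when $D(H_i)\ge 6$, pick $x,y\in V(H_i)$ realizing $\mathcal{C}(H_i)=\min_{x,y\in V_i}|N_{H_i}(x)\triangledown N_{H_i}(y)\cup\{x,y\}|$; because the diameter is large, $\mathcal{C}(H_i)=\mathcal{C}(\mathcal{H})\le k'$ and $n_i-\Delta(H_i)+1$ is strictly larger (a vertex of maximum degree still misses many vertices at distance $\ge 2$), so by Proposition~\ref{kMectricDimK1H}, $K_1+H_i$ is $\mathcal{C}(H_i)$-metric dimensional, and any $k$-metric basis with $k\le\mathcal{C}(H_i)$ may be chosen inside $V(H_i)$; more carefully, one argues that removing the $K_1$-vertex from a purported basis and replacing it by a suitable vertex of $H_i$ still distinguishes every pair $\mathcal{C}(H_i)$-fold, using that for such pairs the distinctive set lies entirely in $V(H_i)$. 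The cycle case $H_i=C_r$, $r\ge 7$, is handled by Lemma~\ref{lemmaW1n-K_Plus_2}, which forces any distinguishing set inside $V(C_r)$ to have size $\ge k+2=\mathcal{C}(C_r)$; combined with Lemma~\ref{K1_H_is_CH_dimensional} this gives $f(C_r,k)=0$ and also $\dim_k(K_1+C_r)\ge k+2$.

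For the lower bound (b): let $S$ be a $k$-metric basis of $G\odot\mathcal{H}$. By Lemma~\ref{lemmaKCorona}(iii), $S\cap V(G)=\emptyset$, so $S=\bigcup_i S_i$ with $S_i=S\cap V_i$, and by Lemma~\ref{lemmaKCorona}(iv) each $S_i$ is a $k$-metric generator for $H_i$. The key extra point is that $S_i$ must in fact distinguish every pair $x,y\in V_i$ using at least $k$ vertices of $(N_{H_i}(x)\triangledown N_{H_i}(y))\cup\{x,y\}$ — this is exactly the condition defining a $k$-metric generator of $K_1+H_i$ restricted to $V(H_i)$, because in $G\odot\mathcal{H}$ the only vertices distinguishing $x,y$ are those of $V_i\cup\{v_i\}$, and $v_i\notin S$, so the $k$ distinguishers all lie in $V_i$ and (since $d_{G\odot\mathcal{H}}$ restricted to $N[V_i]$ behaves like $d_{K_1+H_i}$ on the relevant pairs) they lie in $(N_{H_i}(x)\triangledown N_{H_i}(y))\cup\{x,y\}$. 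Hence $S_i\cup\{v_i\}$ — equivalently $S_i$ together with the $K_1$-vertex — would be a $k$-metric generator of $K_1+H_i$; since $f(H_i,k)=0$, the $K_1$-vertex is never needed, so $S_i$ itself is a $k$-metric generator of $K_1+H_i$ and $|S_i|\ge\dim_k(K_1+H_i)$. Summing over $i$ gives $|S|\ge\sum_i\dim_k(K_1+H_i)$, which together with the upper bound (and $f(H_i,k)=0$) closes the proof. The main obstacle is step (b): justifying rigorously that the distinctive vertices used by $S_i$ inside $V_i$ are confined to the symmetric-difference set $(N_{H_i}(x)\triangledown N_{H_i}(y))\cup\{x,y\}$ rather than the possibly larger $\mathcal{D}_{H_i}(x,y)$ — this is precisely where the hypotheses $D(H_i)\ge 6$ or $H_i=C_r$ with $r\ge 7$ are used, since they guarantee, via the pairs realizing $\mathcal{C}(H_i)$, that no distinguishing contribution from vertices of $H_i$ at distance $\ge 3$ can be recruited cheaply.
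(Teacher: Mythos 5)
Your overall architecture (upper bound from Theorem~\ref{theoG_K1HBound}, lower bound by showing $S_i=S\cap V_i$ is a $k$-metric generator for $K_1+H_i$, complement equality via Lemma~\ref{lemComplement}) matches the paper's. But your step (b) has a genuine gap at its central claim. For a set $S_i\subseteq V(H_i)$ to be a $k$-metric generator for $K_1+H_i$ it must do two separate things: distinguish every pair $x,y\in V(H_i)$ at least $k$ times, \emph{and} distinguish every pair $(x,v)$ at least $k$ times, where $v$ is the $K_1$-vertex and $\mathcal{D}_{K_1+H_i}(x,v)=\left(V(H_i)-N_{H_i}(x)\right)\cup\{v\}$. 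The first condition does come for free from $S$ being a $k$-metric basis of $G\odot\mathcal{H}$ (and needs no hypothesis on $H_i$: distances inside $V_i$ are truncated at $2$ through $v_i$, so $\mathcal{D}_{G\odot\mathcal{H}}(x,y)\cap V_i=(N_{H_i}(x)\triangledown N_{H_i}(y))\cup\{x,y\}$ automatically). The second condition requires that every $x\in V_i$ have at least $k$ \emph{non-neighbors} in $S_i$, and this is exactly what your argument never establishes. Your inference ``$S_i\cup\{v_i\}$ would be a $k$-metric generator of $K_1+H_i$; since $f(H_i,k)=0$ the $K_1$-vertex is never needed, so $S_i$ itself is one'' is a non sequitur: $f(H_i,k)=0$ says the apex lies in no $k$-metric \emph{basis}, not that pairs \emph{involving} the apex are automatically resolved $k$-fold by any set that resolves the pairs inside $V(H_i)$; and adding $v_i$ to $S_i$ contributes only one distinguisher to the pair $(x,v_i)$, so it cannot repair a deficit. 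A priori $S_i$ could be concentrated inside $N_{H_i}(x)$ for some $x$ and then $|S_i\cap\mathcal{D}_{K_1+H_i}(x,v_i)|<k$.

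This missing step is precisely where the hypotheses $D(H_i)\ge 6$ or $H_i=C_r$ with $r\ge 7$ are actually consumed, and it is the technical heart of the paper's proof. For cycles the paper uses Lemma~\ref{lemmaW1n-K_Plus_2} to get $|S_i|\ge k+2$, and since each vertex of $C_r$ has only two neighbors this leaves at least $k$ elements of $S_i$ non-adjacent to any given $x$. For $D(H_i)\ge 6$ the paper runs a contradiction argument: assuming some $x$ has fewer than $k$ non-neighbors in $S_i$, it produces a vertex $y$ far from the ``adjacent part'' $F_i(x)=S_i\cap N_{H_i}(x)$ and uses the requirement $|\mathcal{D}_{G\odot\mathcal{H}}(y,w)\cap S_i|\ge k$ for all $w$ to force $D(H_i)\le 5$. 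Your closing remark that the hypotheses prevent ``distinguishing contributions from vertices at distance $\ge 3$ being recruited cheaply'' misidentifies their role; they are needed to guarantee enough non-neighbors of each $x$ inside $S_i$, i.e., to handle the apex pairs. As written, the lower bound (b) is not proved. (Your part (a), the claim $f(H_i,k)=0$ in the diameter case, is also only sketched by ``one argues that\ldots'', but that part can in fact be recovered once the non-neighbor count is established, e.g.\ via Lemma~\ref{No-Belong-CenterK1H_k}.)
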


\begin{proof}
The case $k=1$, where every $H_i$ is isomorphic to a fixed graph $H$, was studied in \cite{Yero2011}. Moreover, the procedure to prove the case when $k=1$ and $\mathcal{H}$ contains at least two non-isomorphic graphs, is quite similar to the one presented in \cite{Yero2011}. Hence, from now on we assume that $k\ge 2$.

By Remark \ref{Comparakenproductoyk1+H}, if for every $H_i\in\mathcal{H}$ of order $n_i$, the graph $K_1+H_i$ is $k_i$-metric dimensional, then for $k\in\{1,\ldots,\min_{i\in\{1,\ldots,n\}}\{k_i\}\}$ there exist $k$-metric bases of $G\odot\mathcal{H}$.  By Lemma \ref{K1_H_is_CH_dimensional} and Proposition \ref{K1-not-belong}, we deduce that $\mathcal{C}(\mathcal{H})=\min_{i\in\{1,\ldots,n\}}\{k_i\}$.

Let $S$ be a $k$-metric basis of $G\odot\mathcal{H}$. We will show that $S_i=S\cap V_i$ is a $k$-metric generator for $\langle v_i\rangle+H_i$. Notice that by Lemma \ref{lemmaKCorona}, for every $x,y\in V_i$ we have that $|S_i\cap\mathcal{D}_{\langle v_i\rangle+H_i}(x,y)|=|S_i\cap\mathcal{D}_{G\odot\mathcal{H}}(x,y)|\ge k$. Now we differentiate two cases in order to show that for every $x\in V_i$  it holds $|S_i\cap\mathcal{D}_{\langle v_i\rangle+H_i}(x,v_i)|\ge k$.
\\
\\
\noindent {Case} 1: $H_i$ is a cycle graph of order $n'\ge 7$. Since $n'\ge 7$, by Lemma \ref{lemmaW1n-K_Plus_2}, we have that $|S_i|\ge k+2$. Notice that for any $x\in V_i$ there exist exactly two vertices $y,z\in V_i$ such that $d_{H_i}(x,y)=d_{H_i}(x,z)=1$. Since $|S_i|\ge k+2$, for every $x\in V_i$ we have that there exist at least $k$ elements $u$ of $S_i$ such that $d_{H_i}(u,x)>1$, and as a consequence, $d_{\langle v_i\rangle+H_i}(u,x)=2$. Hence, $|S_i\cap\mathcal{D}_{\langle v_i\rangle+H_i}(x,v_i)|\ge k$.
\\
\\
\noindent {Case} 2: $D(H_i)\ge 6$. If for every $x\in V_i$ there exist at least $k$ elements in $S_i$ which are not adjacent to $x$, then the result holds. Hence, given $z\in V_i$, we define $R_i(z)=\left(V_i-N_{H_i}(z)\right)\cap S_i$. Suppose that there exists $x\in V_i$ such that $0\le |R_i(x)|\le k-1$.

Now, let $F_i(x)=S_i-R_i(x)$. Since $|S_i|\ge k$, we have that $F_i(x)\ne\emptyset$. If $V_i=F_i(x)\cup \{x\}$, then $D(H_i)\le 2$, which is a contradiction.
Now, if for every $y\in V_i-\left( F_i(x)\cup\{x\}\right)$ there exists $z\in F_i(x)$ such that $d_{H_i}(y,z)=1$, then $D(H_i)\le 4$, which is a contradiction. So, we assume that there exists a vertex $y\in V_i-\left(F_i(x)\cup\{x\}\right)$ such that $d_{H_i}(y,z)>1$, for every $z\in F_i(x)$. If $V_i=F_i(x)\cup\{x,y\}$, then $y\sim x$ and, as a consequence, $D(H_i)=2$, which is also a contradiction. Hence, $V_i-(F_i(x)\cup\{x,y\})\ne\emptyset$.

Since $N_{H_i}(y)\cap F_i(x)=\emptyset$ and  $|R_i(x)|<k$,  and also for any $w\in V_i-(F_i(x)\cup\{x,y\})$ we have that $\mathcal{D}_{G\odot\mathcal{H}}(y,w)=\left(N_{H_i}(y)\triangledown N_{H_i}(w)\right)\cup \{y,w\}$ and $|\mathcal{D}_{G\odot\mathcal{H}}(y,w)\cap S_i|\ge k$, we deduce that $N_{H_i}(w)\cap F_i(x)\ne \emptyset$, and this leads to $D(H_i)\le 5$, which is also a contradiction.

Therefore, if $D(H_i)\ge 6$, then for every $x\in V_i$ we have that $|R_i(x)|\ge k$ and, as a consequence,  for every $x\in V_i$ there exist at least $k$ vertices  $u\in S_i$ such that $d_{\langle v_i\rangle+H_i}(u,x)=2$. Hence, $|S_i\cap\mathcal{D}_{\langle v_i\rangle+H_i}(x,v_i)|\ge k$.

We have shown that $S_i$ is a $k$-metric generator for $\langle v_i\rangle+H_i$ and, as a consequence, $\dim_k(\langle v_i\rangle+ H_i)\le \vert S_i\vert$. Now, by Lemma \ref{lemmaKCorona} (iii) we have that $V(G)\cap S=\emptyset$ and, consequently, $S=\bigcup_{i=1}^n S_i$. Therefore, $$\dim_k(G\odot\mathcal{H})=|S|=\sum_{i=1}^n |S_i|\ge\sum_{i=1}^n\dim_k(K_1+ H_i).$$
Finally, by Theorem \ref{theoG_K1HBound} and Lemma \ref{lemComplement}, the proof is completed.
\end{proof}

By Theorems \ref{theoG_K1HBound} and \ref{theoG_K1HEqual} we deduce the following result.

\begin{proposition}\label{K1-not-belong}
Let $H$ be a connected graph such that $K_1+H$ is $k'$-metric dimensional and let $k\in \{1,...,k'\}$. If $D(H)\ge 6$ or $H$ is a cycle graph of order greater than or equal seven, the vertex of $K_1$ does not belong to any $k$-metric basis of $K_1+H$.
\end{proposition}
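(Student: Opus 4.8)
The plan is to deduce the statement from the upper bound of Theorem~\ref{theoG_K1HBound} together with the exact formula of Theorem~\ref{theoG_K1HEqual}, applied to a corona graph built out of $H$. Given $H$ with $D(H)\ge 6$ or $H\cong C_r$ for some $r\ge 7$, and assuming $K_1+H$ is $k'$-metric dimensional, I would fix an arbitrary $k\in\{1,\ldots,k'\}$, take $G=K_2$, and let $\mathcal{H}=\{H,H\}$ be the family consisting of $n=2$ copies of $H$.

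First I would verify that both theorems apply to this $G$, $\mathcal{H}$ and this value $k$. Each member of $\mathcal{H}$ has diameter at least $6$ or is a cycle of order at least $7$, so the hypothesis of Theorem~\ref{theoG_K1HEqual} is met; moreover $\mathcal{C}(\mathcal{H})=\mathcal{C}(H)$, and Proposition~\ref{kMectricDimK1H} gives $k'=\min\{\mathcal{C}(H),\,n'-\Delta(H)+1\}\le\mathcal{C}(H)$, so $k\in\{1,\ldots,\mathcal{C}(\mathcal{H})\}$ and Theorem~\ref{theoG_K1HEqual} yields $\dim_k(G\odot\mathcal{H})=\dim_k(K_1+H)+\dim_k(K_1+H)=2\dim_k(K_1+H)$. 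On the other hand, each $K_1+H_i\cong K_1+H$ is $k'$-metric dimensional, so the quantity $\min_i\{k_i\}$ appearing in Theorem~\ref{theoG_K1HBound} equals $k'$, and that theorem applies for the same range of $k$, giving $\dim_k(G\odot\mathcal{H})\le\sum_{i=1}^{2}\bigl(\dim_k(K_1+H)-f(H,k)\bigr)=2\dim_k(K_1+H)-2f(H,k)$.

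Comparing the two relations gives $2\dim_k(K_1+H)\le 2\dim_k(K_1+H)-2f(H,k)$, hence $f(H,k)\le 0$; since $f$ is a binary function with values in $\{0,1\}$, this forces $f(H,k)=0$, which is precisely the assertion that the vertex of $K_1$ lies in no $k$-metric basis of $K_1+H$. As $k\in\{1,\ldots,k'\}$ was arbitrary, this finishes the argument.

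At this level there is no real obstacle: all the substance has already been handled inside the proofs of Theorems~\ref{theoG_K1HBound} and~\ref{theoG_K1HEqual}, and the only thing to watch is the bookkeeping on the admissible range of $k$ (ensuring $k'\le\mathcal{C}(\mathcal{H})$ so that Theorem~\ref{theoG_K1HEqual} is available, and $k'=\min_i\{k_i\}$ so that Theorem~\ref{theoG_K1HBound} is available), both of which follow from Proposition~\ref{kMectricDimK1H}. If one prefers a self-contained route that does not go through Theorem~\ref{theoG_K1HEqual}, the direct argument is as follows: suppose $B$ is a $k$-metric basis of $K_1+H$ containing the vertex $v$ of $K_1$; since $v$ is adjacent to every vertex of $H$, it distinguishes no pair of vertices of $H$, so the only pairs for which $v$ could be needed are the pairs $(v,x)$ with $x\in V(H)$, where $\mathcal{D}_{K_1+H}(v,x)=\bigl(V(H)-N_H(x)\bigr)\cup\{v\}$; one then shows, exactly as in Cases~1 and~2 of the proof of Theorem~\ref{theoG_K1HEqual}, that $B\cap V(H)$ already contains at least $k$ vertices outside $N_H(x)$ for every $x\in V(H)$, so $B-\{v\}$ remains a $k$-metric generator, contradicting the minimality of $B$.
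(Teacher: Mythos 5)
Your proposal is correct and follows exactly the paper's route: the paper derives Proposition \ref{K1-not-belong} precisely by playing the upper bound of Theorem \ref{theoG_K1HBound} (with the $f(H_i,k)$ correction) against the equality of Theorem \ref{theoG_K1HEqual}, which forces $f(H,k)=0$. Your write-up merely makes explicit the choice $G=K_2$, $\mathcal{H}=\{H,H\}$ and the bookkeeping $k\le k'\le\mathcal{C}(\mathcal{H})=\min_i\{k_i\}$ that the paper leaves implicit.
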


In order to present our next result we introduce a new definition. Given a family of $n$ graphs $\mathcal{H}$, we denote by $K_1\diamond\mathcal{H}$ the family of graphs formed by the graphs $K_1+H_i$ for every $H_i\in\mathcal{H}$, \textit{i.e.}, $K_1\diamond\mathcal{H}=\{K_1+H_1,K_1+H_2,\ldots,K_1+H_n\}$.

\begin{proposition}\label{propEqH-K1_H}
Let $G$ be a connected graph of order $n\ge 2$, let $\mathcal{H}$ be a family of $n$ connected graphs, and let $K_1+H_i$ be a $k_i$-metric dimensional graph for every $H_i\in\mathcal{H}$. If for every $H_i\in\mathcal{H}$ holds that $D(H_i)\ge 6$ or $H_i$ is a cycle graph of order greater than or equal to seven, then for any $k\in\{1,\ldots,\mathcal{C}(\mathcal{H})\}$, $$\dim_k(G\odot\mathcal{H})=\dim_k(G\odot\mathcal{\overline{H}})=\dim_k\left(G\odot (K_1\diamond\mathcal{H})\right).$$
\end{proposition}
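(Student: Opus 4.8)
The plan is to recognize that the genuinely new quantity here, $\dim_k(G\odot(K_1\diamond\mathcal{H}))$, is the $k$-metric dimension of a corona product whose $n$ attached graphs are the graphs $K_1+H_i$, each of which has a universal vertex and hence diameter at most $2$. Consequently Theorem~\ref{theoEqualCoronaDimH} applies to it directly, while the remaining two equalities are exactly the content of Theorem~\ref{theoG_K1HEqual}; so the whole proof amounts to invoking these two theorems with the ranges of $k$ properly aligned.

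First I would pin down the admissible range of $k$. Since each $K_1+H_i$ is $k_i$-metric dimensional and every $H_i$ satisfies $D(H_i)\ge 6$ or is a cycle of order at least seven, Proposition~\ref{K1-not-belong} shows that the vertex of $K_1$ belongs to no $k$-metric basis of $K_1+H_i$, whence Lemma~\ref{K1_H_is_CH_dimensional} gives that $K_1+H_i$ is $\mathcal{C}(H_i)$-metric dimensional, i.e.\ $k_i=\mathcal{C}(H_i)$. Therefore $\min_{i}\{k_i\}=\min_{i}\{\mathcal{C}(H_i)\}=\mathcal{C}(\mathcal{H})$ --- this is precisely the identity already exploited in the proof of Theorem~\ref{theoG_K1HEqual} --- so the range $k\in\{1,\dots,\mathcal{C}(\mathcal{H})\}$ in the statement coincides with $\{1,\dots,\min_i k_i\}$.

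Next I would apply Theorem~\ref{theoEqualCoronaDimH} to the corona $G\odot(K_1\diamond\mathcal{H})$: the family $K_1\diamond\mathcal{H}$ is a family of $n$ non-trivial connected graphs, each of diameter at most $2$ and each $k_i$-metric dimensional, and $G$ has order $n\ge 2$; hence for every $k\in\{1,\dots,\min_i k_i\}=\{1,\dots,\mathcal{C}(\mathcal{H})\}$ we obtain $\dim_k(G\odot(K_1\diamond\mathcal{H}))=\sum_{i=1}^n\dim_k(K_1+H_i)$. On the other hand, for the same range of $k$, Theorem~\ref{theoG_K1HEqual} gives $\dim_k(G\odot\mathcal{H})=\dim_k(G\odot\mathcal{\overline{H}})=\sum_{i=1}^n\dim_k(K_1+H_i)$. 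Chaining the three equalities finishes the proof.

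The only point requiring care is the alignment of the ranges of $k$ carried out in the first step --- that is, knowing that $K_1+H_i$ is \emph{precisely} $\mathcal{C}(H_i)$-metric dimensional, which is exactly where the hypothesis ``$D(H_i)\ge 6$ or $H_i$ is a cycle of order $\ge 7$'' is used (via Proposition~\ref{K1-not-belong} and Lemma~\ref{K1_H_is_CH_dimensional}). Once this is settled, the argument is a direct application of Theorems~\ref{theoEqualCoronaDimH} and~\ref{theoG_K1HEqual}, with no substantial obstacle.
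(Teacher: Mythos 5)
Your proposal is correct and follows essentially the same route as the paper: apply Theorem~\ref{theoEqualCoronaDimH} to $G\odot(K_1\diamond\mathcal{H})$ (since each $K_1+H_i$ has diameter at most $2$) and Theorem~\ref{theoG_K1HEqual} to the other two quantities, then chain the three equalities through $\sum_{i=1}^n\dim_k(K_1+H_i)$. Your extra step aligning the range of $k$ via Proposition~\ref{K1-not-belong} and Lemma~\ref{K1_H_is_CH_dimensional} is a detail the paper leaves implicit (it is carried out inside the proof of Theorem~\ref{theoG_K1HEqual}), but it does not change the approach.
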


\begin{proof}
Since for every $H_i\in\mathcal{H}$, it follows $D(K_1+H_i)=2$, by Theorem \ref{theoEqualCoronaDimH}, $\dim_k\left(G\odot (K_1\diamond\mathcal{H})\right)= \sum_{i=1}^n\dim_k(K_1+ H_i)$. Also, by Theorem \ref{theoG_K1HEqual}, $\dim_k(G\odot\mathcal{H})=\dim_k(G\odot\mathcal{\overline{H}})=\sum_{i=1}^n\dim_k(K_1+ H_i)$. So, the result follows.
\end{proof}

Next we consider some special classes of graphs of the form $K_1+H$, the so called fan graphs and wheel graphs.

\subsubsection{The particular case of fan graphs and wheel graphs }

In order to study the $k$-metric dimension of fan graphs, we will use the following notation.  Let $V(P_n)=\{u_1,u_2,...,u_n\}$ be the vertex set of the path $P_n$ and let $F_{1,n}=\langle u\rangle+P_n$. We assume that $u_i\sim u_{i+1}$  for each $i\in \{ 1,...,n-1\}$.

By Corollary \ref{dimensional-fan-wheel} we know that the fan graphs $F_{1,n}$, $n\ge 4$, are $3$-metric dimensional, so $\dim_k(F_{1,n})$ makes sense for $k\in \{1,2,3\}$.
In this section we study the cases $k=2$ and $k=3$, since the case $k=1$  was previously studied in \cite{Hernando2005}, that is:

$$\dim_1(F_{1,n})=\left\{\begin{array}{ll}
1, & \textrm{if $n=1$,}\\
2, & \textrm{if $n=2,3$,}\\
3, & \textrm{if $n=6$,}\\
{\left\lfloor{\frac{2n+2}{5}}\right\rfloor}, & \textrm{otherwise.}\\
\end{array}\right.
$$

We first present some useful lemmas.

\begin{lemma}\label{lemma-Fan-mayor-2k}
Let $k\in \{2,3\}$ and let $n\ge 6$ be an integer. For any $k$-metric basis $S$ of $F_{1,n}$ it holds $|S\cap V(P_n)|\ge 2k$.
\end{lemma}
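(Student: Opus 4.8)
The plan is to show that a $k$-metric basis of $F_{1,n}$ cannot be too concentrated on the apex vertex $u$, and then exploit the local structure of $P_n$. First I would recall from Proposition~\ref{kMectricDimK1H} that $F_{1,n}=K_1+P_n$ is $k$-metric dimensional exactly when $k=\min\{\mathcal{C}(P_n),n-\Delta(P_n)+1\}=\min\{3,n-1\}$, so for $n\ge 4$ the relevant range is $k\in\{1,2,3\}$, and for $n\ge 6$ we have $k\le 3<n-\Delta(P_n)+1$, meaning $u$ need not lie in every basis. Let $S$ be a $k$-metric basis and write $S_P=S\cap V(P_n)$; the goal is $|S_P|\ge 2k$. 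Since $|S|\le |S_P|+1$, it suffices to bound $|S|$ below appropriately, but in fact the cleaner route is to argue directly about $S_P$.

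The key observation is the one already used for wheels in Lemma~\ref{lemmaW1n-K_Plus_2}: in $F_{1,n}$ one computes $\mathcal{D}_{F_{1,n}}(u_i,u_{i+1})=\{u_{i-1},u_i,u_{i+1},u_{i+2}\}\cap V(P_n)$ (interior case), since the apex $u$ is adjacent to both $u_i$ and $u_{i+1}$ and hence never distinguishes them, and $u_j$ distinguishes $u_i,u_{i+1}$ iff $d_{F_{1,n}}(u_j,u_i)\ne d_{F_{1,n}}(u_j,u_{i+1})$, which on a path-with-apex (diameter $2$) reduces to $j\in\{i-1,i,i+1,i+2\}$. Next I would pick two ``far apart'' consecutive pairs: for $n\ge 8$ choose the pairs $(u_2,u_3)$ and $(u_6,u_7)$ (or more generally $(u_i,u_{i+1})$ and $(u_{i+4},u_{i+5})$), whose distinctive sets $\{u_1,u_2,u_3,u_4\}$ and $\{u_5,u_6,u_7,u_8\}$ are disjoint; since each must meet $S$ in at least $k$ vertices, $|S_P|\ge 2k$. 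For $n\in\{6,7\}$ the two windows of length $4$ must overlap, so this crude argument only gives $|S_P|\ge 2k-1$ (or $2k-2$), and I would need to recover the missing vertex by hand: I would take the remaining consecutive pairs into account and show that any $S_P$ of size $2k-1$ forces some pair $(u_i,u_{i+1})$ to be distinguished by fewer than $k$ vertices of $S_P$, a small finite case analysis (as in the $r=7$ case of Lemma~\ref{lemmaW1n-K_Plus_2}). One also has to handle the endpoint pairs, where $\mathcal{D}_{F_{1,n}}(u_1,u_2)=\{u_1,u_2,u_3\}$ has only three elements, so for $k=3$ this already forces $\{u_1,u_2,u_3\}\subseteq S$ and similarly $\{u_{n-2},u_{n-1},u_n\}\subseteq S$, which for $n\ge 6$ immediately yields $|S_P|\ge 6=2k$; for $k=2$ the endpoint pairs give size-$3$ windows that still help pin down the count in the small cases $n=6,7$.

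I expect the main obstacle to be precisely the small values $n=6$ and $n=7$, where the overlapping $4$-windows prevent the clean disjointness argument and one must instead rule out each configuration of a would-be basis of size $2k-1$ in $V(P_n)$; the rest ($n\ge 8$) is an immediate counting argument once the formula for $\mathcal{D}_{F_{1,n}}(u_i,u_{i+1})$ is recorded. A secondary point of care is making sure the endpoint distinctive sets are computed correctly (they are shorter than the interior ones), but this is routine. Altogether the proof is: (1) compute $\mathcal{D}_{F_{1,n}}$ on consecutive pairs; (2) for $n\ge 8$, invoke disjoint $4$-windows to get $|S_P|\ge 2k$; (3) for $n\in\{6,7\}$, combine the endpoint constraints (size-$3$ windows) with a short case check to upgrade $|S_P|\ge 2k-1$ to $|S_P|\ge 2k$.
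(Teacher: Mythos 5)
Your proposal is correct in substance, but it takes a detour that the paper avoids, and the detour creates work (the $n\in\{6,7\}$ case analysis) that you never actually carry out. The paper's entire proof is the observation you relegate to a ``secondary point of care'': the two \emph{endpoint} pairs have distinctive sets $\mathcal{D}_{F_{1,n}}(u_1,u_2)=\{u_1,u_2,u_3\}$ and $\mathcal{D}_{F_{1,n}}(u_{n-1},u_n)=\{u_{n-2},u_{n-1},u_n\}$, which are disjoint precisely when $n\ge 6$; since $S$ is a $k$-metric basis, each must contain at least $k$ vertices of $S$, and both are subsets of $V(P_n)$, so $|S\cap V(P_n)|\ge 2k$ for every $k\in\{2,3\}$ and every $n\ge 6$ in one stroke. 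Your main line of argument --- interior $4$-element windows $\{u_{i-1},u_i,u_{i+1},u_{i+2}\}$, disjoint only for $n\ge 8$, followed by a promised-but-not-executed finite case check for $n\in\{6,7\}$ when $k=2$ --- is sound but strictly dominated by the endpoint argument you already have in hand: note that each size-$3$ endpoint window needs $\ge k$ elements of $S$ even for $k=2$, so no case check is needed at all. In short, the key lemma-level idea (pick two consecutive pairs whose distinctive sets are disjoint and count) is the same as the paper's; the only difference is your choice of pairs, and choosing the endpoint pairs is what makes the proof three lines instead of a page. As written, your step (3) is the one genuine loose end, but it closes immediately once you apply the disjointness of the two size-$3$ windows rather than the size-$4$ ones.
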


\begin{proof}
Notice that $\mathcal{D}_{F_{1,n}}(u_1,u_2)=\{u_1,u_2,u_3\}$ and $\mathcal{D}_{F_{1,n}}(u_{n-1},u_n)= \{u_{n-2},u_{n-1},u_n\}$. Since $S$ is a $k$-metric basis of $F_{1,n}$, we have $|S\cap \mathcal{D}_{F_{1,n}}(u_1,u_2)|\ge k$ and $|S\cap \mathcal{D}_{F_{1,n}}(u_{n-1},u_n)|\ge k$. As $n\ge 6$, it holds  $\mathcal{D}_{F_{1,n}}(u_1,u_2)\cap \mathcal{D}_{F_{1,n}}(u_{n-1},u_n)=\emptyset$. Therefore, $|S\cap V(P_n)|\ge 2k$.
\end{proof}

\begin{lemma}\label{No-Belong-CenterK1H_k}
Let $H$ be a non-trivial graph, let $K_1 + H$ be a $k'$-metric dimensional graph, and let $k\in \{1,\ldots,k'\}$. If for every $k$-metric basis $S$ of $K_1 + H$ we have that $|S\cap V(H)|\ge k+\Delta(H)$, then the vertex of $K_1$ does not belong to any $k$-metric basis of $K_1 + H$.
\end{lemma}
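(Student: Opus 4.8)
The plan is to argue by contradiction. Let $v$ denote the vertex of $K_1$, and suppose that $v$ belongs to some $k$-metric basis $S$ of $K_1+H$. I will show that $S-\{v\}$ is itself a $k$-metric generator for $K_1+H$; since $|S-\{v\}|=|S|-1=\dim_k(K_1+H)-1$, this contradicts the minimality of $S$ and finishes the proof.

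First I would recall the structure of the distinctive vertex sets in $K_1+H$ established in the proof of Proposition \ref{kMectricDimK1H} (and used again in Proposition \ref{propK1Belong}): for $x,y\in V(H)$ one has $\mathcal{D}_{K_1+H}(x,y)=N_H(x)\triangledown N_H(y)\cup\{x,y\}$, which does not contain $v$; and for every $x\in V(H)$ one has $\mathcal{D}_{K_1+H}(x,v)=\left(V(H)-N_H(x)\right)\cup\{v\}$. So $v$ can only help to distinguish pairs of the form $\{x,v\}$ with $x\in V(H)$, and the whole argument reduces to checking that for such pairs the loss of $v$ can be absorbed by the other elements of $S$.

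The verification splits into two cases. For a pair $x,y\in V(H)$, since $v\notin\mathcal{D}_{K_1+H}(x,y)$ we get $|\mathcal{D}_{K_1+H}(x,y)\cap(S-\{v\})|=|\mathcal{D}_{K_1+H}(x,y)\cap S|\ge k$, because $S$ is a $k$-metric basis; so these pairs are still distinguished by at least $k$ vertices of $S-\{v\}$. For a pair $x,v$ with $x\in V(H)$, I need $|(V(H)-N_H(x))\cap S|\ge k$. Here is where the hypothesis enters: applied to the $k$-metric basis $S$ it gives $|S\cap V(H)|\ge k+\Delta(H)$, and since $|N_H(x)\cap S|\le |N_H(x)|\le\Delta(H)$ we obtain
$$|(V(H)-N_H(x))\cap S|=|S\cap V(H)|-|N_H(x)\cap S|\ge (k+\Delta(H))-\Delta(H)=k.$$
Thus the pairs $\{x,v\}$ remain distinguished by $k$ elements of $S-\{v\}$ as well, so $S-\{v\}$ is a $k$-metric generator of cardinality $|S|-1$, the desired contradiction.

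The proof is short; the only delicate point — and the place where the hypothesis is used in an essential way — is the treatment of the pairs $\{x,v\}$, where removing $v$ costs exactly one distinguishing vertex and this must be compensated by the non-neighbours of $x$ lying in $S\cap V(H)$. The bound $|S\cap V(H)|\ge k+\Delta(H)$ is tailored precisely so that, even against the worst case $|N_H(x)\cap S|=\Delta(H)$, at least $k$ such non-neighbours survive.
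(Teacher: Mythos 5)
Your proposal is correct and follows essentially the same route as the paper: both show that removing the vertex $v$ of $K_1$ from a $k$-metric basis $S$ still leaves a $k$-metric generator, handling pairs $x,y\in V(H)$ via $v\notin\mathcal{D}_{K_1+H}(x,y)$ and pairs $x,v$ via the bound $|S\cap V(H)|\ge k+\Delta(H)$ against the worst case $|N_H(x)\cap S|\le\Delta(H)$. The paper phrases the conclusion as ``$S'$ is a basis by minimality of $S$'' rather than as an explicit contradiction, but the argument is identical.
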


\begin{proof}
Let $v$ be the vertex of $K_1$ and let $S$ be a $k$-metric basis of $K_1+H$. We will show that $S'=S-\{v\}$ is a $k$-metric generator for $K_1+H$.

On one hand, for every $x\in V(H)$ we have $|S'\cap \mathcal{D}_{K_1+H}(x,v) |=| S'\cap (V(H)-N_{H}(x)) |\ge k$, as  $|S'\cap V(H)|=|S\cap V(H)|\ge k+\Delta(H)$.

On the other hand, for any $x,y\in V(H)$ we have $|S'\cap \mathcal{D}_{K_1+H}(x,y)|=|S\cap \mathcal{D}_{K_1+H}(x,y)|\ge k$, as $v\not\in \mathcal{D}_{K_1+H}(x,y)$.

Therefore, $S'$ is a $k$-metric generator for $K_1+H$ and, by the minimality of $S$, the set $S'$ is a $k$ metric basis of $K_1+H$.
\end{proof}

By performing some simple calculations, we have observed that $\dim_2(F_{1,2})=3$, $\dim_2(F_{1,3})=4$, $\dim_2(F_{1,4})=\dim_2(F_{1,5})=4$ and $\dim_3(F_{1,4})=\dim_3(F_{1,5})=5$. The remaining values of $\dim_k(F_{1,n})$ are obtained in our next proposition.

\begin{proposition}\label{value-fans}
For any integer $n\ge 6$,
\begin{enumerate}[{\rm (i)}]
\item $\dim_2(F_{1,n})=\left\lceil(n+1)/2\right\rceil$.
\item $\dim_3(F_{1,n})= n-\left\lfloor(n-4)/5\right\rfloor$
\end{enumerate}
\end{proposition}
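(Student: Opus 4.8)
The plan is to establish matching upper and lower bounds for each of the two parameters, exploiting the structure of the distinctive sets $\mathcal{D}_{F_{1,n}}(u_i,u_j)$ in the fan graph. First I record the basic data: since $F_{1,n}=\langle u\rangle+P_n$ has diameter $2$, for any $x,y\in V(P_n)$ we have $\mathcal{D}_{F_{1,n}}(x,y)=\bigl(N_{P_n}(x)\triangledown N_{P_n}(y)\bigr)\cup\{x,y\}$, while $\mathcal{D}_{F_{1,n}}(u_i,u)=\bigl(V(P_n)-N_{P_n}(u_i)\bigr)\cup\{u\}$. In particular $\mathcal{D}_{F_{1,n}}(u_i,u_{i+1})=\{u_{i-1},u_i,u_{i+1},u_{i+2}\}$ for an interior edge, and the two ``boundary'' pairs give $\mathcal{D}_{F_{1,n}}(u_1,u_2)=\{u_1,u_2,u_3\}$ and $\mathcal{D}_{F_{1,n}}(u_{n-1},u_n)=\{u_{n-2},u_{n-1},u_n\}$, each of size $3$. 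For $k=3$ these size-$3$ sets force, via Lemma \ref{lemmaBelongKBasis}, that $\{u_1,u_2,u_3\}$ and $\{u_{n-2},u_{n-1},u_n\}$ lie inside every $3$-metric basis; more generally whenever a pair has $|\mathcal{D}_{F_{1,n}}(x,y)|=k$ the whole set must be included. I would also invoke Lemma \ref{lemma-Fan-mayor-2k} to know that any $k$-metric basis $S$ (for $k\in\{2,3\}$, $n\ge 6$) satisfies $|S\cap V(P_n)|\ge 2k$, and then use Lemma \ref{No-Belong-CenterK1H_k}: if I can show every $k$-metric basis meets $V(P_n)$ in at least $k+\Delta(P_n)=k+2$ vertices, the center $u$ never helps, so I may assume $S\subseteq V(P_n)$ and work entirely inside the path with the ``distance-$2$'' metric.

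For part (i), $k=2$: reducing to $S\subseteq V(P_n)$, the condition ``$|\mathcal{D}_{F_{1,n}}(x,y)\cap S|\ge 2$ for all $x,y$'' must be checked. The delicate pairs are consecutive vertices $u_i,u_{i+1}$, whose distinctive set is a window of four (or three, at the ends) consecutive path vertices; also pairs $u_i,u_{i+2}$ with $\mathcal{D}=\{u_{i-1},u_i,u_{i+1},u_{i+3}\}$ roughly (I will compute the exact symmetric differences), and the pair $\{u_i,u\}$ if $u$ were present. For the upper bound I would exhibit an explicit set of size $\lceil(n+1)/2\rceil$ — essentially take every other vertex of the path, i.e. $\{u_1,u_3,u_5,\dots\}$, and verify it distinguishes each pair at least twice; the endpoints $u_1$ and $u_n$ (if $n$ odd) being in $S$ handles the boundary windows. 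For the lower bound I cover the path by the four-vertex windows $\mathcal{D}_{F_{1,n}}(u_{2j-1},u_{2j})$ (disjoint for distinct $j$ once we space them by $2$), each requiring $\ge 2$ elements of $S$; a counting argument over a maximal packing of such disjoint windows, together with the boundary windows of size $3$, yields $|S|\ge\lceil(n+1)/2\rceil$. The arithmetic of when the ceiling is tight (parity of $n$) is routine but is the part that needs care.

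For part (ii), $k=3$: again reduce to $S\subseteq V(P_n)$ via Lemmas \ref{lemma-Fan-mayor-2k} and \ref{No-Belong-CenterK1H_k} (here $2k=6\ge k+2=5$, so the hypothesis of the latter holds once $|S\cap V(P_n)|\ge 6$). The formula $n-\lfloor(n-4)/5\rfloor$ suggests the complement $V(P_n)\setminus S$ can have at most $\lfloor(n-4)/5\rfloor$ vertices, so the right viewpoint is: which vertices can be \emph{omitted}? A vertex $u_i$ can be left out of $S$ only if every pair $x,y$ with $u_i\in\mathcal{D}_{F_{1,n}}(x,y)$ is still distinguished by $3$ other members of $S$; analyzing the windows shows omitted vertices must be ``far apart'' — specifically no two omitted vertices within distance $4$ along the path, and none among $u_1,u_2,u_3,u_{n-2},u_{n-1},u_n$ — which caps the number of omissions at $\lfloor(n-4)/5\rfloor$. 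For the upper bound I construct $S$ by deleting a maximum such spread-out set, e.g. omit $u_6,u_{11},u_{16},\dots$ up to index $\le n-3$, and check directly that every size-$3$, size-$4$, etc.\ distinctive set retains $\ge 3$ elements. The main obstacle, and where I expect to spend the most effort, is the lower-bound combinatorial argument in part (ii): turning the local ``can this vertex be omitted'' analysis into a clean global bound of exactly $\lfloor(n-4)/5\rfloor$ omissions, handling the interaction between the forced boundary vertices $\{u_1,u_2,u_3\}$, $\{u_{n-2},u_{n-1},u_n\}$ and the spacing constraint, and getting the residues of $n\bmod 5$ right so the floor is attained. The interior-window packing and the endpoint bookkeeping are the crux; once those are pinned down the verifications reduce to finitely many cases modulo $5$ (and small $n$, already tabulated before the proposition).
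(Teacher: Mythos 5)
Your overall strategy (exclude the centre via Lemmas \ref{lemma-Fan-mayor-2k} and \ref{No-Belong-CenterK1H_k}, exhibit the explicit generators, and count over distinctive windows for the lower bounds) is the same as the paper's, and your upper-bound constructions are essentially the ones used there (for $k=2$ note you must add $u_n$ to the odd-indexed vertices when $n$ is even, both to reach cardinality $\lceil (n+1)/2\rceil$ and to give the pair $u_{n-1},u_n$ a second distinguishing vertex). The gaps are in the two lower bounds. For (i), a packing of \emph{pairwise disjoint} four-vertex windows, even together with the two boundary triples, does not reach $\lceil(n+1)/2\rceil$: for $n=8$ you get $2+2+0=4$ but the answer is $5$, and for $n=12$ you get $2+2+2=6$ against the answer $7$. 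The paper instead proves that the middle segment $M(P_n)=V(P_n)-\{u_1,u_2,u_3,u_{n-2},u_{n-1},u_n\}$ must contain at least $\lfloor(n-6)/2\rfloor$ basis vertices, i.e.\ roughly \emph{half} of $M(P_n)$, and this requires an argument with \emph{overlapping} windows $Q_i=\{u_{4i},u_{4i+1},u_{4i+2},u_{4i+3}\}$ and a case analysis on $n-6 \bmod 4$; disjointness alone is strictly too weak.

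For (ii), you force only the six boundary vertices $u_1,u_2,u_3,u_{n-2},u_{n-1},u_n$. But $\mathcal{D}_{F_{1,n}}(u_1,u_3)=\{u_1,u_3,u_4\}$ and $\mathcal{D}_{F_{1,n}}(u_{n-2},u_n)=\{u_{n-3},u_{n-2},u_n\}$ also have cardinality $3$, so Lemma \ref{lemmaBelongKBasis} forces $u_4$ and $u_{n-3}$ into every $3$-metric basis as well. Without these two vertices your omission count caps at $\lfloor(n-2)/5\rfloor$ rather than $\lfloor(n-4)/5\rfloor$, and these differ precisely when $n\equiv 2$ or $3 \pmod 5$: for $n=7$ your argument would permit omitting $u_4$ and give $\dim_3(F_{1,7})\ge 6$, whereas the true value is $7$; for $n=8$ it would permit one omission although all eight path vertices are forced. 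The spacing constraint itself is also slightly stronger than ``omitted vertices are at distance at least $5$'': the paper shows that any two omitted indices satisfy $i-j\equiv 0\pmod 5$. With the eight forced vertices and this congruence condition the count $|\overline{S'}|\le\lfloor(n-4)/5\rfloor+1$ (the $+1$ being the centre) falls out cleanly; without them the floor cannot be attained for all residues of $n$.
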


\begin{proof}

\noindent (i) We shall prove that $A=\{u_i\in V(P_n)\,:\,i\equiv 1\;(2)\}\cup \{u_n\}$  is a $2$-metric generator for $F_{1,n}$. Let $x,y$ be two different vertices of $F_{1,n}=\langle u\rangle+P_n$.

If  $x=u$, then $d_{F_{1,n}}(x,u_i)=1$ for every $u_i\in V(P_n)$.
Since $|A|\ge 4$ and there exist at most two vertices $u_j,u_l\in V(P_n)$ such that $d_{F_{1,n}}(y,u_j)=d_{F_{1,n}}(y,u_l)=1$, we have  $|\mathcal{D}_{F_{1,n}}(u,y)\cap A|\ge 2$.

Let us now assume that $x,y\in V(P_n)$. If $x,y\in A$, then they are distinguished by themselves and, if $x,y\notin A$, then there exist at least two vertices $u_i,u_j\in A$ such that $u_i,u_j\in  N(x) \bigtriangledown N(y)\subset \mathcal{D}_{F_{1,n}}(x,y)$. Finally, if $x\in A$ and $y\notin A$, then there exists a vertex $u_l\in A-\{x\}$
 such that $u_l\in N(y) - N(x)$. Therefore, $A$ is a $2$-metric generator for $F_{1,n}$ and, as a consequence, $\dim_2(F_{1,n})\le |A|=\left\lceil(n+1)/2\right\rceil$.

It remains to show that $\dim_2(F_{1,n})\ge  \left\lceil(n+1)/2\right\rceil$.  With this aim, we take an arbitrary  $k$-metric basis $A'$  of $F_{1,n}$. Since $n\ge 6$, by Lemmas \ref{lemma-Fan-mayor-2k} and \ref{No-Belong-CenterK1H_k},  $u\not\in A'$. Notice that $\mathcal{D}_{F_{1,n}}(u_1,u_2)=\{u_1,u_2,u_3\}$ and $\mathcal{D}_{F_{1,n}}(u_{n-1},u_n)=\{u_{n-2},u_{n-1},u_n\}$. Thus, $ |A'\cap \{u_1,u_2,u_3\}| \ge 2$  and $ |A'\cap \{u_{n-2},u_{n-1},u_n\}| \ge 2$.
So, for $n=6$, then $|A'|\ge 4$ and we are done. From now on we consider $n\ge 7$.
 Let $M(P_n)=V(P_n)-\{u_1,u_2,u_3,u_{n-2},u_{n-1},u_{n}\}$. Assume for purposes of contradiction that $|A'\cap M(P_n)|\le\left\lfloor(n-6)/2\right\rfloor-1$. We consider the following cases.

\begin{enumerate}[(1)]
\item $n-6=4p$ or $n-6=4p+1$ for some positive integer $p$. Let $Q_i=\{u_{4i},u_{4i+1},u_{4i+2},u_{4i+3}\}$, $1\le i\le p$. Notice that every $Q_i\subset M(P_n)$. Since $|A'\cap M(P_n)|<\left\lfloor(n-6)/2\right\rfloor=2p$, there exists at least a set $Q_j=\{u_{4j},u_{4j+1},u_{4j+2},u_{4j+3}\}$ such that $|Q_j\cap A'|\le 1$. Since $\mathcal{D}_{F_{1,n}}(u_{4j+1},u_{4j+2})=\{u_{4j},u_{4j+1},u_{4j+2},u_{4j+3}\}$, we deduce that $u_{4j+1},u_{4j+2}$ are distinguished by at most one vertex of $A'$, which is a contradiction.
\item $n-6=4p+2$ for some positive integer $p$.  As above, let $Q_i=\{u_{4i},u_{4i+1},u_{4i+2},u_{4i+3}\}$, $1\le i\le p$. Notice that $M(P_n)=\left(\bigcup_{i=1}^{p} Q_i\right)\cup \{u_{4(p+1)},u_{4(p+1)+1}\}$. If there exists at least one $Q_i$ such that $|Q_i\cap A'|\le 1$, then we have a contradiction as in the case above. Thus, $|Q_i\cap A'|\ge 2$ for all $1\le i\le p$ and we have
$$2p=\left\lfloor(n-6)/2\right\rfloor-1\ge|A'\cap M(P_n)|=\sum_{i=1}^{p}|Q_i\cap A'|+|A'\cap \{u_{4(p+1)},u_{4(p+1)+1}\}|\ge 2p.$$
As a consequence, it follows $|Q_j\cap A'|=2$ for every $j\in \{1,...,p\}$ and $A'\cap \{u_{4(p+1)},u_{4(p+1)+1}\}=\emptyset$. Now, if $u_{4p+2},u_{4p+3}\in A'$, then $u_{4p},u_{4p+1}\notin A'$. Thus, $u_{4p+1},u_{4p+3}$ are distinguished only by $u_{4p+3}$, which is a contradiction. Conversely, if $u_{4p+2}\notin A'$ or $u_{4p+3}\notin A'$, then $|A'\cap \{u_{4p+2},u_{4p+3},u_{4(p+1)},u_{4(p+1)+1}|\le 1$ and, like in the previous case, we obtain that $u_{4p+3},u_{4(p+1)}$ are distinguished by at most one vertex, which is also a contradiction.
\item If $n-6=4p+3$, then we obtain a contradiction by proceeding analogously to Case $2$ ($n-6=4p+2$).
\end{enumerate}

Thus, $|A'\cap M(P_n)|\ge\left\lfloor(n-6)/2\right\rfloor$ and we obtain that
$\dim_2(F_{1,n})=|A'|=|A'\cap M(P_n)|+|A'\cap \mathcal{D}_{F_{1,n}}(u_1,u_2)| + |A'\cap \mathcal{D}_{F_{1,n}}(u_{n-1},u_n)|\ge \left\lfloor(n-6)/2\right\rfloor+4=\left\lceil(n+1)/2\right\rceil$. Therefore, (i) follows.
\\
\\
\noindent (ii) Let $S=V(P_n)-\{u_i\in V(P_n):\,i\equiv 0\;(5)\; \wedge \; 1\le i\le n-4\}$. Notice that $|S|=n-\left\lfloor(n-4)/5\right\rfloor$. We claim that $S$ is a $3$-metric generator for $F_{1,n}$. Let $x,y$ be two different vertices of $F_{1,n}$.

If  $x=u$,  then $d_{F_{1,n}}(x,u_i)=1$ for every $u_i\in V(P_n)$. Also, there exist at most two vertices $u_j,u_l\in V(P_n)$ such that $d_{F_{1,n}}(y,u_j)=d_{F_{1,n}}(y,u_l)=1$. Since $|S|\ge 6$ the vertices $x,y$ are distinguished by at least three vertices of $S$.

Now suppose $x,y\in V(P_n)$. According to the construction of $S$, there exist at least three different vertices $u_{i_1},u_{i_2},u_{i_3}\in S$ such that $d_{F_{1,n}}(x,u_{i_j})\ne d_{F_{1,n}}(y,u_{i_j})$, with $j\in\{1,2,3\}$. (Notice that $x$ or $y$ could be equal to some $u_{i_j}$, $j\in\{1,2,3\}$)

Thus, $S$ is a $3$-metric generator for $F_{1,n}$ and, as a result,  $\dim_3(F_{1,n})\le |S|=n-\left\lfloor(n-4)/5\right\rfloor$.

It remains to show that $\dim_3(F_{1,n})\ge n-\left\lfloor(n-4)/5\right\rfloor$.
Now, let $S'$ be a $3$-metric basis of $F_{1,n}$. Since $n\ge 6$, by Lemmas \ref{lemma-Fan-mayor-2k} and \ref{No-Belong-CenterK1H_k}, $u\not \in S'$. Also, notice that two adjacent vertices $u_{i},u_{i+1}$ are distinguished by themselves and at least one neighbor $u_{i-1}$ or $u_{i+2}$. So, at least three of them belong to $S'$. Now, if there exist three consecutive vertices $u_{i-1},u_i,u_{i+1}\in S'$ such that $u_{i-2},u_{i+2}\notin S'$, then the vertices $u_{i-1},u_{i+1}$ are not distinguished by at least three vertices of $S'$, which is a contradiction. Thus, if two vertices $u_i,u_j\notin S'$, then $i-j\equiv 0\;(5)$ and, as a consequence, per each five consecutive vertices of $V(P_n)$, at least four of them  are in $S'$, or equivalently, at most one does not belong to $S'$. Moreover, notice that $\mathcal{D}_{F_{1,n}}(u_1,u_2)=\{u_1,u_2,u_3\}$, $\mathcal{D}_{F_{1,n}}(u_1,u_3)=\{u_1,u_3,u_4\}$, $\mathcal{D}_{F_{1,n}}(u_{n-1},u_{n})=\{u_{n-2},u_{n-1},u_{n}\}$ and $\mathcal{D}_{F_{1,n}}(u_{n-2},u_{n})=\{u_{n-3},u_{n-2},u_{n}\}$. By Lemma \ref{lemmaBelongKBasis}, $\{u_1,u_2,u_3,u_4,u_{n-3},u_{n-2},u_{n-1},u_n\}\subset S'$. Hence, $|\overline{S'}|\le \left\lfloor(n-4)/5\right\rfloor+1$, where we refer to $\overline{S'}$ as the complement of the set $S'$. Finally, we have that $\dim_3(F_{1,n})=|S'|=n+1-|\overline{S'}|\ge n-\left\lfloor(n-4)/5\right\rfloor$. Therefore, $\dim_3(F_{1,n})= n-\left\lfloor(n-4)/5\right\rfloor$.
\end{proof}

The next result shows the relationship between $\dim_k(G\odot\mathcal{H})$ and $\dim_k(G\odot(K_1\diamond\mathcal{H}))$ for a family $\mathcal{H}$ of paths of order greater than five and $k\in \{1,2,3\}$. We only consider $k\in \{1,2,3\}$, since for $n'\ge 6$ we have that $\mathcal{C}(P_{n'})=\mathcal{C}(F_{1,n'})=3$, and as a consequence, by Theorem \ref{theoKmetricCorona}, $G\odot\mathcal{H}$ and $G\odot(K_1\diamond\mathcal{H})$ are $3$-metric dimensional.

\begin{proposition}\label{role-fan}
Let $G$ be a connected graph of order $n\ge 2$ and let $\mathcal{H}$ be a family of paths. If every path $P_i\in\mathcal{H}$ has order $n_i$, then the following statements hold.
\begin{enumerate}[{\rm (i)}]
\item If $n_i\ge 7$ for $i\in\{1,\ldots,n\}$, then $\dim(G\odot\mathcal{H})=\dim(G\odot\mathcal{\overline{H}})=\dim(G\odot(K_1\diamond\mathcal{H}))=\sum_{i=1}^n\left\lfloor(2n_i+2)/5\right\rfloor$.
\item If $n_i\ge 6$ for $i\in\{1,\ldots,n\}$, then $\dim_2(G\odot\mathcal{H})=\dim_2(G\odot\mathcal{\overline{H}})=\dim_2(G\odot(K_1\diamond\mathcal{H}))=\sum_{i=1}^n\left\lceil(n_i+1)/2\right\rceil$.
\item If $n_i\ge 6$ for $i\in\{1,\ldots,n\}$, then $\dim_3(G\odot\mathcal{H})=\dim_3(G\odot\mathcal{\overline{H}})=\dim_3(G\odot(K_1\diamond\mathcal{H}))=\sum_{i=1}^n\left(n_i-\left\lfloor(n_i-4)/5\right\rfloor\right)$.
\end{enumerate}
\end{proposition}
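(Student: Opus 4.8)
The plan is to separate part~(i) from parts~(ii) and~(iii), because a path $P_{n_i}$ with $n_i\ge 7$ has diameter $n_i-1\ge 6$ and thus falls under Proposition~\ref{propEqH-K1_H}, whereas $P_6$ has diameter only $5$ and must be handled directly. For part~(i): since $K_1+P_{n_i}=F_{1,n_i}$ is $3$-metric dimensional by Corollary~\ref{dimensional-fan-wheel} and $D(P_{n_i})\ge 6$, Proposition~\ref{propEqH-K1_H} gives $\dim(G\odot\mathcal{H})=\dim(G\odot\overline{\mathcal{H}})=\dim\bigl(G\odot(K_1\diamond\mathcal{H})\bigr)$. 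Each member $F_{1,n_i}$ of $K_1\diamond\mathcal{H}$ has diameter $2$, so Theorem~\ref{theoEqualCoronaDimH} yields $\dim\bigl(G\odot(K_1\diamond\mathcal{H})\bigr)=\sum_{i=1}^n\dim(F_{1,n_i})$; since $n_i\ge 7$, the known value of $\dim(F_{1,n_i})$ equals $\lfloor(2n_i+2)/5\rfloor$, and~(i) follows.

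For~(ii) and~(iii) I fix $k\in\{2,3\}$ and $n_i\ge 6$, and the core step is to prove $\dim_k(G\odot\mathcal{H})=\sum_{i=1}^n\dim_k(F_{1,n_i})$. For the lower bound, let $S$ be a $k$-metric basis of $G\odot\mathcal{H}$. The distinctive sets in $G\odot\mathcal{H}$ of the two pendant edges of $P_{n_i}$ are disjoint $3$-element subsets of $V_i$ (disjointness uses $n_i\ge 6$, exactly as in Lemma~\ref{lemma-Fan-mayor-2k}), so $|S\cap V_i|\ge 2k$. Since $\Delta(P_{n_i})=2$ and $2k\ge k+2$ for $k\ge 2$, at least $k$ vertices of $S\cap V_i$ fail to be neighbors of any prescribed $x\in V_i$, hence lie in $\mathcal{D}_{\langle v_i\rangle+P_{n_i}}(x,v_i)$; combining this with Lemma~\ref{lemmaKCorona}(i) shows, just as in the diameter-$\ge 6$ case of the proof of Theorem~\ref{theoG_K1HEqual}, that $S\cap V_i$ is a $k$-metric generator for $\langle v_i\rangle+P_{n_i}\cong F_{1,n_i}$. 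By Lemma~\ref{lemmaKCorona}(iii), $S=\bigcup_i(S\cap V_i)$, so $\dim_k(G\odot\mathcal{H})=|S|\ge\sum_i\dim_k(F_{1,n_i})$. For the reverse inequality, Lemma~\ref{lemma-Fan-mayor-2k} shows every $k$-metric basis $S'$ of $F_{1,n_i}$ satisfies $|S'\cap V(P_{n_i})|\ge 2k\ge k+\Delta(P_{n_i})$, so Lemma~\ref{No-Belong-CenterK1H_k} gives $f(P_{n_i},k)=0$, and Theorem~\ref{theoG_K1HBound} yields $\dim_k(G\odot\mathcal{H})\le\sum_i\dim_k(F_{1,n_i})$.

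Next I would pass to the two remaining corona families. Since every $F_{1,n_i}$ has diameter $2$ and is $3$-metric dimensional, Theorem~\ref{theoEqualCoronaDimH} gives $\dim_k\bigl(G\odot(K_1\diamond\mathcal{H})\bigr)=\sum_i\dim_k(F_{1,n_i})$ at once. For $\overline{\mathcal{H}}$, note that $D(\overline{P_{n_i}})\le 2$, so $\overline{P_{n_i}}$ is $\mathcal{C}(\overline{P_{n_i}})$-metric dimensional with $\mathcal{C}(\overline{P_{n_i}})=\mathcal{C}(P_{n_i})=3$, and Theorem~\ref{theoEqualCoronaDimH} gives $\dim_k(G\odot\overline{\mathcal{H}})=\sum_i\dim_k(\overline{P_{n_i}})$. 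Finally, because $\overline{P_{n_i}}$ and $F_{1,n_i}$ both have diameter $2$ one has $\mathcal{D}_{\overline{P_{n_i}}}(x,y)=N_{P_{n_i}}(x)\triangledown N_{P_{n_i}}(y)\cup\{x,y\}=\mathcal{D}_{F_{1,n_i}}(x,y)$ for all $x,y\in V(P_{n_i})$, while any $k$-metric basis of $\overline{P_{n_i}}$ again contains at least $2k\ge k+\Delta(P_{n_i})$ vertices and thus automatically meets the extra distinguishing condition imposed by the vertex of $K_1$ in $F_{1,n_i}$; hence $\dim_k(\overline{P_{n_i}})=\dim_k(F_{1,n_i})$. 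Substituting $\dim_2(F_{1,n_i})=\lceil(n_i+1)/2\rceil$ and $\dim_3(F_{1,n_i})=n_i-\lfloor(n_i-4)/5\rfloor$ from Proposition~\ref{value-fans} completes~(ii) and~(iii).

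The main obstacle is the case $n_i=6$: Theorem~\ref{theoG_K1HEqual} and Proposition~\ref{propEqH-K1_H} require diameter at least $6$ (or a long cycle), which $P_6$ misses, so the three-way equality cannot simply be quoted. The single genuinely new ingredient needed to bypass this is that paths have maximum degree $2$ while the two disjoint pendant-edge distinctive triples already force $2k\ge k+2$ basis vertices inside every $V_i$; granting that, the rest is routine bookkeeping, mainly the identification $\dim_k(\overline{P_{n_i}})=\dim_k(F_{1,n_i})$ via the diameter-$2$ coincidence of distinctive sets and the value $f(P_{n_i},k)=0$.
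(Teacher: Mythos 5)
Your proposal is correct and follows essentially the same route as the paper: part (i) is reduced to Proposition~\ref{propEqH-K1_H}, Theorem~\ref{theoEqualCoronaDimH} and the known value of $\dim(F_{1,n_i})$, while for $k\in\{2,3\}$ the key step is exactly the paper's, namely that Lemma~\ref{lemma-Fan-mayor-2k} forces $|S\cap V_i|\ge 2k\ge k+\Delta(P_{n_i})$, which lets one rerun Case~1 of the proof of Theorem~\ref{theoG_K1HEqual} to get $\dim_k(G\odot\mathcal{H})=\sum_i\dim_k(F_{1,n_i})$, with the upper bound coming from Lemma~\ref{No-Belong-CenterK1H_k} and Theorem~\ref{theoG_K1HBound}. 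If anything, your write-up is slightly more complete than the paper's, since you explicitly justify the equality $\dim_k(G\odot\overline{\mathcal{H}})=\sum_i\dim_k(F_{1,n_i})$ in the $n_i=6$ case (via $D(\overline{P_{n_i}})\le 2$ and the coincidence of distinctive sets), a point the paper's proof passes over in silence.
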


\begin{proof}
If $n_i\ge 7$, then by Theorem \ref{theoEqualCoronaDimH} and Propositions  \ref{propEqH-K1_H} and \ref{value-fans} the result follows. Hence, we only need to prove that $\dim_k(G\odot\mathcal{H})=\dim_k(G\odot(K_1\diamond\mathcal{H}))$ for the cases where $n_i=6$ and $k\in \{2,3\}$. We recall that, by Lemma \ref{lemma-Fan-mayor-2k}, for $k\in \{2,3\}$, $n'\ge 6$ and any $k$-metric basis $S$ of $F_{1,n'}$, it holds $|S\cap V(P_{n'})|\ge 2k$. Since for $k\in\{2,3\}$, we have that $|S|\ge k+2$. Thus, by a procedure analogous to  the one used in  the proof of Theorem \ref{theoG_K1HEqual}, Case $1$, we deduce that $\dim_k(G\odot\mathcal{H})=\sum_{i=1}^n\dim_k(F_{1,n_i})$. Since $F_{1,n_i}$ has diameter two, by Theorem \ref{theoEqualCoronaDimH}, $\dim_k(G\odot(K_1\diamond\mathcal{H}))= \sum_{i=1}^n\dim_k(F_{1,n_i})$. Therefore, by Proposition \ref{value-fans} the result follows.
\end{proof}

Let $V(C_n)=\{u_0,u_2,...,u_{n-1}\}$ be the vertex set of the cycle $C_n$ in $W_{1,n}=K_1+C_n$ and let $u$ be the central vertex of the wheel graph. From now on, all the operations with the subscripts of $u_i\in V(C_n)$ will be taken modulo $n$.

Since $W_{1,3}$ and $W_{1,4}$ have twin vertices, they are $2$-metric dimensional graphs. Also, by Corollary \ref{dimensional-fan-wheel} we know that the wheel graphs $W_{1,n}$, $n\ge 5$, are $4$-metric dimensional, \textit{i.e}, $\dim_k(W_{1,n})$ makes sense for $k\in \{1,2,3,4\}$. 
The case $k=1$ was previously studied in \cite{Buczkowski2003}, that is:

$$\dim_1(W_{1,n})=\left\{\begin{array}{ll}
3, & \textrm{if $n=3,6$,}\\
2, & \textrm{if $n=4,5$,}\\
{\left\lfloor{\frac{2n+2}{5}}\right\rfloor}, & \textrm{otherwise.}\\
\end{array}\right.
$$

We now study $\dim_k(W_{1,n})$ for $k\in \{2,3,4\}$.  We first give a useful lemma.

\begin{lemma}\label{Any-KGenerator-K1H_k}
Let $H$ be a non-trivial graph and let $K_1 + H$ be a $k'$-metric dimensional graph. Let $k\in \{1,\ldots,k'\}$ and $S\subseteq V(H)$. If for every $x,y\in V(H)$, $|S\cap\mathcal{D}_{K_1+H}(x,y)|\ge k$ and $|S|\ge k+\Delta(H)$, then $S$ is a $k$-metric generator for $K_1 + H$.
\end{lemma}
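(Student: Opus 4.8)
The plan is to verify directly that $S$ satisfies the defining condition of a $k$-metric generator for $K_1+H$, namely that $|S\cap \mathcal{D}_{K_1+H}(x,y)|\ge k$ holds for every pair of distinct vertices $x,y$ of $K_1+H$. Writing $v$ for the vertex of $K_1$, the crucial observation is that since $S\subseteq V(H)$ we have $v\notin S$, so the pairs of the form $\{v,y\}$ are not distinguished "for free" and must be treated separately. Accordingly I would split into two cases: both vertices lie in $V(H)$, or one of them is $v$.

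In the first case, $x,y\in V(H)$, the required inequality $|S\cap\mathcal{D}_{K_1+H}(x,y)|\ge k$ is precisely one of the hypotheses, so there is nothing to prove. In the second case, say $x=v$ and $y\in V(H)$, I would invoke the computation already carried out in the proof of Proposition \ref{kMectricDimK1H} (and used again in Proposition \ref{propK1Belong}), namely $\mathcal{D}_{K_1+H}(v,y)=(V(H)-N_H(y))\cup\{v\}$. Since $v\notin S$, this yields $S\cap\mathcal{D}_{K_1+H}(v,y)=S-N_H(y)$, whence
$$|S\cap\mathcal{D}_{K_1+H}(v,y)|=|S-N_H(y)|\ge |S|-|N_H(y)|\ge |S|-\Delta(H)\ge k,$$
where the last inequality uses the hypothesis $|S|\ge k+\Delta(H)$.

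Combining the two cases, $S$ distinguishes every pair of distinct vertices of $K_1+H$ by at least $k$ of its elements, so $S$ is a $k$-metric generator for $K_1+H$. I do not expect any genuine obstacle: the argument is essentially the counting step already used in Lemma \ref{No-Belong-CenterK1H_k}, and the only subtlety is to notice that the condition $|S|\ge k+\Delta(H)$ is needed precisely (and solely) to handle the pairs involving the central vertex, the remaining pairs being covered by the first hypothesis.
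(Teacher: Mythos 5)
Your proof is correct and follows essentially the same route as the paper's: pairs inside $V(H)$ are handled by hypothesis, and pairs involving the vertex $v$ of $K_1$ are handled via $\mathcal{D}_{K_1+H}(v,y)=(V(H)-N_H(y))\cup\{v\}$ together with the counting bound $|S|\ge k+\Delta(H)$. No discrepancies to report.
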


\begin{proof}
Let $v$ be the vertex of $K_1$. Since for every $x,y\in V(H)$ we have that $|S\cap\mathcal{D}_{K_1+H}(x,y)|\ge k$, in order to prove that $S$ is a $k$-metric generator for $K_1 + H$, it is enough proving that for every $x\in V(H)$ the condition $|\mathcal{D}_{K_1+H}(x,v)\cap S|\ge k$ is satisfied. Notice that for every $x\in V(H)$ we have that $\mathcal{D}_{K_1+H}(x,v)=\left(V(H)-N_{H}(x)\right)\cup \{v\}$. Since $|S|\ge k+\Delta(H)$, for every $x\in V(H)$ there exist $k$ vertices $y\in S\cap (V(H)-N_{H}(x))$. Thus, for every $x\in V(H)$ it holds that $|\mathcal{D}_{K_1+H}(x,v)\cap S|\ge k$. Therefore, $S$ is a $k$-metric generator for $K_1 + H$.
\end{proof}

By performing some simple calculations, we have that $\dim_2(W_{1,3})=\dim_2(W_{1,4})=\dim_2(W_{1,5})=\dim_2(W_{1,6})=4$, $\dim_3(W_{1,5})=\dim_3(W_{1,6})=5$ and $\dim_4(W_{1,5})=\dim_4(W_{1,6})=6$. Next we present a formula for the $k$-metric dimension of wheel graphs for $n\ge 7$ and $k\in \{2,3,4\}$.

\begin{proposition}\label{value-wheels}
For any $n\ge 7$,
\begin{enumerate}[{\rm (i)}]
\item $\dim_2(W_{1,n})=\left\lceil n/2\right\rceil$.
\item $\dim_3(W_{1,n})= n-\left\lfloor n/5\right\rfloor$.
\item $\dim_4(W_{1,n})=n$.
\end{enumerate}
\end{proposition}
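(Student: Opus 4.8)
\emph{Setup and preliminary reductions.} By Corollary~\ref{dimensional-fan-wheel}, $W_{1,n}=K_1+C_n$ is $4$-metric dimensional for $n\ge 7$, so $\dim_k(W_{1,n})$ is defined for $k\in\{1,2,3,4\}$. Since $C_n$ is a cycle of order at least seven, Proposition~\ref{K1-not-belong} tells us that the central vertex $u$ belongs to no $k$-metric basis of $W_{1,n}$; hence every $k$-metric basis of $W_{1,n}$ is a subset of $V(C_n)$. I would next record the distinctive sets: $u$ distinguishes no two vertices of $C_n$, so $\mathcal{D}_{W_{1,n}}(x,y)\subseteq V(C_n)$ for all $x,y\in V(C_n)$, and more precisely $\mathcal{D}_{W_{1,n}}(u_i,u_{i+1})=\{u_{i-1},u_i,u_{i+1},u_{i+2}\}$, $\mathcal{D}_{W_{1,n}}(u_i,u_{i+2})=\{u_{i-1},u_i,u_{i+2},u_{i+3}\}$, and $\mathcal{D}_{W_{1,n}}(u_i,u_j)=N_{C_n}[u_i]\cup N_{C_n}[u_j]$ (of cardinality $6$) whenever $u_i,u_j$ are at cycle-distance at least $3$; moreover $\mathcal{D}_{W_{1,n}}(u,u_i)=(V(C_n)\setminus N_{C_n}(u_i))\cup\{u\}$ has $n-1\ge 6$ elements. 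So the minimum of $|\mathcal{D}_{W_{1,n}}|$ over all pairs equals $4$, it is attained exactly at pairs of cycle vertices at cycle-distance $1$ or $2$, and no such minimal pair involves $u$.

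\emph{Upper bounds.} In each case I would exhibit $S\subseteq V(C_n)$ of the claimed cardinality and check the two hypotheses of Lemma~\ref{Any-KGenerator-K1H_k} with $H=C_n$ (recall $\Delta(C_n)=2$): that $|S|\ge k+2$ (true for $n\ge 7$ in all three cases), and that $|S\cap\mathcal{D}_{W_{1,n}}(x,y)|\ge k$ for every $x,y\in V(C_n)$ (a routine check over the three distance types above, Lemma~\ref{Any-KGenerator-K1H_k} then taking care of the pairs $(u,u_i)$ automatically). For $k=2$ I take $S$ to be an ``alternating'' set of size $\lceil n/2\rceil$ (the vertices of even index, together with $u_{n-1}$ when $n$ is odd); every four consecutive vertices of $C_n$, and likewise every set $\{u_{i-1},u_i,u_{i+2},u_{i+3}\}$, contains at least two members of $S$. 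For $k=3$, writing $n=5q+r$ with $0\le r\le 4$, I take $\overline S=\{u_0,u_5,\ldots,u_{5(q-1)}\}$, so $|\overline S|=q=\lfloor n/5\rfloor$ and $|S|=n-\lfloor n/5\rfloor$; consecutive deleted vertices are separated by an arc of at least $5$ edges, so no window of four or five consecutive vertices contains two of them, and each distinctive set above loses at most one member of $S$, keeping at least three. For $k=4$ I take $S=V(C_n)$; every pair of cycle vertices has $|\mathcal{D}_{W_{1,n}}(x,y)|\ge 4$ and $|V(C_n)|=n\ge 6$, so Lemma~\ref{Any-KGenerator-K1H_k} gives $\dim_4(W_{1,n})\le n$. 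This mirrors the treatment of fans in Proposition~\ref{value-fans}.

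\emph{Lower bounds.} Let $S$ be a $k$-metric basis of $W_{1,n}$; by the reduction above $S\subseteq V(C_n)$, hence $|\mathcal{D}_{W_{1,n}}(x,y)\cap S|\ge k$ for every pair of cycle vertices. For $k=2$: each of the $n$ cyclic windows $W_i=\{u_{i-1},u_i,u_{i+1},u_{i+2}\}$ meets $S$ in at least two vertices, and each vertex of $C_n$ lies in exactly four of them, so $4|S|=\sum_i|W_i\cap S|\ge 2n$, giving $|S|\ge\lceil n/2\rceil$. For $k=3$: I claim that consecutive vertices of $\overline S=V(C_n)\setminus S$ along the cycle are separated by an arc of at least $5$ edges. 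Indeed, if $u_a$ and $u_{a+d}$ were consecutive in $\overline S$ with $1\le d\le 4$ (so $u_{a+1},\ldots,u_{a+d-1}\in S$), then inspecting the pair $(u_a,u_{a+1})$ for $d=1$, $(u_a,u_{a+2})$ for $d=2$, $(u_{a+1},u_{a+2})$ for $d=3$, and $(u_{a+1},u_{a+3})$ for $d=4$, the corresponding distinctive set (one of the four-element sets above) meets $S$ in at most two vertices, contradicting $k=3$. Thus $\overline S$ cuts $C_n$ into $|\overline S|$ arcs of length at least $5$, whence $5|\overline S|\le n$ and $|S|=n-|\overline S|\ge n-\lfloor n/5\rfloor$. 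For $k=4$: the sets $\mathcal{D}_{W_{1,n}}(u_i,u_{i+1})$ already cover $V(C_n)$ and no other pair has $|\mathcal{D}_{W_{1,n}}|=4$, so $\mathcal{D}_4(W_{1,n})=V(C_n)$, and Corollary~\ref{corollaryBelongKBasis} forces $V(C_n)\subseteq S$, that is, $|S|\ge n$.

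\emph{Where the difficulty lies.} Once $u$ is excluded and the distinctive sets are in hand, the arguments are short; the bulk of the work is the combinatorial bookkeeping in the upper bounds --- verifying $|S\cap\mathcal{D}_{W_{1,n}}(x,y)|\ge k$ uniformly along the cycle, in particular across the ``seam'' where the alternating (for $k=2$) or period-five (for $k=3$) pattern wraps around, and for the first values $n=7,8,\ldots$ --- together with confirming that the packing inequalities $4|S|\ge 2n$ and $5|\overline S|\le n$ are tight for every residue of $n$ modulo $2$ and modulo $5$ respectively.
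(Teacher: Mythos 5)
Your proposal is correct and follows essentially the same route as the paper: exclude the central vertex via Proposition~\ref{K1-not-belong}, exhibit the same generating sets (alternating for $k=2$, period-five deletions for $k=3$, all of $V(C_n)$ for $k=4$) certified by Lemma~\ref{Any-KGenerator-K1H_k}, and derive the lower bounds from the four-element distinctive sets of close pairs on the cycle. Your lower-bound bookkeeping (the double count $4|S|\ge 2n$ for $k=2$, the arcs-of-length-at-least-five argument for $k=3$, and invoking $\mathcal{D}_4(W_{1,n})=V(C_n)$ with Corollary~\ref{corollaryBelongKBasis} for $k=4$) is a slightly cleaner packaging of the same pigeonhole reasoning the paper carries out case by case.
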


\begin{proof}
Since $n\ge 7$, by Proposition \ref{K1-not-belong}, the central vertex of $W_{1,n}$ does not belong to any $k$-metric basis of $W_{1,n}$. Thus, any $k$-metric basis of $W_{1,n}$ is a subset of $V(C_n)$. Let $S_{k}\subset V(C_n)$, $k\in \{2,3,4\}$, be a set of vertices of $W_{1,n}$ such that $|S_{2}|<\left\lceil n/2\right\rceil$, $|S_{3}|<n-\left\lfloor n/5\right\rfloor$ and $|S_{4}|<n$. We claim that $S_{k}$ is not a $k$-metric generator for $W_{1,n}$ with $k\in \{2,3,4\}$. Consider each $S_{k}$ independently:
\\
\\
\noindent $k=2$. Since $|S_{2}|<\left\lceil n/2\right\rceil$, there exist four consecutive vertices $u_i,u_{i+1},u_{i+2},u_{i+3}$ such that at most one of them belongs to $S_{2}$. Thus, $|\mathcal{D}_{W_{1,n}}(u_{i+1},u_{i+2})\cap S_2|\le 1$.
\\
\\
\noindent  $k=3$. Since $|S_{3}|<n-\left\lfloor n/5\right\rfloor$, there exist five consecutive vertices $u_i,u_{i+1},u_{i+2},u_{i+3},u_{i+4}$ such that at most three of them belong to $S_{3}$. Thus, there exist four consecutive vertices $u_j,u_{j+1},u_{j+2},u_{j+3}\in \{u_i,u_{i+1},u_{i+2},u_{i+3},u_{i+4}\}$ such that at most two of them belong to $S_{3}$, with the exception of two cases. Hence, $|\mathcal{D}_{W_{1,n}}(u_{j+1},u_{j+2})\cap S_3|\le 2$. The two exceptional cases are when either $u_{i+1},u_{i+2},u_{i+3}\in S_{3}$ or $u_i,u_{i+2},u_{i+4}\in S_{3}$. In both cases, $|\mathcal{D}_{W_{1,n}}(u_{i+1},u_{i+3})\cap S_3|=2$.
\\
\\
\noindent $k=4$. Since $|S_{4}|<n$, there exist four consecutive vertices $u_i,u_{i+1},u_{i+2},u_{i+3}$ such that at most three of them belong to $S_{4}$. Thus, $|\mathcal{D}_{W_{1,n}}(u_{i+1},u_{i+2})\cap S_4|\le 3$.

Therefore, as we claimed, $S_{k}$ is not a $k$-metric generator for $W_{1,n}$, with $k\in \{2,3,4\}$ and so $\dim_2(W_{1,n})\ge\left\lceil n/2\right\rceil$, $\dim_3(W_{1,n})\ge n-\left\lfloor n/5\right\rfloor$ and $\dim_4(W_{1,n})\ge n$.

Since $n\ge 7$, by Proposition \ref{K1-not-belong}, the central vertex of $W_{1,n}$ does not belong to any $k$-metric basis of $W_{1,n}$. Thus, $V(C_n)$ is a $4$-metric generator for $W_{1,n}$ and, as a result, $\dim_4(W_{1,n})= n$. It remains to show that $\dim_2(W_{1,n})\le\left\lceil n/2\right\rceil$ and $\dim_3(W_{1,n})\le n-\left\lfloor n/5\right\rfloor$.
With this aim,  let $A_k\subset V(C_n)$,  $k\in \{2,3\}$, be a set of vertices such that $u_i$ belongs to $A_2$ or $A_3$  if and only if $i$ is odd or $i\not\equiv 0\;(5)$.  Notice that $|A_2|=\left\lceil n/2\right\rceil$ and  $|A_3|=n-\left\lfloor n/5\right\rfloor$. We will show that for every $u_i,u_j\in V(C_n)$, $i\ne j$, it hold $|\mathcal{D}_{W_{1,n}}(u_i,u_j)\cap A_k|\ge k$ and then, by Lemmas \ref{lemmaW1n-K_Plus_2} and \ref{Any-KGenerator-K1H_k}, we will have that $A_k$ is a $k$-metric generator for $W_{1,n}$. Consider each $A_k$ separately:
\\
\\
\noindent $k=2$. If $u_i,u_j\in A_2$, then the result is straightforward. If $u_i\in A_2$ and $u_j\not\in A_2$, then $\{u_i,u_k\}\subseteq A_2\cap\mathcal{D}_{W_{1,n}}(u_i,u_j)$, for some $u_k\in N(u_j)-N[u_i]$. Also, if $u_i,u_j\not\in A_2$, then $\{u_k,u_l\}\subseteq A_2\cap\mathcal{D}_{W_{1,n}}(u_i,u_j)$, where $u_k,u_l\in N(u_i)\triangledown N(u_j)$.
\\
\\
\noindent  $k=3$. If $u_i,u_j\in A_3$, then $\{u_i,u_j,u_k\}\subseteq A_3\cap\mathcal{D}_{W_{1,n}}(u_i,u_j)$, where $u_k\in A_3\cap(N[u_i]\triangledown N[u_j])$. If $u_i\in A_3$ and $u_j\not\in A_3$, then $\{u_i,u_k,u_l\}\subseteq A_3\cap\mathcal{D}_{W_{1,n}}(u_i,u_j)$, where $u_k,u_l\in A_3\cap(N[u_j]\triangledown N[u_i])$. Finally, if $u_i,u_j\not\in A_3$, then $\{u_k,u_l,u_m\}\subseteq A_3\cap\mathcal{D}_{W_{1,n}}(u_i,u_j)$, where $u_k,u_l,u_m\in N(u_i)\cup N(u_j)$.

Therefore, $A_k$ is a $k$-metric generator for $W_{1,n}$, with $k\in \{2,3\}$ and, as a consequence, the result follows.
\end{proof}

Finally, we present the relationship between $\dim_k(G\odot\mathcal{H})$ and $\dim_k(G\odot(K_1\diamond\mathcal{H}))$ for a family $\mathcal{H}$ of cycles of order greater than six and $k\in \{1,2,3,4\}$. We only consider $k\in \{1,2,3,4\}$, since for $n'\ge 7$ we have that $\mathcal{C}(C_{n'})=\mathcal{C}(W_{1,n'})=4$, and as a consequence, by Corollary \ref{coroKmetricCorona}, $G\odot\mathcal{H}$ and $G\odot(K_1\diamond\mathcal{H})$ are $4$-metric dimensional. Thus, by Theorem \ref{theoEqualCoronaDimH} and Propositions \ref{propEqH-K1_H} and \ref{value-wheels}, we obtain the following result.

\begin{proposition}
Let $G$ be a connected graph of order $n\ge 2$ and let $\mathcal{H}$ be a family of $n$ cycles. If every cycle $C_i\in\mathcal{H}$ has order $n_i\ge 7$, then
\begin{enumerate}[{\rm (i)}]
\item  $\dim(G\odot\mathcal{H})=\dim(G\odot\mathcal{\overline{H}})=\dim(G\odot(K_1\diamond\mathcal{H}))=\sum_{i=1}^n\left\lfloor(2n_i +2)/5\right\rfloor $.
\item $\dim_2(G\odot\mathcal{H})=\dim_2(G\odot\mathcal{\overline{H}})=\dim_2(G\odot(K_1\diamond\mathcal{H}))=\sum_{i=1}^n\left\lceil n_i/2\right\rceil$.
\item $\dim_3(G\odot\mathcal{H})=\dim_3(G\odot\mathcal{\overline{H}})=\dim_3(G\odot(K_1\diamond\mathcal{H}))=\sum_{i=1}^n\left(n_i-\left\lfloor n_i/5\right\rfloor\right)$.
\item $\dim_4(G\odot\mathcal{H})=\dim_4(G\odot\mathcal{\overline{H}})=\dim_4(G\odot(K_1\diamond\mathcal{H}))=\sum_{i=1}^nn_i$.
\end{enumerate}
\end{proposition}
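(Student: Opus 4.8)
The plan is to obtain this statement as a direct corollary of Corollaries~\ref{cuello-ge5} and~\ref{dimensional-fan-wheel}, Propositions~\ref{propEqH-K1_H} and~\ref{value-wheels}, and Theorem~\ref{theoEqualCoronaDimH}, so the work is essentially verifying that all hypotheses of these results are met for a family of cycles of order at least seven. First I would record the dimensional facts. Each $C_i\in\mathcal{H}$ is $2$-regular with girth $g(C_i)=n_i\ge 7\ge 5$, so Corollary~\ref{cuello-ge5} gives that $G\odot\mathcal{H}$ is $(2\cdot 2)$-metric dimensional, i.e.\ $\mathcal{C}(\mathcal{H})=4$ by Theorem~\ref{theoKmetricCorona}. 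Likewise, by Corollary~\ref{dimensional-fan-wheel} each wheel $W_{1,n_i}=K_1+C_i$ is $4$-metric dimensional, so $K_1+C_i$ is $k_i$-metric dimensional with $k_i=4$ for every $i$. Hence $\dim_k(W_{1,n_i})$ is defined precisely for $k\in\{1,2,3,4\}$, which is exactly the range appearing in the four items of the proposition.

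Next I would invoke Proposition~\ref{propEqH-K1_H} with this family: its hypotheses hold, since every member of $\mathcal{H}$ is a cycle of order at least seven and $K_1+C_i$ is $k_i$-metric dimensional. This yields, for each $k\in\{1,\ldots,\mathcal{C}(\mathcal{H})\}=\{1,2,3,4\}$,
$$\dim_k(G\odot\mathcal{H})=\dim_k(G\odot\overline{\mathcal{H}})=\dim_k\bigl(G\odot(K_1\diamond\mathcal{H})\bigr),$$
which settles the first two equalities in each item. For the explicit value I would note that $K_1\diamond\mathcal{H}=\{W_{1,n_1},\ldots,W_{1,n_n}\}$ and that each wheel $W_{1,n_i}$ has diameter $D(W_{1,n_i})=2$; applying Theorem~\ref{theoEqualCoronaDimH} to the family $K_1\diamond\mathcal{H}$ (whose members are all $4$-metric dimensional, with minimum equal to $4$) gives $\dim_k\bigl(G\odot(K_1\diamond\mathcal{H})\bigr)=\sum_{i=1}^n\dim_k(W_{1,n_i})$ for $k\in\{1,2,3,4\}$.

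Finally I would substitute the closed formulae for $\dim_k(W_{1,n_i})$ valid when $n_i\ge 7$: the case $k=1$ contributes $\left\lfloor(2n_i+2)/5\right\rfloor$ by the result of \cite{Buczkowski2003}, and the cases $k=2,3,4$ contribute $\left\lceil n_i/2\right\rceil$, $n_i-\left\lfloor n_i/5\right\rfloor$, and $n_i$ respectively, by Proposition~\ref{value-wheels}; summing over $i$ produces the four displayed identities. There is no serious obstacle here beyond this bookkeeping. The only points that genuinely need care are the identity $\mathcal{C}(\mathcal{H})=4$, which guarantees that the range $k\in\{1,2,3,4\}$ in the statement coincides exactly with the range over which Proposition~\ref{propEqH-K1_H} and the formulae of Proposition~\ref{value-wheels} are available, and the observation that every graph in $K_1\diamond\mathcal{H}$ has diameter two, which is precisely what licenses the use of Theorem~\ref{theoEqualCoronaDimH}.
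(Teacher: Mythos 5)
Your proposal is correct and follows essentially the same route as the paper: the paper likewise observes that $\mathcal{C}(C_{n'})=\mathcal{C}(W_{1,n'})=4$ for $n'\ge 7$ and then deduces the result directly from Theorem~\ref{theoEqualCoronaDimH} together with Propositions~\ref{propEqH-K1_H} and~\ref{value-wheels} (plus the known formula for $\dim_1(W_{1,n})$). Your extra detail—deriving $\mathcal{C}(\mathcal{H})=4$ via Corollary~\ref{cuello-ge5} and checking that every wheel has diameter two—is just a more explicit verification of the same hypotheses.
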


\end{document}